\title[On the largest Kronecker and LR--coefficients]
{On the largest Kronecker and \\ Littlewood--Richardson
 coefficients }
\author[Igor Pak, Greta Panova and Damir Yeliussizov]{
Igor Pak$^\star$, \ \  Greta Panova$^\dagger$ \ \ and \ \ Damir Yeliussizov$^\star$}
\thanks{\ \today}
\thanks{\thinspace ${\hspace{-.45ex}}^\star$Department of Mathematics,
UCLA, Los Angeles, CA~90095.
\hskip.06cm
Email:
\hskip.06cm
\texttt{\{pak,\ts damir\}@math.ucla.edu}}
\thanks{{\thinspace ${\hspace{-.45ex}}^\dagger$Department of Mathematics,
 UPenn, Philadelphia, PA~19104, and IAS, Princeton, NJ~08540}}
\thanks{\ Email: \hskip.06cm
\texttt{panova@math.upenn.edu}}
\newcommand{\VK}{\psi}
\newcommand{\SYT}{\operatorname{SYT}}
\newcommand{\Conj}{\operatorname{Conj } }
\DeclareMathOperator{\area}{area}
\newtheorem{thm}{Theorem}[section]
\newtheorem{lemma}[thm]{Lemma}
\newtheorem{cor}[thm]{Corollary}
\newtheorem{prop}[thm]{Proposition}
\newtheorem{conj}[thm]{Conjecture}
\newtheorem{rem}[thm]{Remark}
\newtheorem{Def}[thm]{Definition}
\numberwithin{equation}{section} 
\def\wh{\widehat}
\def\sq{\square}
\def\nn{\mathbb N}
\def\bss{\mathbb S}
\def\cc{\mathbb C}
\def\rr{\mathbb R}
\def\Ga{\Gamma}
\def\la{\lambda}
\def\de{\delta}
\def\al{\alpha}
\def\be{\beta}
\def\om{\omega}
\def\ka{\rho}
\def\ve{\varepsilon}
\def\vp{\varphi}
\def\cC{\mathcal C}
\def\cP{\mathcal P}
\def\ssu{\subset}
\def\sss{\supset}
\def\wt{\widetilde}
\def\<{\langle}
\def\>{\rangle}
\def\GL{ {\text {\rm GL} } }
\def\Ups{\Upsilon}
\def\rK{\text{{\rm \textbf{K}}}}
\def\rLR{\text{{\rm \textbf{C}}}}
\def\rA{\text{{\rm \textbf{A}}}}
\def\rD{\text{{\rm \textbf{D}}}}
\def\rC{\text{{\rm \textbf{C}}}}
\def\rF{\text{{\rm \textsf{F}}}}
\def\0{{\mathbf 0}}
\def\hba{\hslash}
\def\.{\hskip.06cm}
\def\ts{\hskip.03cm}
\def\nin{\noindent}
\def\co{c_1}
\def\ct{c_2}
\def\WI{\text{\small {\rm (\textit{i})}}}
\def\WII{\text{\small {\rm (\textit{ii})}}}
\def\WIII{\text{\small {\rm (\textit{iii})}}}
\begin{document}

\begin{abstract}
We give new bounds and asymptotic estimates for
Kronecker and Littlewood--Richardson coefficients.
Notably, we resolve Stanley's questions on the
shape of partitions attaining the \emph{largest} Kronecker
and Littlewood--Richardson coefficients. We apply the
results to asymptotics of the number of standard Young tableaux
of skew shapes.
\end{abstract}


\maketitle

\section{Introduction} \label{sec:intro}

What is largest dimension $\rD(n)$ of an irreducible representation
of~$S_n$?  Which partitions correspond to the largest representations?
These questions are both classical and surprisingly challenging.
They have been heavily studied in both combinatorics and probability,
in connection to the \emph{longest increasing subsequences}.
We refer to~\cite{AD,BDJ,Rom} for the history of this problem.

\smallskip

In algebraic combinatorics and related fields, the \emph{Kronecker}
and \emph{Littlewood--Richardson} $($LR$-)$ \emph{coefficients}
play a crucial role.  They are the structure constants in the
ring of characters of~$S_n$ and $\GL_N(\cc)$, respectively:
$$\chi^\mu \cdot \chi^\nu \, = \, \sum_\nu \. g(\la,\mu,\nu) \. \chi^\la
\quad \ \text{and} \ \quad s_\mu \cdot s_\nu \, = \, \sum_\la \. c^\la_{\mu,\nu} \ts s_\la\ts.
$$
These coefficients have been intensely studied from combinatorial
(see e.g.~\cite{EC2,vL}), geometric (see e.g.~\cite{Ful}), probabilistic
(see~\cite{Bia}), and computational point of view (see e.g.~\cite{Bur,BI}).
Yet, relatively little is known about the asymptotics of these
coefficients (see~$\S$\ref{ss:finrem-hist}).

In recent years, Stanley computed asymptotic of the \emph{largest}
Kronecker and LR--coefficients:

\begin{thm}[Stanley~\cite{EC2-supp,Stanley-kron}] \label{t:stanley-asy}
We have:
$$\aligned
(\ast) \qquad \quad \ts\  \max_{\la\vdash n} \. \max_{\mu\vdash n} \. \max_{\nu\vdash n}
\,\, g(\la,\mu,\nu) \, & = \, \sqrt{n!} \,\. e^{-O(\sqrt{n})}\ts, \\ 
(\ast\ast) \qquad \, \max_{0\le k\le n} \. \max_{\la\vdash n} \. \max_{\mu\vdash k} \.
\max_{\nu\vdash n-k}  \,\, c^\la_{\mu,\nu} \, & = \, 2^{n/2 \ts \ts - \ts O(\sqrt{n})}\ts.
\endaligned
$$
\end{thm}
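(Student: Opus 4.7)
The plan is to establish matching upper and lower bounds for both $(\ast)$ and $(\ast\ast)$. The upper bounds will come from evaluating character identities at the identity element of $S_n$, and the lower bounds from a pigeonhole argument using $p(n) = e^{O(\sqrt{n})}$ applied to a fixed partition $\sigma \vdash n$ with $f^\sigma \ge \sqrt{n!}\. e^{-O(\sqrt{n})}$. Such $\sigma$ exists: for $n = \binom{k}{2}$ the staircase $\sigma = (k-1, k-2, \ldots, 1)$ has $f^\sigma$ of this order by the hook-length formula and Stirling, and for intermediate $n$ one pads by a short row.

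For the Kronecker bound $(\ast)$, evaluating $\chi^\mu \chi^\nu = \sum_\la g(\la,\mu,\nu) \chi^\la$ at the identity yields $f^\mu f^\nu = \sum_\la g(\la,\mu,\nu) f^\la$, and hence $g(\la,\mu,\nu) \. f^\la \le f^\mu f^\nu$. The $S_3$-symmetry of $g$ gives the analogous inequalities with $(\la,\mu,\nu)$ cyclically permuted; multiplying the three yields
$$g(\la,\mu,\nu)^3 \, \le \, f^\la f^\mu f^\nu \, \le \, (n!)^{3/2},$$
so $g(\la,\mu,\nu) \le \sqrt{n!}$. For the lower bound, fix $\sigma$ as above. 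Then $\sum_\la g(\la,\sigma,\sigma) f^\la = (f^\sigma)^2 \ge n!\. e^{-O(\sqrt{n})}$, and since the sum has at most $p(n)$ terms with each $f^\la \le \sqrt{n!}$, pigeonhole yields $\la$ with $g(\la,\sigma,\sigma) \ge \sqrt{n!}\. e^{-O(\sqrt{n})}$.

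For the LR bound $(\ast\ast)$, induction and restriction for $S_k \times S_{n-k} \subset S_n$ give the two dual identities
$$\binom{n}{k} \. f^\mu f^\nu \, = \, \sum_\la c^\la_{\mu,\nu} \. f^\la \qquad \text{and} \qquad f^\la \, = \sum_{\mu \vdash k,\. \nu \vdash n-k} c^\la_{\mu,\nu} \. f^\mu f^\nu.$$
The first implies $c^\la_{\mu,\nu} f^\la \le \binom{n}{k} f^\mu f^\nu$ and the second implies $c^\la_{\mu,\nu} f^\mu f^\nu \le f^\la$; multiplying gives $(c^\la_{\mu,\nu})^2 \le \binom{n}{k} \le 2^n$, so $c^\la_{\mu,\nu} \le 2^{n/2}$. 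For the matching lower bound take $k = \lfloor n/2\rfloor$ and $\la$ with $f^\la \ge \sqrt{n!}\. e^{-O(\sqrt{n})}$. The restriction identity has at most $p(k) p(n-k) = e^{O(\sqrt{n})}$ summands, and $f^\mu f^\nu \le \sqrt{k!(n-k)!} = \sqrt{n!/\binom{n}{k}}$, so some $(\mu,\nu)$ satisfies
$$c^\la_{\mu,\nu} \, \ge \, \frac{f^\la \. \sqrt{\binom{n}{k}}}{p(k) \. p(n-k) \. \sqrt{n!}} \, \ge \, 2^{n/2} \. e^{-O(\sqrt{n})}.$$

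The main obstacle is guaranteeing $\sigma \vdash n$ with $f^\sigma \ge \sqrt{n!} \. e^{-O(\sqrt{n})}$ uniformly in $n$, since both lower bounds rest on this input. The staircase together with a padding argument, or equivalently the Vershik--Kerov limit shape and Plancherel concentration, supplies it; everything else is an arithmetic manipulation of the two pairs of dual character identities together with the coarse estimate $p(n) = e^{O(\sqrt{n})}$.
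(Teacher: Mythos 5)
Your strategy is essentially the machinery the paper itself uses around Stanley's theorem (which it only cites): the upper bounds come from the dual character identities exactly as you write them --- your multiplicative trick gives $g(\la,\mu,\nu)^3\le f^\la f^\mu f^\nu\le (n!)^{3/2}$, equivalent to \eqref{eq:Kron-upper-groups} plus the symmetry \eqref{eq:Kron-sym-groups}, and $\bigl(c^\la_{\mu,\nu}\bigr)^2\le\binom{n}{k}$ is the paper's inequality \eqref{eq:LR-minor}, cf.\ Lemma~\ref{l:LR-squared-groups-2} --- while your lower bounds are the same pigeonhole over $e^{O(\sqrt n)}$ partitions applied to a high-dimensional irreducible that drives Proposition~\ref{p:Kon-max-groups}, Theorem~\ref{t:Kron-main-groups} and Theorem~\ref{t:LR-groups-exist}. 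So the arithmetic core of your argument is correct and matches the paper's route.

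The one genuine error is your proposed source of the key input $\sigma\vdash n$ with $f^\sigma\ge\sqrt{n!}\,e^{-O(\sqrt n)}$: the staircase $\delta_k=(k-1,k-2,\dots,1)$ does \emph{not} have this property. Its hooks are $2(k-i-j)+1$, and a direct computation with the hook-length formula \eqref{eq:HLF} and Stirling gives $f^{\delta_k}=\sqrt{n!}\,e^{-cn(1+o(1))}$ with $c=\tfrac32\log 2-1\approx 0.04>0$; conceptually, the rescaled staircase is a triangle rather than the VKLS curve, so by the proposition in Section~\ref{ss:VKLS} its dimension is exponentially (in $n$, not $\sqrt n$) below $\sqrt{n!}$, and padding by a short row cannot repair an $e^{-\Theta(n)}$ deficit. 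The two routes you call ``equivalent'' are not: only the Vershik--Kerov one works. Fortunately the needed input is cheaper than either: Burnside's identity \eqref{eq:Burnside} together with $p(n)=e^{O(\sqrt n)}$ already yields some $\sigma$ with $(f^\sigma)^2\ge n!/p(n)$, i.e.\ $f^\sigma\ge\sqrt{n!}\,e^{-O(\sqrt n)}$ --- this is exactly how the lower bound in \eqref{eq:VK} is obtained. With that substitution (and the trivial remark $\binom{n}{\lfloor n/2\rfloor}\ge 2^n/(n+1)$ for the LR case) your proof is complete.
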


Having obtained these interesting asymptotics, Stanley asked
in~\cite{EC2-supp,Stanley-kron}:
\ts {\bf {\em What partitions attain these maxima}?}  \ts
His original proof used certain summation identities which gave no
indication on how to answer either question.\footnote{R.~P.~Stanley,
personal communication (2017).}

In this paper we largely resolve questions for both~$(\ast)$ and~$(\ast\ast)$,
and the answers turn out to be the \emph{Vershik--Kerov--Logan--Shepp}
(VKLS) \emph{shapes}.  These are the shapes that attain the largest
dimension~$\rD(n)$ mentioned above.  The precise formulations of both
results is somewhat technical (see next section), with notable differences
between two cases.  Contrary to the many other questions in the area,
the Kronecker coefficients are actually easier to understand than
the LR--coefficients, for reasons that will become apparent only
much later in the paper (see $\S$\ref{ss:finrem-groups}).

Somewhat surprisingly, and in contrast with the technical proofs
in the earlier results, we are able to resolve both questions by
using basic character estimates and utilizing various existing
technical results in the area.
%

Throughout the paper, we worked hard to simplify and streamline the proofs,
and remove $S_n$--specific tools to make sure that our bounds can be
extended to general finite groups.  In the companion paper~\cite{PPY},
we present slightly weaker but much more general bounds for
\emph{induced multiplicities}, which extend both the Kronecker
and the LR--coefficients (cf.\ $\S$\ref{ss:finrem-groups}).

\smallskip

\subsection{Kronecker coefficients}
Recall that the largest dimension \ts $\rD(n) := \max_{\la \vdash n} \ts f^\la$ \ts
satisfies
$$
\sqrt{n!} \, e^{-c_1\ts \sqrt{n}\ts (1+o(1))} \. \le \. \rD(n) \. \le \.  \sqrt{n!} \, e^{-c_2\ts \sqrt{n}\ts (1+o(1))}
$$
for some $c_1>c_2>0$~\cite{VK2}.  This motivates the following definition.

\smallskip

\begin{Def}
A sequence of partitions \ts $\{\la^{(n)}\vdash n\}$ \ts is called
\emph{Plancherel} if for some \ts $a>0$, we have:
$$
f^{\la^{(n)}} \, \ge \, \sqrt{n!} \, e^{-a \ts \sqrt{n}}
\quad \text{for all} \ \, n\ts.
$$
\end{Def}

\smallskip

\begin{thm}[{\cite{VK} and~\cite{LS}}] \label{t:VKLS}
Every Plancherel partition sequence 
has {\rm VKLS} shape.
\end{thm}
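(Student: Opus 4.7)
My plan is to follow the classical variational approach of Vershik--Kerov and Logan--Shepp. To each partition $\la \vdash n$ I associate its \emph{continuous diagram} $\omega_\la \colon \rr \to \rr$, obtained by drawing $\la$ in the Russian orientation and rescaling all coordinates by $1/\sqrt{n}$. Each $\omega_\la$ is $1$-Lipschitz, equals $|x|$ outside a bounded interval, and bounds area $2$ with the graph of $|x|$. Let $\mathcal{D}$ denote this space of normalized continuous diagrams, topologized by uniform convergence on compact sets. The VKLS shape $\Omega \in \mathcal{D}$ is the classical limit shape, given on $[-2,2]$ by $\Omega(x) = \tfrac{2}{\pi}\bigl(x\arcsin(x/2)+\sqrt{4-x^2}\bigr)$ and equal to $|x|$ outside.

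The heart of the argument is an asymptotic form of the hook--length formula. Starting from $f^\la = n!/\prod_{u \in \la} h(u)$ and applying Stirling, one derives
$$ \frac{1}{\sqrt{n}} \log \frac{f^\la}{\sqrt{n!}} \ = \ -\, H(\omega_\la) \, + \, o(1), $$
where $H : \mathcal{D} \to [0,\infty]$ is a Coulomb-type \emph{hook energy functional} in the density $\tfrac{1-\omega'}{2}$. The second step is the variational principle: $H$ is lower semicontinuous, coercive (sublevel sets are compact in $\mathcal{D}$), and, by strict positive--definiteness of the logarithmic kernel on signed measures of zero total mass, attains its minimum at a unique shape, namely $\Omega$.

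With these two ingredients the conclusion is short. The Plancherel hypothesis $f^{\la^{(n)}} \ge \sqrt{n!}\, e^{-a\sqrt{n}}$ translates via Step~1 into $H(\omega_{\la^{(n)}}) \le a + o(1)$; by coercivity the sequence $\{\omega_{\la^{(n)}}\}$ is precompact in $\mathcal{D}$, and by lower semicontinuity every subsequential limit lies in the sublevel set $\{H \le a\}$. Since these sublevel sets collapse to $\{\Omega\}$ as the level descends to the VKLS constant, any such limit clusters at $\Omega$, which is the precise content of ``has VKLS shape''. The main obstacle I anticipate is Step~1: passing from the discrete sum $\sum_u \log h(u)$ to the integral defining $H$ with uniform error $o(\sqrt{n})$ requires a delicate regularization of the hook contributions near the boundary of $\la$, where small hooks must be estimated separately from the bulk. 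This is where the bulk of the analytic work of \cite{VK,LS} lies; the variational piece and the final deduction are then standard compactness and convex analysis on $\mathcal{D}$.
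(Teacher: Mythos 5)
The paper offers no proof of this statement: it is the classical Vershik--Kerov--Logan--Shepp theorem, quoted from \cite{VK}, \cite{LS}, with \cite{Rom} cited for a detailed proof. Your outline follows the right classical strategy, but Step~1 contains a scaling error that invalidates the final deduction. The hook-length formula together with the hook integral of $\S$\ref{ss:def-limit} gives
\begin{equation*}
\frac{1}{n}\,\log\frac{f^{\la}}{\sqrt{n!}} \;=\; \Ups(\omega_{\la})\,-\,\frac12\,+\,o(1),
\end{equation*}
so the correct normalization of the energy is $1/n$, not $1/\sqrt{n}$: for a fixed non-VKLS stable shape one has $\log\bigl(f^{\la}/\sqrt{n!}\bigr)\asymp -c\,n$ with $c>0$, so an $n$-independent functional $H$ on $\mathcal{D}$ satisfying your Step~1 with the $1/\sqrt{n}$ normalization cannot exist (it would have to be $+\infty$ off the minimizer). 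With your normalization the conclusion is a non sequitur: the Plancherel hypothesis only places subsequential limits in the \emph{fixed} sublevel set $\{H\le a\}$, which is a genuine neighborhood of $\Omega$; the level $a$ never descends, so nothing forces the limit to be $\Omega$. With the correct $1/n$ normalization the hypothesis $f^{\la^{(n)}}\ge\sqrt{n!}\,e^{-a\sqrt{n}}$ yields $H(\omega_{\la^{(n)}})\le a/\sqrt{n}+o(1)\to 0$, and then uniqueness of the minimizer plus compactness and lower semicontinuity does give $\omega_{\la^{(n)}}\to\Omega$.

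Even after this repair, two gaps remain. First, the soft argument delivers only qualitative convergence, whereas the definition of VKLS shape in $\S$\ref{ss:VKLS} (as used, e.g., in the proof of Theorem~\ref{t:skew-Plancherel}) is quantitative: the boundary of $\la^{(n)}$ must stay within $O(n^{1/6})$ of $\sqrt{n}\,\VK$. Extracting such a rate requires the stability estimate at the heart of \cite{VK}: the strict positive-definiteness of the logarithmic-energy quadratic form bounds a norm of $\omega_{\la}-\Omega$ by $H(\omega_{\la})-H(\Omega)\le a/\sqrt{n}$, which is then converted into a uniform bound on the boundary. Second, your Step~1 needs error control compatible with that rate (errors of order $o(\sqrt{n})$ in $\log f^{\la}$, not merely $o(n)$), which is exactly the delicate boundary/small-hook analysis you defer. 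These two quantitative ingredients constitute the actual content of the theorem and are absent from the proposal.
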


\smallskip

\nin
See~$\S$\ref{ss:VKLS} for precise definitions of VKLS shapes.
We write \ts $f^{\la^{(n)}} = \sqrt{n!} \, e^{-O(\sqrt{n})}$ \ts when
we do not want to specify the constant implied by the $O(\cdot)$ notation.

\smallskip

\begin{thm} \label{t:kron-intro}
Let $\{\la^{(n)}\vdash n\}$, $\{\mu^{(n)}\vdash n\}$, $\{\nu^{(n)}\vdash n\}$ be
three partitions sequences, such that
\begin{equation}
\label{eq:Kron-intro}
g\bigl(\ts \la^{(n)},\ts\mu^{(n)},\ts\nu^{(n)}\ts\bigr) \, = \, \sqrt{n!} \, e^{-O(\sqrt{n})}\..
\end{equation}
Then all three partition sequences are Plancherel.  Conversely, for every
two Plancherel partition sequences \ts $\{\la^{(n)}\vdash n\}$ \ts and \ts
$\{\mu^{(n)}\vdash n\}$, there exists a Plancherel partition sequence \ts
$\{\nu^{(n)}\vdash n\}$, s.t.~\eqref{eq:Kron-intro} holds.
\end{thm}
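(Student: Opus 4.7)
The plan rests on two ingredients: the symmetric upper bound
\[
g(\la,\mu,\nu) \,\le\, \min\bigl(f^\la,\, f^\mu,\, f^\nu\bigr),
\]
and the dimension identity $\sum_\nu g(\la,\mu,\nu)\, f^\nu = f^\la f^\mu$, which comes from $\chi^\la \otimes \chi^\mu = \sum_\nu g(\la,\mu,\nu)\,\chi^\nu$. I would derive the upper bound from the character formula
\[
g(\la,\mu,\nu) \,=\, \frac{1}{n!} \sum_{\si \in S_n} \chi^\la(\si)\,\chi^\mu(\si)\,\chi^\nu(\si)
\]
by bounding $|\chi^\la(\si)| \le f^\la$ to pull $f^\la$ out, then applying Cauchy--Schwarz together with $\sum_\si \chi^\mu(\si)^2 = \sum_\si \chi^\nu(\si)^2 = n!$ to bound the remaining sum by $n!$. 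The $\min$ then follows by the symmetry of $g$ in its three arguments.

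For the forward implication, assuming \eqref{eq:Kron-intro}, the upper bound immediately yields
\[
f^{\la^{(n)}},\; f^{\mu^{(n)}},\; f^{\nu^{(n)}} \,\ge\, g\bigl(\la^{(n)},\mu^{(n)},\nu^{(n)}\bigr) \,\ge\, \sqrt{n!}\,e^{-O(\sqrt{n})},
\]
so all three sequences are Plancherel by definition. For the converse, given Plancherel $\{\la^{(n)}\}$ and $\{\mu^{(n)}\}$, the dimension identity gives
\[
\sum_\nu g\bigl(\la^{(n)},\mu^{(n)},\nu\bigr)\, f^\nu \,=\, f^{\la^{(n)}} f^{\mu^{(n)}} \,\ge\, n!\, e^{-O(\sqrt{n})}.
\]
Since $p(n) = e^{O(\sqrt{n})}$ by Hardy--Ramanujan, pigeonholing over $\nu \vdash n$ produces $\nu^{(n)}$ with $g \cdot f^{\nu^{(n)}} \ge n!\, e^{-O(\sqrt{n})}$. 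Combining this with the bounds $g \le f^{\la^{(n)}} \le \sqrt{n!}$ (from the character estimate) and $f^{\nu^{(n)}} \le \sqrt{n!}$ (trivial), each of these upper bounds forces the other factor in the product to be at least $\sqrt{n!}\,e^{-O(\sqrt{n})}$, simultaneously establishing \eqref{eq:Kron-intro} and the Plancherel property of $\{\nu^{(n)}\}$.

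I do not anticipate any genuine obstacle: the whole argument reduces to one character-theoretic inequality, a standard dimension identity, and pigeonhole. The step doing the real work is the symmetric estimate $g \le \min(f^\la, f^\mu, f^\nu)$, which is exactly the strength needed to convert the product lower bound $g \cdot f^{\nu^{(n)}} \gtrsim n!\,e^{-O(\sqrt{n})}$ into the individual lower bounds on $g$ and on $f^{\nu^{(n)}}$ demanded by the theorem.
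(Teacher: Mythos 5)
Your proposal is correct and follows essentially the same route as the paper: the forward direction via the symmetric bound $g(\la,\mu,\nu)\le\min\{f^\la,f^\mu,f^\nu\}$, and the converse via pigeonhole on the identity $\sum_\nu g(\la,\mu,\nu)\ts f^\nu=f^\la f^\mu$ over $p(n)=e^{O(\sqrt n)}$ terms, combined with $f^\nu\le\sqrt{n!}$ and $g\le f^\la\le\sqrt{n!}$. The paper merely packages this argument as a quantitative statement (its Theorem~\ref{t:Kron-main-groups}, with explicit dependence on a parameter $a$ and on $p(n)$), of which the stated theorem is the special case $a=e^{O(\sqrt n)}$.
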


This resolves Stanley's question on Kronecker coefficients as it implies
that $\rK(n)$ must be attained at VKLS shapes. We also obtain the several
extensions and refinements of these results.

\smallskip

\subsection{Littlewood--Richardson coefficients} Define

$$
\rC(n,k) \, := \, \max_{\la\vdash n} \. \max_{\mu \vdash k}\. \max_{\nu \vdash n-k}
 \. c^\la_{\mu,\nu}\..
$$

\smallskip

\begin{thm}\label{t:LR-intro-asy}
For all $n> k\ge 1$, we have:
$$\sqrt{\binom{n}{k}} \. e^{-d \ts \sqrt{n}} \. \le  \. \rC(n,k) \. \le  \. \sqrt{\binom{n}{k}}
$$
for some universal constant \ts $d>0$.
\end{thm}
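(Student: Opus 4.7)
The proof splits into the upper and lower bounds, both flowing from the adjoint pair of branching/induction identities
\begin{align*}
\mathrm{Res}^{S_n}_{S_k\times S_{n-k}}\chi^\la &\. = \. \sum_{\mu\vdash k,\,\nu\vdash n-k} c^\la_{\mu,\nu}\ts\chi^\mu\otimes\chi^\nu, \\
\mathrm{Ind}^{S_n}_{S_k\times S_{n-k}}\bigl(\chi^\mu\otimes\chi^\nu\bigr) &\. = \. \sum_{\la\vdash n} c^\la_{\mu,\nu}\ts\chi^\la,
\end{align*}
which are equivalent via Frobenius reciprocity. Evaluating each at the identity yields the pair of inequalities
$$
c^\la_{\mu,\nu}\ts f^\mu f^\nu \. \le \. f^\la \qquad \text{and} \qquad c^\la_{\mu,\nu}\ts f^\la \. \le \. \binom{n}{k}\ts f^\mu f^\nu.
$$
Multiplying the two cancels the dimensions and leaves $(c^\la_{\mu,\nu})^2 \le \binom{n}{k}$, which is the upper bound on $\rC(n,k)$.

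For the lower bound, my plan is to invoke Theorem~\ref{t:VKLS} (i.e.\ VKLS theory) to pick Plancherel partitions $\mu\vdash k$ and $\nu\vdash n-k$, so that $f^\mu f^\nu \ge \sqrt{k!\ts(n-k)!}\.e^{-a\sqrt{n}}$ for some universal $a>0$. Summing dimensions in the induction identity gives
$$
\sum_{\la\vdash n} c^\la_{\mu,\nu}\ts f^\la \. = \. \binom{n}{k}\ts f^\mu f^\nu.
$$
I would then apply Cauchy--Schwarz to this sum, together with $\sum_{\la \vdash n}(f^\la)^2 = n!$, to obtain
$$
\binom{n}{k}^2 (f^\mu f^\nu)^2 \. \le \. n! \cdot \sum_{\la\vdash n}\bigl(c^\la_{\mu,\nu}\bigr)^2 \. \le \. n!\.p(n)\.\rC(n,k)^2,
$$
where the second step uses that at most $p(n)$ of the LR coefficients are nonzero and each is bounded by $\rC(n,k)$. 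Combined with $k!\ts(n-k)! = n!/\binom{n}{k}$ and the Hardy--Ramanujan estimate $p(n) = e^{O(\sqrt n)}$, rearrangement yields $\rC(n,k) \ge \sqrt{\binom{n}{k}}\,e^{-O(\sqrt n)}$, matching the stated form.

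The only potential subtlety is the boundary regime where $k$ or $n-k$ is so small that VKLS asymptotics degenerate; there the trivial bound $\rC(n,k) \ge 1$ (obtained from any single positive LR coefficient, e.g.\ $c^{(n)}_{(k),(n-k)} = 1$ via Pieri) already dominates $\sqrt{\binom{n}{k}}\ts e^{-d\sqrt n}$ once $d$ is enlarged to absorb these finitely many cases. I do not foresee a genuine obstacle: the whole argument rests on two lines of dimension counting, one application of Cauchy--Schwarz, and the existence of Plancherel partitions of every size, all of which are immediate from the material preceding the statement.
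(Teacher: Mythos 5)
Your proposal is correct, and while your upper bound coincides with the paper's, your lower bound takes a genuinely different route. For the upper bound, multiplying $c^\la_{\mu,\nu}f^\mu f^\nu\le f^\la$ and $c^\la_{\mu,\nu}f^\la\le\binom{n}{k}f^\mu f^\nu$ is exactly the mechanism behind Lemma~\ref{l:LR-squared-groups-2} (and the remark in~$\S$\ref{ss:finrem-LR}), so no difference there. For the lower bound, the paper does not choose any particular $\mu,\nu$: it proves the character-theoretic identity of Lemma~\ref{l:LR-squared-groups}, namely $\sum_{\la,\mu,\nu}(c^\la_{\mu,\nu})^2=\sum_{\al\in\Conj(S_k\times S_{n-k})}z_\al(S_n)/z_\al(S_k\times S_{n-k})\ge\binom{n}{k}$, and then divides by the number $p(n)\ts p(k)\ts p(n-k)$ of triples (Proposition~\ref{p:LR-groups-bounds}, Corollary~\ref{c:LR-asy}). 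You instead fix high-dimensional $\mu\vdash k$, $\nu\vdash n-k$ and combine $\sum_\la c^\la_{\mu,\nu}f^\la=\binom{n}{k}f^\mu f^\nu$ with Cauchy--Schwarz and Burnside's identity~\eqref{eq:Burnside}; this is precisely the LR--analogue of the paper's Kronecker argument in Proposition~\ref{p:Kron-refined-groups}, and both routes lose only $e^{-O(\sqrt n)}$. Your version has the bonus of localizing a large coefficient at prescribed large-dimension $\mu,\nu$ (anticipating part~\WI\ of Theorem~\ref{t:LR-intro1}), while the paper's version requires no choice of $\mu,\nu$ and its identity is reused later for Stanley's asymptotic formula (Corollary~\ref{c:LR-stanley}). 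Two small touch-ups: Theorem~\ref{t:VKLS} asserts the \emph{shape} of Plancherel sequences, not their existence, so the fact you actually need is $\rD(m)\ge\sqrt{m!/p(m)}$ from~\eqref{eq:Burnside} (then $\sqrt{k}+\sqrt{n-k}\le\sqrt{2n}$ gives a uniform exponent); and to get a truly universal constant $d$ for all $n>k\ge1$, rather than a $(1+o(1))$ in the exponent, quote an explicit bound such as $p(m)<e^{\pi\sqrt{2m/3}}$ instead of the asymptotic Hardy--Ramanujan formula --- with that, even your boundary-case discussion becomes unnecessary.
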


This refines Stanley's asymptotic bound~$(\ast\ast)$ in Theorem~\ref{t:stanley-asy}.
Note that $\binom{n}{k}$ is the dominant term of the asymptotics for all \ts
$\sqrt{n} \le k \le n-\sqrt{n}$.  Note also that $\{\rC(n,k)\}$  are not
unimodal for some fixed~$n\ge 10$, see~$\S$\ref{a:LR} (see also~$\S$\ref{ss:LR-cont}).

\begin{thm} \label{t:LR-intro1}
Fix \ts $0<\theta<1$ \ts and let \ts $k_n := \lfloor\theta n\rfloor$.
Then:
\smallskip

\nin
$\WI$ \ts for every Plancherel partition sequence $\{\la^{(n)}\vdash n\}$,
there exist Plancherel partition sequences $\{\mu^{(n)}\vdash k_n\}$
and $\{\nu^{(n)}\vdash n-k_n\}$, s.t.
%
$$
(\divideontimes) \qquad \ c^{\la^{(n)}}_{\mu^{(n)},\. \nu^{(n)}} \, = \, \binom{n}{k_n}^{1/2} \, e^{-O(\sqrt{n})}\.,
$$
%

\smallskip

\nin
$\WII$ \ts  for all Plancherel partition sequences
$\{\mu^{(n)}\vdash k_n\}$ and $\{\nu^{(n)}\vdash n-k_n\}$, there exists a
Plancherel partition sequence $\{\la^{(n)}\vdash n\}$, s.t.~$(\divideontimes)$ holds,
\smallskip

\nin
$\WIII$ \ts  for all Plancherel partition sequences
$\{\la^{(n)}\vdash n\}$ and $\{\mu^{(n)}\vdash k_n\}$, there exists a
partition sequence $\{\nu^{(n)}\vdash n-k_n\}$, s.t.
$$
f^{\nu^{(n)}} \, = \, \sqrt{n!} \, e^{-O(n^{2/3}\log n)} \quad \text{and} \quad
c^{\la^{(n)}}_{\mu^{(n)},\. \nu^{(n)}} \, = \, \binom{n}{k_n}^{1/2} \, e^{-O(n^{2/3}\log n)}\..
$$
\end{thm}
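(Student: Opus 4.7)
The plan is to deduce each part from a suitable dimension identity combined with pigeonhole over the at most $p(n)=e^{O(\sqrt n)}$ partitions, and to use the universal upper bound $c^\la_{\mu,\nu}\le\sqrt{\binom{n}{k_n}}$ from Theorem~\ref{t:LR-intro-asy} to convert the pigeonhole inequality into the claimed lower bounds.

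For~\WII, the induced-representation dimension identity
$$\binom{n}{k_n}\,f^{\mu^{(n)}}\,f^{\nu^{(n)}}\,=\,\sum_{\la\vdash n} c^{\la}_{\mu^{(n)},\nu^{(n)}}\,f^\la$$
has left-hand side $\sqrt{n!\binom{n}{k_n}}\,e^{-O(\sqrt n)}$ by the Plancherel hypothesis on $\mu^{(n)}$ and $\nu^{(n)}$. Pigeonhole then produces $\la^{(n)}$ with $c^{\la^{(n)}}_{\mu^{(n)},\nu^{(n)}}\,f^{\la^{(n)}}\ge\sqrt{n!\binom{n}{k_n}}\,e^{-O(\sqrt n)}$, and dividing by the upper bound on $c$ forces $f^{\la^{(n)}}\ge\sqrt{n!}\,e^{-O(\sqrt n)}$, so $\la^{(n)}$ is Plancherel; the LR lower bound in~$(\divideontimes)$ then follows. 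For~\WI, I would instead use the dual identity
$$f^{\la^{(n)}}\,=\,\sum_{\mu\vdash k_n,\,\nu\vdash n-k_n} c^{\la^{(n)}}_{\mu,\nu}\,f^\mu f^\nu,$$
obtained by evaluating $\chi^{\la^{(n)}}\vert_{S_{k_n}\times S_{n-k_n}}$ at the identity. Pigeonhole over $p(k_n)p(n-k_n)=e^{O(\sqrt n)}$ pairs yields $(\mu^{(n)},\nu^{(n)})$ with $c\,f^\mu f^\nu\ge\sqrt{n!}\,e^{-O(\sqrt n)}$; since $c\le\sqrt{\binom{n}{k_n}}$ while $f^\mu f^\nu\le\sqrt{k_n!(n-k_n)!}=\sqrt{n!/\binom{n}{k_n}}$, both factors must individually be near their Stirling maxima, so $\mu^{(n)}$ and $\nu^{(n)}$ are Plancherel.

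For~\WIII, the relevant identity is the one-sided branching formula
$$f^{\la^{(n)}/\mu^{(n)}}\,=\,\sum_{\nu\vdash n-k_n} c^{\la^{(n)}}_{\mu^{(n)},\nu}\,f^\nu,$$
valid when $\mu^{(n)}\subset\la^{(n)}$. Since $k_n=\lfloor\theta n\rfloor$ with $\theta\in(0,1)$ fixed, the VKLS curves at sizes $k_n$ and $n$ are separated by $\Omega(\sqrt n)$ in each row, while Plancherel row fluctuations are only $O(n^{1/6})$ in bulk and $O(n^{1/3})$ at the edge; so the inclusion holds eventually, and if strict inclusion fails on the edge one first modifies $\mu^{(n)}$ by at most $O(n^{2/3})$ boxes (the total Plancherel deviation from VKLS), contributing an $e^{O(n^{2/3}\log n)}$ error through iterated application of Young's rule. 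Pigeonhole on the $p(n-k_n)=e^{O(\sqrt n)}$ summands then produces $\nu^{(n)}$ with $c\,f^{\nu^{(n)}}\ge f^{\la^{(n)}/\mu^{(n)}}\,e^{-O(\sqrt n)}$, and combining with $c\le\sqrt{\binom{n}{k_n}}$ yields both assertions of~\WIII\ provided we can establish
$$f^{\la^{(n)}/\mu^{(n)}}\,\ge\,\sqrt{(n-k_n)!\,\binom{n}{k_n}}\,e^{-O(n^{2/3}\log n)}.$$

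The \emph{main obstacle} is precisely this lower bound on $f^{\la/\mu}$. The averaged identity $\sum_{\la\vdash n} f^\la f^{\la/\mu}=f^\mu\cdot n!/k_n!$ shows that the target magnitude $\sqrt{(n-k_n)!\binom{n}{k_n}}$ is attained \emph{on average} over Plancherel $\la$, but to upgrade this to a uniform bound for a specific Plancherel $\la^{(n)}$ one must either invoke a concentration estimate for the Plancherel measure on near-VKLS shapes, or give a direct asymptotic evaluation via Naruse's hook formula together with bounds on excited-diagram sums (in the spirit of Morales--Pak--Panova). The weaker error $e^{-O(n^{2/3}\log n)}$ appearing in~\WIII, in contrast to the sharp $e^{-O(\sqrt n)}$ in~\WI\ and~\WII, reflects exactly the combined cost of possibly modifying $\mu^{(n)}$ to fit inside $\la^{(n)}$ and of performing this concentration/asymptotic step on $f^{\la^{(n)}/\mu^{(n)}}$.
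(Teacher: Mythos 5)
Your treatment of parts $\WI$ and $\WII$ is correct and is essentially the paper's own argument (Theorems~\ref{t:LR-groups-exist} and~\ref{t:LR-groups-exist1}): pigeonhole over the two branching identities in~\eqref{eq:LR-groups-def}, then split the product $c^\la_{\mu,\nu}\ts f^{\la}$ (resp.\ $c^\la_{\mu,\nu}\ts f^\mu f^\nu$) using the universal bounds $c^\la_{\mu,\nu}\le\binom{n}{k}^{1/2}$ and $f^\la\le\sqrt{n!}$, $f^\mu f^\nu\le\sqrt{k!\ts(n-k)!}$. Nothing to add there.

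Part $\WIII$, however, is not a proof as written: the entire weight of the statement rests on the lower bound $f^{\la^{(n)}/\mu^{(n)}}\ge\sqrt{n!/k_n!}\;e^{-O(n^{2/3}\log n)}$, and you explicitly leave this as ``the main obstacle,'' proposing either a Plancherel-measure concentration estimate or a Naruse/excited-diagram asymptotic analysis, neither of which you carry out (and the side remark about repairing non-containment ``by iterated application of Young's rule'' is too vague to certify an $e^{O(n^{2/3}\log n)}$ loss). The paper closes exactly this gap with a much lighter device that you already have in hand: it never analyzes $f^{\la/\mu}$ directly, but inflates the size to $\wt m_n=k_n+2d\ts n^{2/3}$ and applies the part-$\WI$ existence theorem (Theorem~\ref{t:LR-groups-exist}) to $\la^{(n)}$ at the cut $\wt m_n$, producing Plancherel $\wt\mu^{(n)}\vdash\wt m_n$ and $\nu^{(n)}\vdash n-\wt m_n$ with $c^{\la^{(n)}}_{\wt\mu^{(n)},\nu^{(n)}}$ near-maximal. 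The VKLS rigidity of Plancherel sequences (Theorem~\ref{t:VKLS}) guarantees $\mu^{(n)}\subseteq\wt\mu^{(n)}$, whence the monotonicity $f^{\la^{(n)}/\mu^{(n)}}\ge f^{\la^{(n)}/\wt\mu^{(n)}}\ge c^{\la^{(n)}}_{\wt\mu^{(n)},\nu^{(n)}}\cdot f^{\nu^{(n)}}$ gives the desired lower bound, the $e^{-O(n^{2/3}\log n)}$ error arising purely from trading $\binom{n}{\wt m_n}$, $(n-\wt m_n)!$ for $\binom{n}{k_n}$, $(n-k_n)!$ (a factor $n^{O(n^{2/3})}$); this is Theorem~\ref{t:skew-Plancherel}. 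With that bound in place, your final pigeonhole-and-split step over $\nu\vdash n-k_n$ coincides with the paper's argument in~$\S$\ref{ss:skew-third-case}. So: fix part $\WIII$ either by importing this containment trick, or by actually proving your proposed concentration/Naruse estimate --- as it stands, the claim is assumed, not proved.
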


Combined, these results resolve Stanley's question on LR--coefficients.
Note that Proposition~\ref{p:non-robust} shows that one cannot conclude
that LR--coefficients are large when the dimensions of all three
representations are large.  Note also that part~$\WIII$ requires
the bound to hold in a wider setting than~$\WI$.  It is thus
unsurprising that it has weaker conclusions.

\smallskip

We also obtain a partial converse:

\begin{thm} \label{t:LR-intro2}
Fix \ts $\theta=1/2$ \ts and let \ts $k_n := \lfloor\theta n\rfloor$.
Suppose $\{\la^{(n)}\vdash n\}$, $\{\mu^{(n)}\vdash k_n\}$ and $\{\nu^{(n)}\vdash n-k_n\}$ be
three partitions sequences, s.t.
\begin{equation} \label{eq:LR-intro-weak}
c^{\la^{(n)}}_{\mu^{(n)},\. \nu^{(n)}} \, = \, \binom{n}{n/2}^{1/2} \, e^{-O(n/\log n)}\..
\end{equation}
Then all three partition sequences satisfy:
$$f^{\la^{(n)}} \, = \, \sqrt{n!} \, e^{-O(n)}, \quad
f^{\mu^{(n)}} \, = \, \sqrt{(n/2)!} \, e^{-O(n)} \quad  \text{and} \quad
f^{\nu^{(n)}} \, = \, \sqrt{(n/2)!} \, e^{-O(n)}\..
$$
\end{thm}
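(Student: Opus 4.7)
The plan is to combine the two standard dimensional upper bounds on $c^\la_{\mu,\nu}$ (one from the branching rule, one from the induced representation) with the near-maximality hypothesis, extract a tight multiplicative relation among $f^\la$, $f^\mu$, $f^\nu$, and then deduce the individual lower bounds using the a priori maximum bounds $f^\la\le\sqrt{n!}$ and $f^\mu,\ts f^\nu\le\sqrt{(n/2)!}$.

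First I would recall the inequalities
\[
c^\la_{\mu,\nu}\,\le\,\frac{f^\la}{f^\mu\ts f^\nu}\qquad\text{and}\qquad c^\la_{\mu,\nu}\,\le\,\binom{n}{k}\.\frac{f^\mu\ts f^\nu}{f^\la},
\]
which follow respectively from the identity $f^\la=\sum_{\mu',\nu'}c^\la_{\mu',\nu'}f^{\mu'}f^{\nu'}$ (branching for $S_n\downarrow S_k\times S_{n-k}$) and from $\binom{n}{k}f^\mu f^\nu=\sum_\la c^\la_{\mu,\nu}f^\la$ (dimension of the induced representation). Their geometric mean recovers Stanley's bound $c^\la_{\mu,\nu}\le\binom{n}{k}^{1/2}$. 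Since the hypothesis pins $c^\la_{\mu,\nu}$ to within a factor $e^{O(n/\log n)}$ of this upper bound, both of the inequalities above must simultaneously be tight up to the same slack, yielding
\[
\frac{f^{\la^{(n)}}}{f^{\mu^{(n)}}\ts f^{\nu^{(n)}}} \,=\, \binom{n}{n/2}^{1/2}\ts e^{\pm O(n/\log n)}.
\]
Parametrizing $f^\la=\sqrt{n!}\.e^{-X}$, $f^\mu=\sqrt{(n/2)!}\.e^{-Y}$, $f^\nu=\sqrt{(n/2)!}\.e^{-Z}$ with $X,Y,Z\ge 0$, and using $\sqrt{n!}/(n/2)!=\binom{n}{n/2}^{1/2}$, the relation becomes $X=Y+Z\pm O(n/\log n)$. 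In particular $Y,Z\le X+O(n/\log n)$, so once $X=O(n)$ is known, the conclusions on $f^{\mu^{(n)}}$ and $f^{\nu^{(n)}}$ follow automatically.

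The hard part is then to establish $X=O(n)$, equivalently $f^{\la^{(n)}}\ge\sqrt{n!}\ts e^{-O(n)}$. The trivial estimate $f^\la\ge c^\la_{\mu,\nu}$ (coming from $f^\mu f^\nu\ge 1$) only delivers $X\le O(n\log n)$, losing a logarithmic factor.  To close this gap I would invoke the companion identity $\sum_{\mu',\nu'}(c^\la_{\mu',\nu'})^2\le\binom{n}{k}$ in tandem with $f^\la=\sum_{\mu',\nu'}c^\la_{\mu',\nu'}f^{\mu'}f^{\nu'}$ to force $f^\la$ to absorb contributions from many $(\mu',\nu')$ pairs of nearly-maximal dimensions, or alternatively apply a character-ratio estimate of Larsen--Shalev type to forbid $\chi^\la$ from being too concentrated on the subgroup $S_k\times S_{n-k}$. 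This is the step I expect to be the most delicate in the argument.
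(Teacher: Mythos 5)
Your reduction is correct as far as it goes: the two bounds $c^\lambda_{\mu,\nu}\le f^\lambda/(f^\mu f^\nu)$ and $c^\lambda_{\mu,\nu}\le \binom{n}{k}f^\mu f^\nu/f^\lambda$ multiply to exactly $\binom{n}{k}$, so near-maximality of $c^{\lambda^{(n)}}_{\mu^{(n)},\nu^{(n)}}$ forces $f^{\lambda^{(n)}}/(f^{\mu^{(n)}}f^{\nu^{(n)}})=\binom{n}{n/2}^{1/2}e^{\pm O(n/\log n)}$, and with $X,Y,Z\ge 0$ everything does reduce to showing $X=O(n)$, i.e.\ $f^{\lambda^{(n)}}\ge \sqrt{n!}\,e^{-O(n)}$. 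But that is precisely the theorem's entire content, and you have not proved it. You correctly note that the trivial bound $f^\lambda\ge c^\lambda_{\mu,\nu}$ loses a factor of $\log n$ in the exponent, and then you only gesture at two possible fixes. Neither works as stated. The sum-of-squares identity $\sum_{\mu',\nu'}(c^\lambda_{\mu',\nu'})^2\le\binom{n}{k}$ pulls in the \emph{wrong} direction: the hypothesis says the single term $(\mu,\nu)$ already nearly saturates this sum, so the coefficients are concentrated on one pair, and nothing forces $f^\lambda$ to ``absorb contributions from many pairs of nearly-maximal dimension''; in particular this gives no lower bound on $f^\mu f^\nu$, which is what you would need. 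The Larsen--Shalev suggestion is not developed at all.

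The paper closes this gap with Theorem~\ref{t:LR-upper-dim}: $f^{\lambda}\ge e^{-un}\bigl(c^{\lambda}_{\mu,\nu}\bigr)^{\log_2 n}$. The mechanism is genuinely different from either of your suggestions. One first proves, via the skew Cauchy identity for Schur functions, that $c^{\lambda}_{\mu,\nu}\le e^{a\sqrt{n}}\,\rC(\mu)\,\rC(\nu)$ (Lemma~\ref{l:LR-ind-step}), where $\rC(\rho)$ is the largest LR--coefficient with top row $\rho$. Iterating this together with $f^{\rho}\ge f^{\alpha}f^{\beta}c^{\rho}_{\alpha,\beta}$ down a full binary tree of depth $\lceil\log_2 n\rceil$ whose nodes split each partition at its maximizing pair, and controlling the accumulated error terms $e^{-a\sqrt{k_\rho}}$ by Cauchy--Schwarz across each level, yields the stated power-of-$\log_2 n$ amplification; raising $c^{\lambda^{(n)}}_{\mu^{(n)},\nu^{(n)}}\approx 2^{n/2}$ to the power $\log_2 n$ is exactly what recovers the missing factor $n\log n$ in $\log f^{\lambda^{(n)}}$. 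Without this (or some substitute of comparable strength), your argument establishes only $f^{\lambda^{(n)}}\ge \sqrt{n!}\,e^{-O(n\log n)}$ and the corresponding weaker conclusions for $\mu^{(n)},\nu^{(n)}$, which is strictly short of the theorem.
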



\smallskip

\subsection{Structure of the paper} We start with definitions and
known results in Section~\ref{s:def}.  Kronecker coefficients and
the proof of Theorem~\ref{t:kron-intro} are given in Section~\ref{s:Kron}.
We continue with a lengthy Section~\ref{s:LR} on LR--coefficients,
including the proof of Theorem~\ref{t:LR-intro-asy} and parts~$\WI$, $\WII$
of Theorem~\ref{t:LR-intro1}.  We continue with a short but technical
Section~\ref{s:LR-imply-dim}, again on LR--coefficients, where we prove
Theorem~\ref{t:LR-intro2}.  We apply our results in Section~\ref{s:skew}
to the estimate the number of standard Young tableaux of skew shape,
proving the remaining part~$\WII$ of Theorem~\ref{t:LR-intro1}.
We conclude with final remarks and open problems (Section~\ref{s:fin-rem}).

\bigskip

\section{Definitions, notation and background results}\label{s:def}

\subsection{Partitions}\label{ss:def-part}
Let \ts $p(n)= \#\{ \la \vdash n\}$ \ts be the \emph{number of partitions} of~$n$.
The \emph{Hardy--Ramanujan asymptotic formula} gives:
$$p(n) \. \sim  \. \frac{1}{4\ts n \ts \sqrt{3}}\.
e^{\pi\ts \sqrt{\frac{2n}{3}}} \quad \text{as} \ \ n\to \infty.
$$
We also need the log-concavity of the partition function:
$$
p(n-k) \cdot p(n+k) \, \ge \, p(n)^2,
$$
for all $n>25$, $k=1$, and for all $n>k>1$~\cite{DP}.

\smallskip

\subsection{Dimensions and Young tableaux}\label{ss:def-yt}
We assume the reader is familiar with standard definition and
notation in the area; we follow~\cite{EC2} in most cases.

Let \ts $f^{\la/\mu}=\#\SYT(\la/\mu)$ \ts denote the number of
\emph{standard Young tableaux} of skew shape $\la/\mu$.
Recall the \emph{hook-length formula} (see e.g.~\cite[$\S$7.21]{EC2}):
\begin{equation}\label{eq:HLF}
f^{\la} \, = \, n!\, \prod_{(i,j)\in \la} \. \frac{1}{h_{ij}}\,,
\end{equation}
where $n=|\la|$ and $h_{ij}= \la_i+\la_j' -i-j+1$ is the
\emph{hook length}.  For skew shapes, we have the following
corollary of the \emph{Naruse hook-length formula}:
\begin{equation}\label{eq:NHLF-lower}
f^{\la/\mu} \, \ge \, n!\, \prod_{(i,j)\in \la/\mu} \. \frac{1}{h_{ij}}\,,
\end{equation}
where $n=|\la/\mu|$, see~\cite{MPP}.

Let $s_\la(x_1,x_2,\ldots)$ denote the \emph{Schur function}.
For the evaluations \ts $s_\la(1^m):= s_\la(1,\ldots,1)$, $m$ ones,
we have the following \emph{hook-content formula}:
\begin{equation}\label{eq:HCF}
s_{\lambda}(1^{m}) \, = \, \prod_{(i,j)\in \la} \. \frac{m +j-i}{h_{ij}}\..
\end{equation}
(see e.g.~\cite[$\S$7.21]{EC2}).

\smallskip

\subsection{Largest dimension} \label{ss:def-largest-dim}
Vershik and Kerov~\cite{VK2} proved that for all $n$ large enough:
\begin{equation}\label{eq:VK}
\sqrt{n!} \. e^{-\co\ts \sqrt{n}\ts (1+o(1))} \, \le \, \rD(n) \, \le \,  \sqrt{n!} \. e^{-\ct\ts \sqrt{n}\ts (1+o(1))} ,
\end{equation}
where
\begin{equation}\label{eq:VK-const}
\co \. = \. \pi\ts \sqrt{\frac{1}{6}} \. \approx \ts 1.2825 \quad \text{and} \quad \ct \. = \. \frac{\pi-2}{\pi^2} \. \approx \ts 0.1157\..
\end{equation}
See~\cite[\href{https://oeis.org/A003040}{A003040}]{OEIS} and $\S$\ref{a:LR}
for the initial values and~\cite{VP} for numerical experiments.
Note that the lower bound follows from the Burnside identity:
\begin{equation}\label{eq:Burnside}
\sum_{\la\vdash n} \. \bigl(f^\la\bigr)^2  \, = \, n!
\end{equation}
However, the upper bound is rather remarkable.
In fact, Vershik and Kerov proved in~\cite{VK2} that with respect to the
Plancherel measure \ts $(f^\la)^2/n!$, almost all dimensions $f^\la$
lie in the interval in~\eqref{eq:VK}.  Moreover,
in the same setting, Bufetov proved~\cite{Buf}:
$$
f^\la \, = \, \sqrt{n!} \. e^{-h\ts \sqrt{n}\ts (1+o(1))} \ \ \. 
\text{a.s.\ts, \, for some} \ \. \co \ge h\ge \ct\ts.
$$
This should be compared with the \emph{Vershik--Kerov--Pass conjecture}:
$$\rD(n) \, = \, \sqrt{n!} \. e^{-a\ts \sqrt{n}\ts (1+o(1))}
$$
for some \. $\ct \le a \le \co$. This was implicitly suggested by Vershik in~\cite{Ver}
(in greater generality), and formally stated by Kerov and Pass~\cite{KP}.  


\smallskip

\subsection{Stable limit shapes} \label{ss:def-limit}
The following definition follows~\cite{MPP}.  Let $\om: [0,a] \to [0,b]$ be a
non-increasing continuous function.  Suppose sequence of partitions $\{\la^{(n)}\}$
satisfies the following property
$$
\bigl(\sqrt{n}-L\bigr)\ts\om \, < \,  \bigl[\la^{(n)}\bigr]\, < \,  \bigl(\sqrt{n}+L\bigr)\ts\om\ts,
\ \quad \text{for some} \ \ L>0\ts,
$$
where we write $[\lambda]$ to denote a function giving the boundary of
Young diagram~$\lambda$.
In this case we say that a sequence of partitions $\{\la^{(n)}\}$
has a \emph{strongly stable shape $\om$}, and write \ts $\la^{(n)} \mapsto \om$.

Suppose we are given two stable shapes $\om,\pi: [0,a] \to [0,b]$,
such that $\pi(x) \le \om(x)$ for all $x\ge 0$.  To simplify the notation,
denote by $\cC=\cC(\om/\pi) \ssu \rr_+^2$ the region between the curves.
One can view $\cC$ as the stable shape of skew shapes, and denote
by $\area(\om/\pi)$ the area of~$\cC$.

Define the \emph{hook function} $\hba: \cC\to \rr_+$
to be the scaled function of the hooks:  \ts
$\hba(x,y):= h\bigl(\lfloor x\sqrt{n}\rfloor,\lfloor y\sqrt{n}\rfloor\bigr)$.

\begin{thm}[{\cite{MPT} and~\cite[$\S$6]{MPP}}] \label{t:stable}
In notation above, suppose \ts $\la^{(n)} \mapsto \om$,
$\mu^{(n)} \mapsto \pi$ and $\area(\om/\pi)=1$. Then we have:
$$
f^{\la^{(n)}/\mu^{(n)}} \, = \, \sqrt{n!} \. e^{c(\om/\pi) \ts n \ts + \ts o(n)}\ts,
$$
for some constant $c(\om/\pi)$ satisfying
$$\Ups(\om/\pi) \. - \. \frac12 \, \le \, c(\om/\pi) \, \le \, \Ups(\om/\pi) \. - \. \frac12 \. + \. \log 2\ts,
$$
where
$$
\Ups(\om/\pi) \. := \. -\. \iint_\cC \. \log \hba(x,y) \, dx \. dy\ts.
$$
\end{thm}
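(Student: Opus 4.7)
The plan is to apply the Naruse hook-length formula (NHLF) from both sides, reducing the statement to a Riemann-sum asymptotic for the integral $\Ups(\om/\pi)$. First I would record the basic scaling: by the strong stability hypotheses, for each cell $(i,j)\in\la^{(n)}/\mu^{(n)}$ lying in the interior of $\cC$, one has $h_{ij}(\la^{(n)})\.=\.\sqrt{n}\ts\hba(i/\sqrt{n},\ts j/\sqrt{n})\ts(1+o(1))$, uniformly on compact subsets of $\cC$. Since $\area(\om/\pi)=1$, the $n$ cells of $\la^{(n)}/\mu^{(n)}$ form a mesh-$1/\sqrt{n}$ discretization of $\cC$, so
$$
\sum_{(i,j)\ts\in\ts\la/\mu}\log h_{ij}\,=\,\tfrac{n}{2}\log n\.-\.n\ts\Ups(\om/\pi)\.+\.o(n).
$$
Combined with Stirling's $\log n!=\tfrac{1}{2}\log n!+\tfrac{n}{2}\log n-\tfrac{n}{2}+o(n)$, this is the only identity needed to translate cell sums into the target exponent.

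For the lower bound I would apply the single-term Naruse estimate \eqref{eq:NHLF-lower} and take logarithms:
$$
\log f^{\la/\mu}\,\ge\,\log n!\.-\.\sum_{(i,j)\ts\in\ts\la/\mu}\log h_{ij}\,=\,\tfrac{1}{2}\log n!\.+\.\bigl(\Ups-\tfrac{1}{2}\bigr)\ts n\.+\.o(n),
$$
which yields $c(\om/\pi)\ge \Ups(\om/\pi)-1/2$.

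For the upper bound I would invoke the full Naruse identity
$$
f^{\la/\mu}\,=\,n!\sum_{D\ts\in\ts\EA(\la/\mu)}\prod_{(i,j)\ts\in\ts\la\setminus D}\frac{1}{h_{ij}(\la)},
$$
where $\EA(\la/\mu)$ is the set of excited diagrams of $\mu$ in $\la$. Two ingredients are needed: (i) the combinatorial bound $|\EA(\la/\mu)|\le 2^{\ts n+o(n)}$, obtainable from the standard bijection between excited diagrams and non-intersecting flagged lattice-path systems inside $\la/\mu$; and (ii) a uniform Riemann-sum estimate
$$
\sum_{(i,j)\ts\in\ts\la\setminus D}\log h_{ij}\,=\,\tfrac{n}{2}\log n\.-\.n\ts\Ups(\om/\pi)\.+\.o(n)\quad\text{uniformly in }D\in\EA(\la/\mu),
$$
reflecting that each complement $\la\setminus D$ still consists of $n$ cells whose scaled positions approximate $\cC$ to within $o(1)$. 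Together with Stirling, these give
$$
\log f^{\la/\mu}\,\le\,\tfrac{1}{2}\log n!\.+\.\bigl(\Ups-\tfrac{1}{2}+\log 2\bigr)\ts n\.+\.o(n),
$$
and hence $c(\om/\pi)\le \Ups(\om/\pi)-1/2+\log 2$.

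The main obstacle will be the uniformity in step~(ii). Cells of $\la\setminus D$ can lie close to the boundary of $\cC$, where $\hba\to 0$ and $\log\hba$ is merely integrable rather than bounded, so the Riemann-sum approximation degenerates pointwise in a boundary layer. My plan is to truncate a strip of area $\ve$ along $\partial\cC$, apply uniform convergence of hooks on the interior complement, use the integrability of $\log\hba$ on $\cC$ to bound the truncated contribution by $o_\ve(n)$, and finally let $\ve\to 0$. The combinatorial count~(i) is comparatively routine once excited moves are encoded as flagged non-intersecting paths, and any sub-exponential slack in the bound is harmlessly absorbed into the $o(n)$ error.
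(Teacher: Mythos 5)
The paper does not prove this statement itself (it is quoted from [MPT] and [MPP, \S 6], with the remark that the lower bound follows from~\eqref{eq:NHLF-lower} while the upper bound needs the full Naruse formula), and your outline is essentially the route of those sources: the lower bound via the single term $D=\mu$ of NHLF plus a Riemann-sum estimate for the hook integral (with the boundary-layer truncation you describe) is correct, and the $+\log 2$ in the upper bound does indeed come from a bound of the form $|\mathcal{E}(\la/\mu)|\le 2^{\ts n+o(n)}$ on the number of excited diagrams via non-intersecting paths.

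However, your step~(ii) for the upper bound is a genuine gap: the claimed uniform asymptotics
$\sum_{(i,j)\in\la\setminus D}\log h_{ij}=\tfrac n2\log n-n\ts\Ups(\om/\pi)+o(n)$ for \emph{all} excited diagrams $D$ is false, and so is its justification that every complement $\la\setminus D$ approximates $\cC$. Excited moves push the cells of $D$ to the south-east, so the complement acquires cells in the north-west part of $\la$, far from $\cC$, where the hooks are larger. Concretely, take $\la$ a $2m\times 2m$ square and $\mu$ an $m\times m$ square (rescaled to $\area=1$): for the fully excited $D$ sitting in the south-east corner, the complement contains the north-west quadrant, and $\sum\log h$ differs from the $D=\mu$ value by $\Theta(n)$. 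No truncation near $\partial\cC$ can repair this, because the statement itself fails. What rescues the argument (and is what [MPP] use) is a one-sided, purely combinatorial fact you do not mention: since $h_{i,j}>h_{i+1,j+1}$, each excited move replaces the cell $(i+1,j+1)$ of the complement by $(i,j)$ and hence \emph{increases} $\prod_{u\in\la\setminus D}h(u)$; therefore every term of the NHLF sum is at most the $D=\mu$ term, giving
$f^{\la/\mu}\le |\mathcal{E}(\la/\mu)|\cdot n!\prod_{u\in\la/\mu}h(u)^{-1}$.
With this monotonicity in place of your (ii), only the Riemann-sum estimate for the single region $\cC$ is needed (the same one as in your lower bound), and your count~(i) then yields the stated upper bound $\Ups(\om/\pi)-\tfrac12+\log 2$.
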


The constant $\Ups(\om/\pi)$ in the theorem is called the \emph{hook integral}
of the limit shape.  The lower bound on $c(\om/\pi)$ in the theorem
follows easily from~\eqref{eq:NHLF-lower}, but the upper bound
requires full power of the Naruse hook-length formula~\cite{MPP}.

\smallskip

\subsection{VKLS shapes} \label{ss:VKLS}
A partition sequence \ts $\{\la^{(n)}\}$ \ts is said to have
\emph{Vershik--Kerov--Logan--Shepp} (VKLS) \emph{shape} \ts if
$$
\left|\ts \frac{1}{\sqrt{n}} \. \bigl[\la^{(n)}\bigr] \. - \. \VK \ts \right|\, < \, C\ts n^{1/6}\quad
\text{for some} \ \ C>0,
$$
where \ts $\VK: \ts [0,2]\to [0,2]$ \ts is obtained by $135^\circ$ rotation
of the curve \ts $\bigl(x,\vp(x)\bigr)$, defined by
\begin{equation}\label{eq:vkls-shape}
\vp(x) \. := \. \frac{2}{\pi} \.\left(\ts x \ts \arcsin \frac{x}{\sqrt{2}} \. + \. \sqrt{2-x^2}\ts \right),
\ \quad
-\ts \sqrt{2}\, \le \. x \. \le \. \sqrt{2}\..
\end{equation}
Here the distance $|\cdot|$ is the \emph{Fr\'echet distance} between the curves.

It follows from Theorem~\ref{t:VKLS}, that $\VK$ is the unique curve of area~1,
s.t. $c(\VK)=0$ and $\Ups(\VK)=1/2$.
The latter holds since the lower bound in~\eqref{eq:NHLF-lower} is an equality for
straight shapes by the hook-length formula~\eqref{eq:HLF}.  This implies:

\begin{prop}  Suppose a partition sequence \ts $\{\la^{(n)}\}$ \ts has a 
strongly stable shape~$\om$, and satisfies
$$f^{\la^{(n)}} \, = \, \sqrt{n!} \. e^{-o(n)}\ts. 
$$
Then \ts $\om=\vp$ \ts defined in~\eqref{eq:vkls-shape}. 
\end{prop}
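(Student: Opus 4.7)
The plan is to apply Theorem~\ref{t:stable} in the straight-shape case $\mu^{(n)}=\varnothing$ (so $\pi\equiv 0$), and then invoke the variational characterization of $\vp$ as the unique minimizer of the hook integral among area-$1$ shapes.

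First, since $|\la^{(n)}|=n$ and $\la^{(n)}\mapsto\om$ strongly stably, rescaling by $1/\sqrt{n}$ forces the area under $\om$ to equal~$1$, which is the area hypothesis required by Theorem~\ref{t:stable}. Applying that theorem (with $\cC$ the region under $\om$) yields
$$f^{\la^{(n)}} \, = \, \sqrt{n!}\, e^{c(\om)\, n \,+\, o(n)}.$$
I then use the observation highlighted just after Theorem~\ref{t:stable}: for straight shapes the Naruse lower bound~\eqref{eq:NHLF-lower} reduces to the hook-length formula~\eqref{eq:HLF} and is therefore an equality, so the lower inequality in Theorem~\ref{t:stable} is sharp in this case. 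This gives $c(\om)=\Ups(\om)-\tfrac12$. Comparing the displayed estimate with the hypothesis $f^{\la^{(n)}}=\sqrt{n!}\,e^{-o(n)}$ forces $c(\om)\ts n = o(n)$, hence $c(\om)=0$ and $\Ups(\om)=1/2$.

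The last step is the variational characterization underlying Theorem~\ref{t:VKLS}: among non-increasing continuous curves of area~$1$, the hook integral $\Ups$ attains its minimum value $1/2$ \emph{uniquely} at the curve $\vp$ of~\eqref{eq:vkls-shape}. This is the classical content of the Vershik--Kerov and Logan--Shepp analysis of Plancherel measure, and it completes the identification $\om=\vp$.

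The main obstacle is this final uniqueness statement; while standard in the VKLS literature, it is what does the real analytic work. Without it one could only conclude that $\om$ lies in the level set $\{\Ups = 1/2\}$ among area-$1$ shapes. The two preceding steps are essentially bookkeeping: a direct appeal to Theorem~\ref{t:stable} together with the happy fact that~\eqref{eq:NHLF-lower} is tight for straight shapes, collapsing the two-sided bound on $c(\om)$ to a single value.
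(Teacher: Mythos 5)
Your proof is correct and follows essentially the same route as the paper: the paper's (implicit) argument is precisely to combine Theorem~\ref{t:stable} in the straight-shape case, the tightness of~\eqref{eq:NHLF-lower} for straight shapes (so that $c(\om)=\Ups(\om)-\tfrac12$), and the VKLS variational fact that $\vp$ is the unique area-$1$ curve with $\Ups=\tfrac12$. You correctly identify that the uniqueness of the minimizer of the hook integral is the one nontrivial external input, which the paper likewise imports from the Vershik--Kerov/Logan--Shepp theory.
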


We refer to~\cite{Rom} for the detailed proof of the VKLS theorem 
(Theorem~\ref{t:VKLS}), and its many interesting applications.

\medskip

\section{Kronecker coefficients} \label{s:Kron}

\subsection{General inequalities} \label{ss:Kron-gen}
The Kronecker coefficient \ts $g(\la,\mu,\nu)$ \ts defined in the introduction
satisfy:
$$
 g(\la,\mu,\nu) \. = \. \bigl\<\chi^\la,\ts \chi^\mu \ts \chi^\nu\bigr\> \. = \. \bigl\<\chi^\la\ts\chi^\mu \ts \chi^\nu, 1\bigr\>.
$$
This implies the symmetries
\begin{equation}\label{eq:Kron-sym-groups}
g(\la,\mu,\nu) \. = \. g(\la,\nu,\mu) \. = \. g(\mu,\la,\nu)  \. = \. \ldots
\end{equation}
In particular, we have a general upper bound:
\begin{equation}\label{eq:Kron-upper-groups}
g(\la,\mu,\nu) \. \le \. f^\la \cdot \min\bigl\{f^\mu/f^\nu, \ts f^\nu/f^\mu\bigr\} \. \le \. f^\la\ts.
\end{equation}

\smallskip

\begin{prop}  \label{p:Kron-main-groups}
Let \ts $\la,\mu,\nu\vdash n$. Suppose \ts $g(\la,\mu,\nu) \ge \rD(n)/a$, for some $a\ge 1$.
Then: \ts $f^\la,  f^\mu, f^\nu \ge \rD(n)/a$.
\end{prop}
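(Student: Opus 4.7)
The plan is to observe that the conclusion follows immediately by combining the general upper bound \eqref{eq:Kron-upper-groups} with the full $S_3$--symmetry \eqref{eq:Kron-sym-groups} of the Kronecker coefficient. From \eqref{eq:Kron-upper-groups} we already have
\[
g(\la,\mu,\nu) \, \le \, f^\la\.,
\]
so the hypothesis \ts $g(\la,\mu,\nu)\ge \rD(n)/a$ \ts yields \ts $f^\la \ge \rD(n)/a$ \ts at once.

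For the other two dimensions, I would just invoke \eqref{eq:Kron-sym-groups}: since \ts $g(\la,\mu,\nu) = g(\mu,\la,\nu) = g(\nu,\la,\mu)$, the same inequality \eqref{eq:Kron-upper-groups} applied to the permuted triples (with $\mu$, resp.~$\nu$, in the first slot) gives
\[
g(\la,\mu,\nu) \, \le \, f^\mu \quad \text{and} \quad g(\la,\mu,\nu) \, \le \, f^\nu\..
\]
Combining with the lower bound \ts $g(\la,\mu,\nu)\ge \rD(n)/a$ \ts completes the proof.

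There is essentially no obstacle here — the statement is a direct corollary of the two displayed facts just above it, and the ``work'' is only to recognize that the general upper bound \eqref{eq:Kron-upper-groups} can be applied in each of the three arguments thanks to the symmetry. This proposition is a packaging step, extracting from the character--theoretic bound the clean consequence that controls dimensions, which will then be fed into Theorem~\ref{t:VKLS} in the next subsection to force each of \ts $\la, \mu, \nu$ \ts into VKLS shape, thereby proving the forward direction of Theorem~\ref{t:kron-intro}.
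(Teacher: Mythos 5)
Your argument is correct and is exactly the paper's proof: the authors likewise deduce the result immediately from the upper bound \eqref{eq:Kron-upper-groups} together with the symmetries \eqref{eq:Kron-sym-groups}, applying the bound in each of the three slots. Nothing is missing.
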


\begin{proof}  This follows immediately from~\eqref{eq:Kron-upper-groups}
and the symmetries~\eqref{eq:Kron-sym-groups}.
\end{proof}

\medskip

\subsection{Largest Kronecker coefficients}
Define the \emph{largest Kronecker coefficient} of~$G$ as follows:
$$\rK(n) \. := \. \max_{\la,\mu,\nu \vdash n}  \. g(\la,\mu,\nu)\ts.
$$
Similarly, define
$$
\rA(n) \. := \. \sum_{\la,\mu,\nu \vdash n} \. g(\la,\mu,\nu)^2
$$
\smallskip

\begin{lemma}\label{l:kron-sum-squares}
We have:
\begin{equation}\label{eq:Kron-squares-groups}
\rA(n) \, = \, \sum_{\al \vdash n} \. z_\al\ts,
\end{equation}
where $z_\al = |C(\al)|$ is the size of the centralizer of an element $x\in \al$.
\end{lemma}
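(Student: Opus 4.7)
The plan is to recognize this as a standard character-theoretic sum over the group $S_n$ and evaluate it by swapping the orders of summation and invoking column orthogonality. Nothing deep is needed; the only care required is bookkeeping the factors of $n!$ and $z_\alpha$ correctly.

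First I would rewrite the Kronecker coefficient as a triple character inner product:
$$
g(\la,\mu,\nu) \, = \, \bigl\<\chi^\la\ts\chi^\mu\ts\chi^\nu,\ts 1\bigr\> \, = \, \frac{1}{n!} \sum_{\sigma \in S_n} \chi^\la(\sigma)\ts\chi^\mu(\sigma)\ts\chi^\nu(\sigma)\ts.
$$
Squaring and summing over $\la,\mu,\nu\vdash n$, and then swapping the order of summation, I would obtain
$$
\rA(n) \, = \, \frac{1}{(n!)^2} \sum_{\sigma,\tau \in S_n} \left(\sum_{\la\vdash n} \chi^\la(\sigma)\ts\chi^\la(\tau)\right)\!\left(\sum_{\mu\vdash n} \chi^\mu(\sigma)\ts\chi^\mu(\tau)\right)\!\left(\sum_{\nu\vdash n} \chi^\nu(\sigma)\ts\chi^\nu(\tau)\right).
$$

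Next I would apply column orthogonality of the character table of~$S_n$: for any $\sigma,\tau \in S_n$,
$$
\sum_{\la\vdash n} \chi^\la(\sigma)\ts\chi^\la(\tau) \, = \, \begin{cases} z_\al & \text{if } \sigma,\tau \text{ have common cycle type } \al, \\ 0 & \text{otherwise.}\end{cases}
$$
Substituting this identity three times restricts the $(\sigma,\tau)$ sum to pairs of the same cycle type and contributes a factor $z_\al^3$ for each such pair. Since there are exactly $(n!/z_\al)^2$ ordered pairs of permutations of a given cycle type~$\al$, I get
$$
\rA(n) \, = \, \frac{1}{(n!)^2} \sum_{\al\vdash n} \left(\frac{n!}{z_\al}\right)^{\!\!2} \ts z_\al^3 \, = \, \sum_{\al\vdash n} z_\al\ts,
$$
which is the desired identity. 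There is no real obstacle here beyond making sure the multiplicities line up; the computation is entirely mechanical once the column orthogonality relation is invoked.
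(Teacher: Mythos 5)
Your proof is correct and follows essentially the same route as the paper: expand $g(\la,\mu,\nu)$ as a triple character sum, square and sum over $\la,\mu,\nu$, apply column orthogonality to reduce the double sum over $S_n\times S_n$ to pairs of equal cycle type, and count $(n!/z_\al)^2$ such pairs each contributing $z_\al^3$. No discrepancies to note.
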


\begin{proof}[Proof of Lemma~\ref{l:kron-sum-squares}]
We have:
$$g(\la,\mu,\nu) \, = \, \frac{1}{n!} \. \sum_{x \in G} \. \chi^\la(w)\ts \chi^\mu(x)\ts\chi^\nu(x)\ts.
$$
Hence, we can write the sum of squares as
$$\begin{aligned}
& \sum_{\la,\mu,\nu \vdash n} \. g(\la,\mu,\nu)^2 \, = \, \frac{1}{n!^2} \. \sum_{x,y \in S_n} \. \sum_{\la \vdash n} \. \chi^\la(x)\ts\chi^\la(y) \. \sum_{\mu \vdash n} \. \chi^\mu(x)\ts\chi^\mu(y) \. \sum_{\nu \vdash n} \. \chi^\nu(x)\ts \chi^\nu(y)\\
& \quad = \, \frac{1}{n!^2} \. \sum_{x,y\in S_n} \left(\sum_{\la\vdash n} \. \chi^\la(x)\ts \chi^\la(y)\right)^3 \, = \, \frac{1}{n!^2} \. \sum_{\al \vdash n} \. \left(\frac{n!}{z_\al}\right)^2 \ts (z_\al)^3\, = \, \sum_{\al \vdash n} \. z_\al\..
\end{aligned}
$$
Here the last equality follows from the orthogonality of characters.
\end{proof}

\begin{prop}  \label{p:Kon-max-groups}
We have:
$$
\frac{\sqrt{n!}}{p(n)^{3/2}} \, \le \, \rK(n) \, \le \, \rD(n)\ts.
$$
\end{prop}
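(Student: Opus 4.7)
The upper bound $\rK(n)\le \rD(n)$ is immediate: by the symmetry~\eqref{eq:Kron-sym-groups} and the trivial inequality~\eqref{eq:Kron-upper-groups}, every Kronecker coefficient $g(\la,\mu,\nu)$ is bounded above by $f^\la\le \rD(n)$. (Equivalently, this is a special case of Proposition~\ref{p:Kron-main-groups} with $a=1$.)

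For the lower bound, the plan is to combine Lemma~\ref{l:kron-sum-squares} with a pigeonhole argument. Since the number of triples of partitions of~$n$ is $p(n)^3$, we have
$$
\rK(n)^2 \cdot p(n)^3 \,\ge\, \sum_{\la,\mu,\nu\vdash n} g(\la,\mu,\nu)^2 \,=\, \rA(n) \,=\, \sum_{\al\vdash n} z_\al\ts,
$$
where the last equality is~\eqref{eq:Kron-squares-groups}. To conclude, observe that the centralizer of the identity permutation has size $z_{(1^n)}=n!$, so the single term corresponding to $\al=(1^n)$ already gives $\sum_{\al\vdash n} z_\al \ge n!$. Rearranging yields
$$
\rK(n) \,\ge\, \sqrt{\rA(n)/p(n)^3} \,\ge\, \frac{\sqrt{n!}}{p(n)^{3/2}}\ts,
$$
as desired.

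There is essentially no obstacle here: the bound is just the second-moment/pigeonhole estimate applied to the identity~\eqref{eq:Kron-squares-groups}, once one notices that the $\al=(1^n)$ term alone supplies the required $n!$. Any improvement would have to come from lower bounding $\sum_\al z_\al$ by more than $n!$ (which is easy but yields only lower-order gains) or from exploiting that the Kronecker coefficients are concentrated on a smaller set of triples than all of $p(n)^3$.
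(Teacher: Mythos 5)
Your proof is correct and is essentially the paper's own argument: the lower bound comes from Lemma~\ref{l:kron-sum-squares} via the single term $z_{(1^n)}=n!$ together with the pigeonhole over $p(n)^3$ triples, and the upper bound from~\eqref{eq:Kron-upper-groups}. You have merely written out explicitly the averaging step that the paper leaves implicit.
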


\begin{proof}
In~\eqref{eq:Kron-squares-groups}, we have \ts $\text{RHS} \ge z_1=n!$.
This gives the lower bound.  
The upper bound follows from~\eqref{eq:Kron-upper-groups}.
\end{proof}

The following result can be viewed as a converse of
Proposition~\ref{p:Kron-main-groups}.

\begin{thm}  \label{t:Kron-main-groups}
Let \ts $\mu,\ts \nu\vdash n$. Suppose \ts
$f^\mu, \ts f^\nu \ge \rD(n)/a$ \ts
for some \ts $a\ge 1$. Then there exist \ts
$\la\vdash n$, s.t.\
$$
f^\la \, \ge \, \frac{\rD(n)}{a\. \sqrt{p(n)}}\.
\quad \text{and} \quad
g(\la,\mu,\nu) \, \ge \, \frac{\rD(n)}{a^2\ts p(n)}\..
$$
\end{thm}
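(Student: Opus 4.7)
The plan is to reduce everything to a single pigeonhole application on the decomposition of $\chi^\mu\cdot\chi^\nu$. Evaluating the defining identity $\chi^\mu\chi^\nu = \sum_{\la\vdash n} g(\la,\mu,\nu)\,\chi^\la$ at the identity of $S_n$ gives
$$
f^\mu \ts f^\nu \, = \, \sum_{\la\vdash n} \. g(\la,\mu,\nu)\ts f^\la\ts.
$$
The hypotheses supply the lower bound $f^\mu f^\nu \ge \rD(n)^2/a^2$, while the sum on the right has only $p(n)$ terms. So by averaging, there exists $\la\vdash n$ with
$$
g(\la,\mu,\nu)\ts f^\la \, \ge \, \frac{\rD(n)^2}{a^2 \ts p(n)}\ts.
$$

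First I would combine this pigeonhole estimate with the symmetric upper bound $g(\la,\mu,\nu) \le f^\la$ from \eqref{eq:Kron-upper-groups} and \eqref{eq:Kron-sym-groups}. Substituting into the inequality above yields $(f^\la)^2 \ge \rD(n)^2/(a^2 p(n))$, which gives the first conclusion
$$
f^\la \, \ge \, \frac{\rD(n)}{a\ts \sqrt{p(n)}}\ts.
$$
Conversely, using the trivial bound $f^\la \le \rD(n)$ back in the pigeonhole inequality gives
$$
g(\la,\mu,\nu) \, \ge \, \frac{\rD(n)^2}{a^2 \ts p(n)\ts f^\la} \, \ge \, \frac{\rD(n)}{a^2\ts p(n)}\ts,
$$
which is the second conclusion. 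So the same $\la$ furnished by pigeonhole satisfies both inequalities at once.

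There is no real obstacle here: the whole argument is a one-line pigeonhole plus a clever use of the $f^\la$ bound on both sides (upper to extract the dimension estimate, trivial upper bound to extract the Kronecker estimate). The only point worth emphasizing in the write-up is that the symmetry \eqref{eq:Kron-sym-groups} is essential for the upper bound $g(\la,\mu,\nu)\le f^\la$, since a priori the decomposition of $\chi^\mu\chi^\nu$ only forces $g(\la,\mu,\nu)\le \min\{f^\mu,f^\nu\}$; the symmetric role of $\la$ is what upgrades this to $g(\la,\mu,\nu)\le f^\la$ and lets the single pigeonhole step produce both claimed inequalities simultaneously.
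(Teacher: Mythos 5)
Your proof is correct and is essentially identical to the paper's: both use the evaluation $f^\mu f^\nu=\sum_\la g(\la,\mu,\nu)f^\la$, take the largest term among the $p(n)$ summands, and then apply $g(\la,\mu,\nu)\le f^\la\le\rD(n)$ on each side to extract the two conclusions. Nothing to add.
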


\begin{proof}
Let $\la\vdash n$ be the partition of the largest term in the RHS of
$$
\frac{\rD(n)^2}{a^2} \, \le \, f^\mu \cdot f^\nu \. = \. \sum_{\la\vdash n} \. g(\la,\mu,\nu)\. f^\la\ts.
$$
On the one hand,
$$
g(\la,\mu,\nu) \, \ge \, \frac{1}{p(n)\cdot \rD(n)} \ts\cdot \ts \frac{\rD(n)^2}{a^2}\, = \, \frac{\rD(n)}{a^2\ts p(n)}\..
$$
On the other hand,
$$
(f^\la)^2 \, \ge \, g(\la,\mu,\nu)\. f^\la \, \ge \, \frac{1}{p(n)} \ts\cdot \ts\frac{\rD(n)^2}{a^{2}}\.,
$$
which implies the result.
\end{proof}

\medskip

\subsection{Refined Kronecker coefficients}
Fix $\la,\mu\vdash n$.  Define the \emph{largest refined Kronecker coefficient}
$$
\rK(\la,\mu) \. := \. \max_{\nu \vdash n}  \. g(\la,\mu,\nu)\.
$$
Clearly, $\rK(\la,\mu) \le \rK(n)$.

\begin{prop}\label{p:Kron-refined-groups}  For all $\la,\mu\vdash n$, we have:
$$
\frac{f^\la \. f^\mu}{\sqrt{p(n)\. n!}} \, \le \,\rK(\la,\mu) \, \le \, \min\bigl\{f^\la,\ts f^\mu\bigr\}.
$$
\end{prop}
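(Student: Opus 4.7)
The upper bound is immediate: by the symmetries \eqref{eq:Kron-sym-groups} and the bound \eqref{eq:Kron-upper-groups}, for every $\nu \vdash n$ we have $g(\la,\mu,\nu) \le \min\{f^\la, f^\mu\}$. Taking the maximum over $\nu$ gives $\rK(\la,\mu) \le \min\{f^\la, f^\mu\}$.

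For the lower bound, the plan is to exploit the defining identity $\chi^\la \chi^\mu = \sum_\nu g(\la,\mu,\nu)\ts \chi^\nu$. Evaluating at the identity element gives the dimension identity
$$f^\la \cdot f^\mu \, = \, \sum_{\nu \vdash n} \. g(\la,\mu,\nu)\. f^\nu.$$
I would then apply Cauchy--Schwarz to this sum:
$$f^\la \cdot f^\mu \, \le \, \sqrt{\sum_{\nu \vdash n} \. g(\la,\mu,\nu)^2} \,\. \sqrt{\sum_{\nu \vdash n} \. \bigl(f^\nu\bigr)^2}\ts,$$
and use the Burnside identity \eqref{eq:Burnside} to replace the second factor by $\sqrt{n!}$. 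This yields
$$\sum_{\nu \vdash n} \. g(\la,\mu,\nu)^2 \, \ge \, \frac{(f^\la \. f^\mu)^2}{n!}\..$$

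Finally, since the sum has at most $p(n)$ terms, the largest term satisfies
$$\rK(\la,\mu)^2 \, = \, \max_{\nu \vdash n} \. g(\la,\mu,\nu)^2 \, \ge \, \frac{1}{p(n)} \sum_{\nu \vdash n} \. g(\la,\mu,\nu)^2 \, \ge \, \frac{(f^\la \. f^\mu)^2}{p(n)\. n!}\ts,$$
which gives the claimed lower bound after taking square roots. There is no real obstacle here; the only nuance is choosing Cauchy--Schwarz plus Burnside rather than a cruder ``max times sum of dimensions'' bound, which would lose a factor of $\sqrt{n!/p(n)}$ and give a substantially weaker estimate.
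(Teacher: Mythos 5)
Your proof is correct and is essentially the paper's own argument: the same Cauchy--Schwarz estimate on the vectors $\bigl(g(\la,\mu,\nu)\bigr)$ and $\bigl(f^\nu\bigr)$ combined with the Burnside identity, followed by bounding the maximal term by the average over the $p(n)$ summands, and the same use of \eqref{eq:Kron-upper-groups} (with the symmetries) for the upper bound. No differences worth noting.
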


\begin{proof} Let
$$
A(\la,\mu) \, := \, \sum_{\nu\vdash n} \. g(\la,\mu,\nu)^2\ts.
$$
Recall Burnside's identity~\eqref{eq:Burnside} and
$$
\sum_{\nu \vdash n} \. g(\la,\mu,\nu) \. f^\nu \, = \, f^\mu\ts f^\la\ts.
$$
Now apply the Cauchy--Schwarz inequality to vectors \.
$\bigl(f^\nu\bigr), \bigl(g(\la,\mu,\nu)\bigr)\in \rr^{p(n)}$,
both indexed by $\nu \vdash n$.  We obtain:
\begin{equation}\label{eq:A-la-mu}
A(\la,\mu) \, \ge \, \frac{(f^\la)^2\ts (f^\mu)^2}{n!}\..
\end{equation}
Therefore, for the maximal term in the summation $A(\la,\mu)$, we have:
$$
\max_{\nu \vdash n} \ts g(\la,\mu,\nu) \, \ge \,
\frac{f^\la\ts f^\mu}{\sqrt{p(n)\ts n!}}\,.
$$
This gives the lower bound.  The upper bound follows from~\eqref{eq:Kron-upper-groups}.
\end{proof}

\begin{cor}  \label{c:Kon-max-refined-groups}
For all $n\in \nn$, have:
$$
\frac{\rD(n)^2}{\sqrt{p(n)\. n!}}  \, \le \,  \rK(n) \, \le \, \rD(n)\..
$$
\end{cor}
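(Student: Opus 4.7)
The plan is to obtain both bounds as immediate consequences of results already proved in this subsection, with essentially no new work required. The upper bound $\rK(n) \le \rD(n)$ has already been established in Proposition~\ref{p:Kon-max-groups} (it follows directly from the general inequality~\eqref{eq:Kron-upper-groups} specialized to the dimension bound). So the only task is the lower bound.

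For the lower bound, I would apply Proposition~\ref{p:Kron-refined-groups} in the diagonal case $\la = \mu$. Specifically, let $\la \vdash n$ be a partition achieving $f^\la = \rD(n)$. Then Proposition~\ref{p:Kron-refined-groups} gives
$$
\rK(n) \, \ge \, \rK(\la,\la) \, \ge \, \frac{f^\la \. f^\la}{\sqrt{p(n)\.n!}} \, = \, \frac{\rD(n)^2}{\sqrt{p(n)\.n!}}\ts,
$$
which is exactly the desired lower bound. Equivalently, one could take any $\la,\mu$ with $f^\la = f^\mu = \rD(n)$ to get the same conclusion, but the diagonal choice is the most economical.

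There is no real obstacle here: the statement is a direct corollary extracted from Proposition~\ref{p:Kron-refined-groups} by making the optimal choice of $\la$ and $\mu$. The only ``content'' is recognizing that the refined bound $f^\la f^\mu/\sqrt{p(n)\ts n!}$ is maximized when both $f^\la$ and $f^\mu$ are as large as possible, namely equal to $\rD(n)$. Combined with the Vershik--Kerov estimates~\eqref{eq:VK} on $\rD(n)$ and the Hardy--Ramanujan asymptotics for $p(n)$, this shows that $\rK(n) = \sqrt{n!}\ts e^{-O(\sqrt{n})}$, matching Stanley's asymptotic~$(\ast)$ of Theorem~\ref{t:stanley-asy} up to the implied constant.
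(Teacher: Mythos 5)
Your proof is correct and is essentially identical to the paper's: the upper bound is quoted from Proposition~\ref{p:Kon-max-groups}, and the lower bound comes from Proposition~\ref{p:Kron-refined-groups} with $\la,\mu$ chosen so that $f^\la=f^\mu=\rD(n)$ (the paper does not insist on $\la=\mu$, but this is an immaterial difference). Nothing is missing.
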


\begin{proof}  In Proposition~\ref{p:Kron-refined-groups}, take
\ts $\la,\mu \vdash n$ \ts such that \ts $f^\la=f^\mu = \rD(n)$.
This gives the lower bound.  The upper bound is given in
Proposition~\ref{p:Kon-max-groups}.
\end{proof}

Note that the lower bound in Corollary~\ref{c:Kon-max-refined-groups}
is slightly stronger than the lower bound in Proposition~\ref{p:Kon-max-groups}.
Now the bounds~\eqref{eq:VK} give us:

\begin{cor} We have:
$$
\sqrt{n!} \. e^{-3\ts\co\ts \sqrt{n}\ts (1+o(1))} \, \le \, \rK(n)
\, \le \,  \sqrt{n!} \. e^{-\ct\ts \sqrt{n}\ts (1+o(1))}\ts,
$$
where $\co,\ct>0$ are given in~\eqref{eq:VK-const}.
\end{cor}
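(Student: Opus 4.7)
The plan is to combine Corollary~\ref{c:Kon-max-refined-groups} with the two estimates at our disposal: the Vershik--Kerov bounds~\eqref{eq:VK} on $\rD(n)$, and the Hardy--Ramanujan asymptotic for $p(n)$ from~$\S$\ref{ss:def-part}. Both bounds are logarithmic at scale $\sqrt{n}$, so all the arithmetic happens in the exponent.

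For the upper bound, I would simply chain \ts $\rK(n)\le \rD(n)$ \ts from Corollary~\ref{c:Kon-max-refined-groups} with the upper bound in~\eqref{eq:VK}, which immediately yields
\[
\rK(n) \, \le \, \rD(n) \, \le \, \sqrt{n!}\,e^{-\ct\sqrt{n}(1+o(1))}.
\]
No further work is needed here.

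For the lower bound, I would start from
\[
\rK(n) \, \ge \, \frac{\rD(n)^2}{\sqrt{p(n)\,n!}}\,,
\]
insert \ts $\rD(n)\ge \sqrt{n!}\,e^{-\co\sqrt{n}(1+o(1))}$ \ts from~\eqref{eq:VK}, and cancel one factor of $\sqrt{n!}$:
\[
\rK(n) \, \ge \, \frac{\sqrt{n!}}{\sqrt{p(n)}}\,e^{-2\co\sqrt{n}(1+o(1))}.
\]
The Hardy--Ramanujan formula gives \ts $\log\sqrt{p(n)} = \tfrac{\pi}{2}\sqrt{2n/3}\,(1+o(1))$. The one arithmetic check worth doing explicitly is
\[
\frac{\pi}{2}\sqrt{\frac{2}{3}} \, = \, \frac{\pi}{\sqrt{6}} \, = \, \co\ts,
\]
so the total exponent becomes $-(2\co+\co)\sqrt{n}(1+o(1)) = -3\co\sqrt{n}(1+o(1))$, matching the claim.

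There is no real obstacle: both ingredients are already established (Corollary~\ref{c:Kon-max-refined-groups} and~\eqref{eq:VK}), and the Hardy--Ramanujan asymptotic contributes a term of exactly the same order $\sqrt{n}$, with the constant conveniently equal to~$\co$. The only thing to be careful about is absorbing the polynomial prefactor in Hardy--Ramanujan (the $\tfrac{1}{4n\sqrt 3}$) into the $(1+o(1))$ in the exponent, which is routine since $\log n = o(\sqrt{n})$.
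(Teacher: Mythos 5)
Your proposal is correct and is exactly the argument the paper intends: the corollary is stated as an immediate consequence of Corollary~\ref{c:Kon-max-refined-groups} together with the Vershik--Kerov bounds~\eqref{eq:VK}, with the Hardy--Ramanujan asymptotic supplying the extra factor \ts $e^{-\co\sqrt{n}(1+o(1))}$ \ts from $\sqrt{p(n)}$, since \ts $\tfrac{\pi}{2}\sqrt{2/3}=\pi/\sqrt{6}=\co$. Your arithmetic in the exponent and the remark that the polynomial prefactor is absorbed into the $(1+o(1))$ are both accurate, so nothing is missing.
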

It would be interesting to improve either of these two bounds.

\smallskip

\begin{rem}{\rm
The following asymptotic formula is well known and easy to see~\cite[\href{https://oeis.org/A110143}{A110143}]{OEIS}:
\begin{equation}\label{eq:Kron-squares}
\rA(n) \, = \, \sum_{\al\vdash n} \ts z_\al  \, = \, n! \ts \left(1 \. + \. \frac{2}{n^2} \. + \. O\left(\frac{1}{n^3}\right)\right).
\end{equation}
This shows that the lower bounds~\eqref{eq:A-la-mu} in the proof of
Proposition~\ref{p:Kron-refined-groups}
are also asymptotically sharp.  Indeed, in the notation of the
proof of the proposition, we have:
$$
\rA(n) \, = \, \sum_{\la,\ts \mu,\nu \vdash n} \. g(\la,\mu,\nu)^2 \, = \,
\sum_{\la,\ts \mu\vdash n} \. A(\la,\mu) \, \ge \, \sum_{\la,\ts \mu\vdash n} \. \frac{(f^\la)^2 \ts (f^\mu)^2}{n!} \, = \, n!\.,
$$
which implies that the inequalities~\eqref{eq:A-la-mu} are sharp up to a small (additive) error.
}\end{rem}

\medskip

\subsection{Shape of the largest Kronecker coefficients}

\begin{proof}[Proof of Theorem~\ref{t:kron-intro}]
The theorem is a special case of Theorem~\ref{t:Kron-main-groups}, where \ts
$a=e^{-O(\sqrt{n})}$.  Indeed,
both parts of Theorem~\ref{t:kron-intro} either assume or require
$$g\bigl(\la^{(n)},\mu^{(n)},\nu^{(n)}\bigr), \, f^{\la^{(n)}}, \,
f^{\mu^{(n)}}, \,  f^{\nu^{(n)}} \, \ge \, \sqrt{n!} \.\ts e^{-\al\ts\sqrt{n}}
$$
for some $\al\ge 0$, implied by the $O(\cdot)$ notation in the theorem.
Theorem~\ref{t:Kron-main-groups} makes this dependence precise.
\end{proof}

\smallskip

Note that the proof of Theorem~\ref{t:Kron-main-groups} is simple enough
to imply a quantitative version of Biane's concentration of characters
result~\cite{Bia}.

\begin{prop} \label{p:Kron-weak-Biane}
Let $\al, \be, \ve >0$ such that $\al+\be >c_1$. Let \ts
$\la,\mu\vdash n$ such that
$$
f^\la, \, f^\mu \, > \, \sqrt{n!} \.  e^{-\al \ts \sqrt{n}}\ts.
$$
Then:
$$
\sum_{\nu\in \cP_\be(n)} \. g(\la,\mu,\nu) \ts f^\nu \, > \, (1-\ve)\ts f^{\la}\ts f^{\mu} \quad
\text{for all \ts $n$ \ts large enough,}
$$
where the summation is over the set \ts $\cP_\be(n)$ \ts of $\nu$ such
that
$$
f^\nu \, > \, \sqrt{n!} \.  e^{-\be \ts \sqrt{n}}\ts.
$$
\end{prop}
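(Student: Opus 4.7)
By the identity $\sum_{\nu\vdash n} g(\lambda,\mu,\nu)\,f^\nu = f^\lambda f^\mu$, the claim is equivalent to showing the ``bad tail''
\[ S_{\text{bad}} \;:=\; \sum_{\nu\notin\mathcal{P}_\beta(n)} g(\lambda,\mu,\nu)\,f^\nu \]
is less than $\varepsilon f^\lambda f^\mu$ for all $n$ large. The plan is to adapt the Cauchy--Schwarz argument of Proposition~\ref{p:Kron-refined-groups} and Theorem~\ref{t:Kron-main-groups} to this restricted sum.

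By Cauchy--Schwarz,
\[ S_{\text{bad}}^2 \;\leq\; A(\lambda,\mu)\cdot\sum_{\nu\notin\mathcal{P}_\beta}(f^\nu)^2. \]
For the first factor, the character formula $A(\lambda,\mu) = \frac{1}{n!}\sum_w \chi^\lambda(w)^2\chi^\mu(w)^2$ combined with $|\chi^\lambda(w)|\leq f^\lambda$ yields $A(\lambda,\mu) \leq \min(f^\lambda,f^\mu)^2$. For the second factor, I would use $\max_{\nu\notin\mathcal{P}_\beta}f^\nu \leq \sqrt{n!}\,e^{-\beta\sqrt n}$ together with the Cauchy--Schwarz consequence of Burnside's identity~\eqref{eq:Burnside}, namely $\sum_\nu f^\nu \leq \sqrt{p(n)\cdot n!}$, to obtain
\[ \sum_{\nu\notin\mathcal{P}_\beta}(f^\nu)^2 \;\leq\; \max_{\nu\notin\mathcal{P}_\beta}f^\nu\cdot\sum_\nu f^\nu \;\leq\; n!\,\sqrt{p(n)}\,e^{-\beta\sqrt n}. \]
The Hardy--Ramanujan asymptotic $\sqrt{p(n)} = e^{c_1\sqrt n(1+o(1))}$---which is precisely where the constant $c_1 = \pi/\sqrt 6$ in the hypothesis originates, since $\tfrac{1}{2}\pi\sqrt{2/3} = \pi/\sqrt 6$---then bounds this by $n!\,e^{(c_1-\beta)\sqrt n(1+o(1))}$.

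Combining, and using $\min(f^\lambda,f^\mu)/(f^\lambda f^\mu) = 1/\max(f^\lambda,f^\mu) \leq e^{\alpha\sqrt n}/\sqrt{n!}$ from the hypothesis,
\[ \frac{S_{\text{bad}}}{f^\lambda f^\mu} \;\leq\; e^{\alpha\sqrt n}\,e^{(c_1-\beta)\sqrt n/2\,(1+o(1))}, \]
which decays to $0$ as $n\to\infty$ provided the exponent is negative, securing $S_{\text{bad}} < \varepsilon f^\lambda f^\mu$ for all $n$ large. The main technical obstacle is the precise exponent bookkeeping needed to bring the threshold exactly into the form $\alpha+\beta>c_1$: the naive Cauchy--Schwarz as outlined delivers a threshold with a larger coefficient on $\alpha$, so sharpening it requires either a stronger estimate of $A(\lambda,\mu)$ that exploits both lower bounds $f^\lambda,f^\mu\geq\sqrt{n!}\,e^{-\alpha\sqrt n}$ jointly (rather than only via $\max(f^\lambda,f^\mu)$), or a pigeonhole argument isolating the largest term of $S_{\text{bad}}$ as in Theorem~\ref{t:Kron-main-groups} and combining with the symmetric bound $g(\lambda,\mu,\nu)\leq f^\nu$ from~\eqref{eq:Kron-upper-groups} and~\eqref{eq:Kron-sym-groups}.
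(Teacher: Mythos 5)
Your overall strategy---reduce to showing the complementary sum $S_{\text{bad}}=\sum_{\nu\notin\mathcal{P}_\beta(n)}g(\lambda,\mu,\nu)f^\nu$ is $<\varepsilon f^\lambda f^\mu$---is exactly the paper's, but your proposal does not close, and you say so yourself: as set up, Cauchy--Schwarz delivers the condition $\alpha+c_1/2-\beta/2<0$, i.e.\ $\beta>2\alpha+c_1$, which is a strictly stronger hypothesis than any reasonable reading of the statement, and the two repairs you gesture at are left unexecuted. The second repair you mention is in fact the paper's entire proof, and it is a one-liner: by the symmetry \eqref{eq:Kron-sym-groups} and the bound \eqref{eq:Kron-upper-groups} one has $g(\lambda,\mu,\nu)\le f^\nu$, so each bad term satisfies $g(\lambda,\mu,\nu)f^\nu\le (f^\nu)^2\le n!\,e^{-2\beta\sqrt{n}}$, whence
$$
S_{\text{bad}} \,\le\, p(n)\cdot n!\,e^{-2\beta\sqrt{n}} \,=\, n!\,e^{-2(\beta-c_1)\sqrt{n}\,(1+o(1))},
$$
to be compared with $\sum_{\nu\vdash n}g(\lambda,\mu,\nu)f^\nu=f^\lambda f^\mu>n!\,e^{-2\alpha\sqrt{n}}$. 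No Cauchy--Schwarz and no $A(\lambda,\mu)$ are needed. Incidentally, your loss of a factor $2$ on $\beta$ comes only from bounding $\sum_{\nu\notin\mathcal{P}_\beta}(f^\nu)^2$ by $\bigl(\max f^\nu\bigr)\cdot\sum_\nu f^\nu$, which extracts a single factor $e^{-\beta\sqrt{n}}$; using $p(n)\bigl(\max f^\nu\bigr)^2\le p(n)\,n!\,e^{-2\beta\sqrt{n}}$ instead, your route recovers the same threshold as the direct argument.

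A second point you should not lose sleep over: the threshold you were straining to hit, $\alpha+\beta>c_1$, is not produced by either argument, and cannot be, because under it the statement is false as literally written. Both the termwise bound above and the corrected Cauchy--Schwarz give the condition $\beta-\alpha>c_1$ (equivalently $\beta>\alpha+c_1$). If one only assumes $\alpha+\beta>c_1$, one may take $\beta<c_2$, in which case $\mathcal{P}_\beta(n)$ is \emph{empty} for all large $n$ by the Vershik--Kerov upper bound in \eqref{eq:VK}, and the conclusion fails trivially. (The paper's own displayed computation also has a sign slip, writing $e^{-2(\beta+c_1)\sqrt{n}(1+o(1))}$ where $e^{-2(\beta-c_1)\sqrt{n}(1+o(1))}$ is meant.) So the correct reading of the hypothesis is $\beta-\alpha>c_1$, and the termwise bound $g(\lambda,\mu,\nu)\le f^\nu$ already achieves it; your job is to execute that step rather than sharpen the Cauchy--Schwarz bookkeeping.
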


The proposition is phrased in the form to underscore the similarity
with Thm~1.4.1 in~\cite{Bia}.  Of course, Biane's result is
more general as it applied to partitions with other limits shapes~$\la$,
which all must have \ts $f^\la = \sqrt{n!} \ts e^{O(n)}$, see~\cite{MPP}.

\begin{proof}[Proof of Proposition~\ref{p:Kron-weak-Biane}]
We have:
$$
\sum_{\nu\vdash n} \. g(\la,\mu,\nu) \ts f^\nu \, =  \,f^\la \cdot f^\mu \, > \, n! \.  e^{-2 \ts \al \ts \sqrt{n}}
$$
while
$$
\sum_{\nu\notin \cP_\be(n)} \. g(\la,\mu,\nu) \ts f^\nu \, \le \,
\left(\sqrt{n!} \.  e^{-\be \ts \sqrt{n}}\right)^2 p(n) \, = \,
n! \. e^{-2(\be+c_1)\ts\sqrt{n}(1+o(1))}\ts.
$$
This implies the result. \end{proof}

\medskip

\subsection{Tensor square conjectures}  \label{ss:Kron-saxl}
It was suggested in~\cite{HSTZ}
that for all $n$ large enough, there exists $\la\vdash n$, such that
$g(\la,\la,\nu)>0$ for all $\nu\vdash n$.  In other words,
the tensor square \ts $\chi^\la \times \chi^\la$ \ts contains all
characters~$\chi^n$.  One would want to start with
the Plancherel shape $\la$, for which by by Proposition~\ref{p:Kron-weak-Biane}
the tensor square has large average isotypic components.
Note however, that the Plancherel shape is a large
class of partitions, making them hard to study.  For example, one needs
$\la = \la'$ since otherwise $g(\la,\la,1^n)=0$, and there are other
similar small constraints $\la$ must satisfy.

Saxl conjectured~\cite{PPV} that $\la = (k-1,k-2,\ldots,1) \vdash n=\binom{k}{2}$
for large enough~$k$ would have \ts $g(\la,\la,\nu)>0$ \ts for all $\nu\vdash n$.
This is now known for \emph{most} \ts $\nu\vdash n$~\cite{LuS}, but there are
only very weak lower bounds on  \ts $g(\la,\la,\nu)$.  Even when \ts $\la=\nu$,
the Bessenrodt--Behns lower bound \ts $g(\la,\la,\la)\ge 1$ \ts
is the best bound we have in this case~\cite{BB}.
In fact, the largest known values of Kronecker coefficients
for explicitly given families of partitions are subexponential~\cite{PP}.

This all makes the following conjecture both plausible and out
of reach.  Such a result would provide an ultimate extension in
the converse part of Theorem~\ref{t:kron-intro}.

\begin{conj}\label{conj:kron-sharp}
Let $\{\la^{(n)}\vdash n\}$, $\{\mu^{(n)}\vdash n\}$, $\{\nu^{(n)}\vdash n\}$ be
three Plancherel partitions sequences.  Then:
$$
g\bigl(\la^{(n)},\mu^{(n)},\nu^{(n)}\bigr) \. = \. \sqrt{n!} \. e^{-O(n)}\ts.
$$
\end{conj}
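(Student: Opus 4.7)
The plan is to exploit the character identity
$$g(\la,\mu,\nu) \. = \. \sum_{\al \vdash n} \. \frac{1}{z_\al} \. \chi^\la(\al)\. \chi^\mu(\al)\. \chi^\nu(\al),$$
isolate the identity class $\al=(1^n)$, and bound the remainder. The identity term alone contributes $f^\la f^\mu f^\nu/n!$, which for three Plancherel sequences is at least $\sqrt{n!}\,e^{-3\co\sqrt n}$ --- a bound vastly stronger than the $e^{-O(n)}$ target. Hence the content of the conjecture is really that the sum over non-identity classes does not cancel this main term more severely than a multiplicative $e^{-O(n)}$ factor.

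To that end I would invoke a character estimate of Roichman--Fomin-Lulov--Larsen-Shalev type, which for a VKLS partition yields
$$|\chi^\la(\al)| \. \le \. f^\la \cdot q(\la)^{|\supp(\al)|}$$
with $q(\la)$ controlled by $\max\{\la_1,\la_1'\}/n = O(n^{-1/2})$. By Theorem~\ref{t:VKLS} every Plancherel sequence is (strongly) VKLS in the sense of Section~\ref{ss:VKLS}, so such a bound applies uniformly to all three of $\la^{(n)},\mu^{(n)},\nu^{(n)}$. Combining these estimates with a class count of the form $\#\{\al : |\supp(\al)|=k\}/z_\al \le n^k/(k!)^2$, one would try to show
$$\Bigl|\sum_{\al\ne (1^n)} \frac{\chi^\la(\al)\,\chi^\mu(\al)\,\chi^\nu(\al)}{z_\al}\Bigr| \. \le \. \frac{f^\la f^\mu f^\nu}{n!} \cdot e^{O(n)},$$
which together with the identity-class contribution delivers the conjectured lower bound $\sqrt{n!}\,e^{-O(n)}$.

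The decisive obstacle is the regime where $|\supp(\al)|$ is of order $n$. The multiplicative bound above tames small-support classes admirably, but it decays only geometrically in $|\supp(\al)|$, while the number of classes with $|\supp(\al)|\sim n$ is enormous and their contributions can carry arbitrary signs. A crude triangle-inequality estimate loses a factor well beyond the $e^{O(n)}$ slack one can afford, and this is precisely why the paper records the conjecture as out of reach. Any successful attack would, I believe, have to replace the triangle inequality with a genuine cancellation argument, in the spirit of Biane's asymptotic character theory --- effectively promoting the concentration-in-average statement of Proposition~\ref{p:Kron-weak-Biane} to a pointwise statement valid across every Plancherel $\nu$. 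As an alternative, one might try to construct, for each Plancherel $\nu$, an explicit highest-weight vector of type $\nu$ inside $\chi^\la\otimes\chi^\mu$ of the required size, along the lines of recent combinatorial attacks on the Saxl tensor square conjecture; however, the best current constructions of this kind only yield subexponential lower bounds (cf.~\cite{PP}), far short of $\sqrt{n!}\,e^{-O(n)}$. I would therefore pursue the Biane route first, since Theorem~\ref{t:Kron-main-groups} already secures the existence of at least one Plancherel $\nu$ for every Plancherel pair $(\la,\mu)$, and the problem is to propagate this to all of them.
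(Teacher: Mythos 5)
You are attempting to prove a statement that the paper itself does not prove: Conjecture~\ref{conj:kron-sharp} is stated as an open conjecture, explicitly described there as ``plausible and out of reach,'' and the surrounding discussion ($\S$\ref{ss:Kron-saxl}) records that even for the most-studied candidate shapes the best known pointwise lower bound is the Bessenrodt--Behns bound $g(\la,\la,\la)\ge 1$~\cite{BB}, with all explicitly known families of large Kronecker coefficients being merely subexponential~\cite{PP}. So there is no proof in the paper to compare against, and your proposal, to its credit, does not claim to supply one: it is a plan whose decisive step you yourself flag as unresolved.

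Concretely, the gap you identify is the real one. Isolating the identity class in $g(\la,\mu,\nu)=\sum_\al z_\al^{-1}\chi^\la(\al)\chi^\mu(\al)\chi^\nu(\al)$ gives the main term $f^\la f^\mu f^\nu/n!$, but the conjecture is exactly the assertion that the remaining classes do not cancel it beyond an $e^{O(n)}$ factor, and no triangle-inequality estimate can establish this: Roichman/Fomin--Lulov/Larsen--Shalev type bounds decay only geometrically in $|\supp(\al)|$ (and degrade, with constants in the exponent, precisely when $|\supp(\al)|$ is of order $n$), while the number of such classes is $e^{\Theta(\sqrt n \log n)}$-many with signs that are not controlled; moreover your intermediate class-count inequality is not well-formed as written, since $z_\al$ varies within the set being counted. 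What the paper actually proves in this direction is much weaker and of a different nature: Proposition~\ref{p:Kron-weak-Biane} and Theorem~\ref{t:Kron-main-groups} give averaged/existential statements (for Plancherel $\la,\mu$ there \emph{exists} a Plancherel $\nu$ with large $g$, and the mass concentrates on large-dimensional $\nu$), obtained by positivity of the expansion $f^\la f^\mu=\sum_\nu g(\la,\mu,\nu)f^\nu$ rather than by character cancellation. Upgrading this to a pointwise bound valid for \emph{every} Plancherel $\nu$ --- the ``Biane route'' you propose --- is precisely the open problem, so your write-up should be presented as a discussion of the obstruction, not as a proof.
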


Below we show that it is possible to have three large characters
with a zero Kronecker coefficient.  This shows that assumptions
in the conjecture cannot be significantly weakened.

\medskip

\subsection{Vanishing Kronecker coefficients}\label{ss:Kron-regev}
Let us present examples of partition families of large
dimensions with zero Kronecker coefficients

\begin{thm}[Regev~\cite{Reg}]
Let $\la,\mu,\nu\vdash n$ such that $\ell(\la)>\mu_1\cdot \nu_1$.
Then $g(\la,\mu,\nu)=0$.
\end{thm}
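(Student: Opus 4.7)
The plan is to prove Regev's bound $\ell(\la) \le \mu_1\ts \nu_1$ via Schur--Weyl duality together with the sign--twist symmetry of Kronecker coefficients. Recall that for a finite--dimensional complex vector space $V$, Schur--Weyl duality gives an $S_n \times \GL(V)$--decomposition
$$
V^{\otimes n} \, \cong \, \bigoplus_{\la \vdash n,\. \ell(\la)\le \dim V} \. \chi^\la \otimes W_\la^V\ts,
$$
where $W_\la^V$ is the irreducible $\GL(V)$--module of highest weight $\la$. In particular, $\chi^\mu$ appears as an $S_n$--summand of $V^{\otimes n}$ if and only if $\ell(\mu) \le \dim V$.

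First I would establish the weaker statement: \emph{if} $g(\la,\mu,\nu)>0$, \emph{then} $\ell(\la) \le \ell(\mu)\.\ell(\nu)$. Pick $V=\cc^{\ell(\mu)}$ and $U=\cc^{\ell(\nu)}$ so that $\chi^\mu$ embeds into $V^{\otimes n}$ and $\chi^\nu$ embeds into $U^{\otimes n}$ as $S_n$--summands. The natural reordering map
$$
V^{\otimes n} \otimes U^{\otimes n} \, \xrightarrow{\,\sim\,} \, (V\otimes U)^{\otimes n}
$$
is $S_n$--equivariant, where $S_n$ acts diagonally on the left and by permuting the $n$ tensor factors $V\otimes U$ on the right. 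Hence the Kronecker product $\chi^\mu\ts\chi^\nu$, sitting as an $S_n$--summand of the left--hand side, embeds into $(V\otimes U)^{\otimes n}$. Applying Schur--Weyl duality to $V\otimes U$, every irreducible $S_n$--constituent $\chi^\la$ of this tensor space satisfies $\ell(\la) \le \dim(V\otimes U) = \ell(\mu)\.\ell(\nu)$, as required.

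To convert this into Regev's exact bound I would use the sign--twist identity $\chi^{\mu'} = \chi^\mu \cdot \mathrm{sgn}$. Since $\mathrm{sgn}^2$ is trivial,
$$
g(\la,\mu,\nu) \, = \, \bigl\<\chi^\la,\ts \chi^\mu\ts \chi^\nu\bigr\> \, = \, \bigl\<\chi^\la,\ts \chi^{\mu'}\ts \chi^{\nu'}\bigr\> \, = \, g(\la,\mu',\nu')\ts.
$$
Applying the weaker inequality to the triple $(\la,\mu',\nu')$ yields $\ell(\la) \le \ell(\mu')\.\ell(\nu') = \mu_1\.\nu_1$, and the theorem follows by contrapositive.

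I do not expect a genuine obstacle here. The only point requiring a little care is verifying that the canonical vector--space isomorphism $V^{\otimes n}\otimes U^{\otimes n}\cong (V\otimes U)^{\otimes n}$ respects the two $S_n$--actions, so that the embedding of $\chi^\mu\ts\chi^\nu$ as a Kronecker product really lands inside the Schur--Weyl decomposition of $(V\otimes U)^{\otimes n}$. Once this compatibility is recorded, both the dimension bound on $\ell(\la)$ and the sign--twist reduction from $(\mu,\nu)$ to $(\mu',\nu')$ are formal.
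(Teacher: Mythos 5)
Your argument is correct, and it is worth noting that the paper does not prove this statement at all: it is quoted directly from Regev's paper, whose original derivation goes through polynomial identities (the $A\otimes B$ theorem for Capelli identities and hook Schur functions) rather than through any argument reproduced in this text. Your route is a clean, self-contained alternative: Schur--Weyl duality for $V=\cc^{\ell(\mu)}$ and $U=\cc^{\ell(\nu)}$, the $S_n$-equivariance of the reordering isomorphism $V^{\otimes n}\otimes U^{\otimes n}\cong (V\otimes U)^{\otimes n}$ (which you correctly flag as the only point needing verification, and which holds since the diagonal action on the left corresponds to permuting the factors $V\otimes U$ on the right), giving the standard constraint $\ell(\lambda)\le \ell(\mu)\,\ell(\nu)$ whenever $g(\lambda,\mu,\nu)>0$; then the sign-twist symmetry $g(\lambda,\mu,\nu)=g(\lambda,\mu',\nu')$, valid because $\chi^{\mu'}\chi^{\nu'}=\chi^\mu\chi^\nu\cdot\mathrm{sgn}^2=\chi^\mu\chi^\nu$, converts this into $\ell(\lambda)\le \ell(\mu')\,\ell(\nu')=\mu_1\nu_1$, which is exactly Regev's bound by contrapositive. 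What your approach buys is elementarity and flexibility: the same two ingredients immediately yield the companion constraints (e.g.\ twisting only one factor gives $\lambda_1\le \mu_1\,\ell(\nu)$ and similar bounds on the support of Kronecker products), whereas Regev's original method is embedded in the PI-algebra framework. So: correct proof, genuinely different from the cited source, and entirely adequate for the role the theorem plays in the paper (producing vanishing examples with large dimensions).
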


\begin{cor} There is a partition family $\{\la^{(n)}\}$,
s.t.\
$$g\bigl(\la^{(n)},\la^{(n)},\la^{(n)}\bigr) \. = \. 0\quad \
\text{and} \quad f^\la \. = \, \sqrt[3]{n!} \. e^{O(n)}.
$$
\end{cor}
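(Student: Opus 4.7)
The plan is to take $\la^{(n)}$ to be a tall, nearly rectangular partition of shape roughly $a \times b$ with $a \sim n^{1/3}$ and $b \sim n^{2/3}$. Concretely, choose $a := \lfloor n^{1/3} \rfloor - \delta_n$ with $\delta_n \in \{0,1\}$ small enough to ensure $a^3 + a \le n$, write $n = ab + r$ with $0 \le r < a$, and set $\la^{(n)} := (a^b, r)$ (dropping the last part when $r = 0$). Then $\la_1 = a$ and $\ell(\la^{(n)}) \ge b \ge a^2 + 1 > a^2 = \la_1^2$, so Regev's theorem applied with $\mu = \nu = \la^{(n)}$ immediately gives $g(\la^{(n)}, \la^{(n)}, \la^{(n)}) = 0$.

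For the dimension estimate, I would apply the hook-length formula~\eqref{eq:HLF}. For the pure rectangle $(a^b)$ the hook multiset is explicit: value $k \in \{1, \ldots, a+b-1\}$ occurs with multiplicity $\min(k, a, a+b-k)$. Splitting $\sum_{(i,j)} \log h_{ij}$ into the ranges $k \in [1,a]$, $(a, b]$, and $(b, a+b-1]$, the first and third contribute $O(n^{2/3}\log n)$ each, while the middle range gives, via Stirling applied to $\log a!$ and $\log b!$,
\[
a \sum_{k=a+1}^{b} \log k \, = \, a\bigl(b \log b - a \log a - (b - a)\bigr) + O(a \log b) \, = \, \tfrac{2}{3}\ts n \log n \. + \. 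O(n).
\]
Combined with $\log n! = n \log n - n + O(\log n)$, the hook-length formula yields $\log f^{(a^b)} = \tfrac{1}{3} n \log n + O(n) = \log \sqrt[3]{n!} + O(n)$.

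Passing from the rectangle $(a^b)$ to $\la^{(n)} = (a^b, r)$ with $r < a$ alters only the $r$ cells of the extra row and shifts the hooks in the first $r$ columns by exactly one; a direct inspection shows the net change to $\log f$ is $O(a \log n) = O(n^{1/3}\log n)$, well within the $O(n)$ error. The only obstacle is the routine bookkeeping of these hook sums, but no conceptual difficulty arises since the rectangular hook multiset is fully explicit.
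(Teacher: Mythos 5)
Your proof is correct and takes essentially the same route as the paper: Regev's vanishing criterion applied to a near-rectangular partition with $\sim n^{1/3}$ columns and $\sim n^{2/3}$ rows (so that $\ell(\la)>\la_1^2$), followed by a hook-length computation giving $\log f^\la = \tfrac13 n\log n + O(n)$. The only differences are cosmetic: your construction works for every $n$ rather than the subsequence $n=(a-1)a^2$ used in the paper, and you supply the hook-sum bookkeeping that the paper omits.
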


\begin{proof}
Let \ts $\la=(a^2)^{a-1} \vdash n$, where \ts $n=(a-1)a^2$.
Note that \ts $\ell(\la) > (\la_1)^2$.
Then \ts $g(\la,\la,\la)=0$ \ts by Regev's theorem.
The degree follows from the hook-length formula~\eqref{eq:HLF}.
We omit the details.
\end{proof}

\smallskip

\begin{thm}[Dvir~\cite{Dvir}]
Let $\la,\mu,\nu\vdash n$ such that $\ell(\la)>|\mu \cap \nu|$.
Then $g(\la,\mu,\nu)=0$.
\end{thm}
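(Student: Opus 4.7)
The plan is to bound the multiplicity of $\chi^\lambda$ in $\chi^\mu \otimes \chi^\nu$ by embedding this tensor product inside a sum of Young permutation modules and then exploiting the dominance condition coming from Young's rule. Since $\chi^\mu$ is a constituent of $M^\mu := \mathrm{Ind}_{S_\mu}^{S_n}(\mathbf{1})$, and similarly for $\chi^\nu$, we have $\chi^\mu \otimes \chi^\nu \subseteq M^\mu \otimes M^\nu$. The Mackey double-coset formula, together with the standard bijection between $S_\mu \backslash S_n / S_\nu$ and $\mathbb{Z}_{\geq 0}$-matrices $A$ with row sums $\mu$ and column sums $\nu$, yields
\begin{equation*}
M^\mu \otimes M^\nu \; \cong \; \bigoplus_A \. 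M^{\mathrm{sort}(A)}\ts,
\end{equation*}
where $\mathrm{sort}(A)$ denotes the partition obtained by sorting the nonzero entries of $A$ in weakly decreasing order, and $M^{\mathrm{sort}(A)} = \mathrm{Ind}_{S_A}^{S_n}(\mathbf{1})$ for $S_A = \prod_{i,j} S_{A_{ij}}$. By Young's rule, $\chi^\lambda$ appears in $M^{\mathrm{sort}(A)}$ with multiplicity $K_{\lambda,\mathrm{sort}(A)}$, which vanishes unless $\lambda \unrhd \mathrm{sort}(A)$; conjugating dominance then forces $\ell(\lambda) \leq \ell(\mathrm{sort}(A))$, the number of nonzero entries of $A$. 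In particular, if $g(\lambda,\mu,\nu)>0$, then some such matrix $A$ must have support of size at least $\ell(\lambda)$.

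The remaining and decisive step is to sharpen ``some $A$ has support of size at least $\ell(\lambda)$'' to the claim $\ell(\lambda) \leq |\mu \cap \nu|$. Mackey alone only uses the inclusion $\chi^\mu \otimes \chi^\nu \subseteq M^\mu \otimes M^\nu$; the ambient permutation module carries extra constituents whose contributions must cancel to leave behind the true Kronecker multiplicity. I would refine the count by inclusion-exclusion on the dominance lattice, expressing $\chi^\mu$ (and $\chi^\nu$) as the signed combination of Young permutation characters $M^\sigma$ indexed by $\sigma \unrhd \mu$ via the Jacobi-Trudi determinantal formula, substituting into the Mackey decomposition, and collecting like terms. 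The target bound $|\mu \cap \nu| = \sum_i \min(\mu_i,\nu_i)$ is precisely the number of cells $(i,j)$ with $j \leq \mu_i$ and $j \leq \nu_i$, which is the geometrically natural quantity governing the common region of the two Young diagrams and so a plausible cutoff for which supports are ``admissible'' after cancellation.

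The main obstacle is making this cancellation explicit. The natural device is a sign-reversing involution on pairs (matrix $A$, choice of sign-term from the Jacobi-Trudi expansions of $\chi^\mu$ and $\chi^\nu$), pivoted on the first lexicographic cell of $\mathrm{supp}(A)$ lying outside $\mu \cap \nu$; a local toggle of entries along this pivot should pair terms of opposite sign. The subtle part is verifying that this involution is well-defined and fixed-point-free whenever $\mathrm{supp}(A) \not\subseteq \mu \cap \nu$, and that pairs of matched terms really do arise with opposite signs; I expect the careful bookkeeping here to mirror Dvir's original argument, which proceeds by induction and exploits the semigroup structure of intersections of Young diagrams. An alternative, and perhaps cleaner, route would be to upgrade the Regev bound $\ell(\lambda) \leq \mu_1 \nu_1$ (stated just above) by a row-removal / Pieri argument that strips from $\mu$ and $\nu$ the portions lying outside their common region.
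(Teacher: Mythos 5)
The paper itself offers no proof of this statement: it is quoted verbatim from Dvir's paper, so your attempt has to be measured against a complete argument from the literature, and it falls short of one. The part you actually carry out (the Mackey decomposition $M^\mu\otimes M^\nu\cong\bigoplus_A M^{\mathrm{sort}(A)}$ over contingency matrices, Young's rule, and conjugated dominance) only yields $\ell(\lambda)\le\#\mathrm{supp}(A)\le\ell(\mu)\,\ell(\nu)$, which after conjugating two arguments is exactly Regev's bound $\ell(\lambda)\le\mu_1\nu_1$ stated just above in the paper; it gets nowhere near $|\mu\cap\nu|$. Everything specific to Dvir's theorem is deferred to the proposed inclusion--exclusion via Jacobi--Trudi plus a sign-reversing involution, and that step is not carried out: the involution is never defined beyond ``a local toggle along a pivot cell,'' and no verification of well-definedness, sign reversal, or fixed-point-freeness is attempted --- indeed you concede that the bookkeeping should ``mirror Dvir's original argument.'' As it stands, this is a plan for a proof, with the entire content of the theorem located in the unproved step.

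There is also a concrete obstruction to the cancellation scheme as you describe it: the target inequality (supports confined to $\mu\cap\nu$, hence $\ell(\lambda)\le|\mu\cap\nu|$) is false as literally stated, because the printed statement is missing a conjugate. Take $\mu=(n)$ and $\nu=(1^n)$: then $\chi^\mu\chi^\nu=\chi^{(1^n)}$, so $g\bigl((1^n),\mu,\nu\bigr)=1$, while $\ell\bigl((1^n)\bigr)=n>1=|\mu\cap\nu|$. Dvir's actual bounds are $\lambda_1\le|\mu\cap\nu|$ and, by conjugating two of the three arguments, $\ell(\lambda)\le|\mu\cap\nu'|$; in the paper's application $\nu=(a^a)$ is self-conjugate, so the distinction is invisible there, but any correct proof must break the row/column symmetry --- precisely the sign twists your Jacobi--Trudi expansion would have to track, and which your involution, aimed at $\mu\cap\nu$ itself, cannot reproduce. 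For comparison, Dvir's proof works with the row statement directly: isolating the constituents of $\chi^\mu\chi^\nu$ with maximal first row, one shows their multiplicities are controlled by products of skew characters $\chi^{\mu/\tau}\chi^{\nu/\tau}$ for a common subdiagram $\tau\subseteq\mu\cap\nu$ with $|\tau|=\lambda_1$, which forces $\lambda_1\le|\mu\cap\nu|$; the length bound then follows by conjugation. If you want to salvage your approach, you should first restate the goal in this conjugate-correct form and expect the combinatorial core to be a restriction/induction argument on the first row rather than an involution on contingency matrices.
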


\begin{cor} There are three partition families \ts $\{\la^{(n)}\}$, \ts
$\{\mu^{(n)}\}$ \ts and \ts $\{\nu^{(n)}\}$, s.t.\ \ts
$$g\bigl(\la^{(n)},\mu^{(n)},\nu^{(n)}\bigr) \. = \. 0\., \quad  f^{\la^{(n)}} \.  = \. (n!)^{\Theta(1)},
$$
$$
f^{\mu^{(n)}} \. = \. \sqrt{n!} \. e^{-O(\sqrt{n})} \quad \text{and} \quad
f^{\nu^{(n)}} \.  =  \. \sqrt{n!} \. e^{-O(n)}\..
$$
\end{cor}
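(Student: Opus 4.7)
The plan is to apply Dvir's theorem via the conjugation symmetry $g(\la,\mu,\nu)=g(\la',\mu',\nu)$, which follows from $\chi^{\rho'}=\chi^\rho\otimes\chi^{(1^n)}$ applied twice and the fact that $\chi^{(1^n)}\otimes\chi^{(1^n)}$ is trivial. This reduces the vanishing to the condition $\ell(\la')=\la_1>|\mu'\cap\nu|$. The advantage is that $\la_1$, rather than $\ell(\la)$, now plays the role of the length condition: imposing $\ell(\la)>|\mu\cap\nu|$ directly would force $\la$ to be so tall-and-thin that $f^\la\ll(n!)^{\Theta(1)}$, whereas a large $\la_1$ is perfectly compatible with $f^\la=(n!)^{\Theta(1)}$.

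\smallskip

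For the construction, fix a constant $c\in(0,1)$ to be chosen below. I take $\mu^{(n)}$ to be any Plancherel sequence, $\nu^{(n)}=\bigl((2\lfloor\sqrt n\rfloor)^{\lfloor\sqrt n/2\rfloor}\bigr)$ a wide-short rectangle of size $\sim n$, and $\la^{(n)}=\bigl(\lfloor cn\rfloor,k^k\bigr)$ with $k=\lfloor\sqrt{(1-c)n}\rfloor$, i.e.\ a long first row followed by a $k\times k$ square (adjusted by $O(1)$ cells to have exactly $n$).

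\smallskip

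For the vanishing, since $\ell(\mu^{(n)})\sim 2\sqrt n$ by VKLS, one has $\mu'_i\leq 2\sqrt n=\nu_i$ for $i\leq\sqrt n/2$ and $\nu_i=0$ otherwise, so $|\mu'\cap\nu|=\sum_{i\leq\lfloor\sqrt n/2\rfloor}\mu'_i$. Because the VKLS curve in~\eqref{eq:vkls-shape} is self-conjugate (evenness of $\phi$ under $x\mapsto -x$), this sum is asymptotic to $\sum_{i\leq\sqrt n/2}\mu_i\sim n\,F(1/2)$, where $F(t)=\int_0^t\rho(s)\,ds$ with $\rho$ the normalized VKLS row profile ($\int_0^2\rho=1$). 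Since $F$ is strictly increasing and $F(2)=1$, one has $F(1/2)<1$. Picking any constant $c\in(F(1/2),1)$ then gives $\ell(\la')=\lfloor cn\rfloor>|\mu'\cap\nu|$ for all large $n$, and Dvir's theorem yields $g(\la',\mu',\nu)=0$, hence $g(\la,\mu,\nu)=0$.

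\smallskip

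For the dimensions, the Plancherel hypothesis gives $f^{\mu^{(n)}}=\sqrt{n!}\,e^{-O(\sqrt n)}$; the rectangle $\nu^{(n)}$ has balanced stable shape, so Theorem~\ref{t:stable} (or direct hook-length computation) yields $f^{\nu^{(n)}}=\sqrt{n!}\,e^{-O(n)}$. For $\la^{(n)}$ the hook-length formula factors according to the two pieces of the shape: the long first row contributes hooks evaluating to $\prod_{j=1}^{cn}h_{1,j}\sim (cn+k)!\,(cn-k)!/(cn)!$ and the $k\times k$ square contributes the standard square hook product $(k^2)!/f^{(k^k)}$; a direct Stirling calculation then yields $\log f^{\la^{(n)}}=\tfrac{1-c}{2}\,n\log n+O(n)$, so $f^{\la^{(n)}}=(n!)^{(1-c)/2+o(1)}=(n!)^{\Theta(1)}$. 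The main technical step is the asymptotic equivalence $\sum_{i\leq\sqrt n/2}\mu'_i\sim\sum_{i\leq\sqrt n/2}\mu_i$, which reduces to the self-conjugacy of the VKLS limit shape.
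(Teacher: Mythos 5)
Your construction is correct, but it takes a genuinely different route from the paper's. The paper applies Dvir's theorem directly: it takes $\nu=(a^a)$ with $n=a^2$, $\mu$ Plancherel, and $\la=(b^b1^{n-b^2})$ with $b=\lfloor \ve a\rfloor$, so that $\ell(\la)>n-b^2\ge(1-\ve^2)a^2>|\mu\cap\nu|$, and the hook-length formula gives $f^{\la}=(n!)^{\ve^2/2+o(1)}$. You instead use the symmetry $g(\la,\mu,\nu)=g(\la',\mu',\nu)$ and apply Dvir to the transposed triple, which obliges you to control $|\mu'\cap\nu|$ via self-conjugacy of the VKLS curve and a partial-sum estimate; this works, and your hook-length verification that $\la=(\lfloor cn\rfloor,k^k)$ has $f^{\la}=(n!)^{(1-c)/2+o(1)}$ is correct, so the argument goes through. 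Note, however, that your stated motivation for the conjugation detour is mistaken: imposing $\ell(\la)>|\mu\cap\nu|$ directly does \emph{not} force $f^{\la}\ll (n!)^{\Theta(1)}$, since $(n!)^{\Theta(1)}$ only asks for some constant power of $n!$; the paper's tall-thin shape $(b^b1^{n-b^2})$ has $\ell(\la)\ge(1-\ve^2)n$ and nevertheless $f^{\la}=(n!)^{\ve^2/2+o(1)}$. So the transposition buys nothing beyond a heavier verification, while the paper's intersection bound needs only that a constant fraction of the Plancherel area lies outside the $\sqrt{n}\times\sqrt{n}$ box, an estimate of the same nature as (but simpler than) your $F(1/2)$ computation. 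Two harmless bookkeeping points: adjusting your $\la^{(n)}$ and the rectangle $\nu^{(n)}$ to have size exactly $n$ requires moving $O(\sqrt n)$ cells, not $O(1)$ (all your estimates have at least $e^{O(n)}$ slack, and the paper itself only constructs its families along a subsequence of $n$); and in the vanishing step you only need the one-sided bound $|\mu'\cap\nu|\le\sum_{i\le \sqrt n/2}\mu'_i\le (F(1/2)+o(1))\ts n$, which the VKLS theorem indeed supplies, so the slight excess of some $\mu'_i$ over $2\lfloor\sqrt n\rfloor$ is irrelevant.
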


\begin{proof}
Let \ts $\mu\vdash n$ be a Plancherel shape, and \ts $\nu=(a^{a})$,
where \ts $n=a^2$.  Let \ts $\la = (b^b1^{n-b^2})$ \ts for \ts
$b=\lfloor\ve\ts a\rfloor$, and $\ve>0$ a fixed constant sufficiently small so
 that \ts $|\mu\cap\nu|<(1-\ve^2)a^2$. For example, $\ve=0.2$ works for~$n$
 large enough.  Since
 $$\ell(\la) \. > \. n-b^2 \. \ge \. (1-\ve^2)\ts a^2 \. > \. |\mu \cap \nu|\ts,
 $$
by Dvir's theorem, we have \ts $g(\la,\mu,\nu)=0$.  The bound on dimension
of $f^\la$ follows by the hook-length formula~\eqref{eq:HLF} and direct calculation.
\end{proof}

\bigskip

\section{Littlewood--Richardson coefficients of $S_n$} \label{s:LR}

\subsection{General inequalities} \label{ss:LR-gen}
The \ts \emph{Littlewood--Richardson
coefficients} \ts $c^\la_{\mu,\nu}$ \ts satisfy:
\begin{equation}\label{eq:LR-groups-def}
\sum_{\la \vdash n} \.  c^\la_{\mu,\ts \nu} \. f^\la \. = \. \binom{n}{k}\ts f^\mu \ts f^\nu\quad \ \ \text{and}  \ \
\quad \sum_{\mu \vdash k, \ts \nu \vdash n-k} \.  c^\la_{\mu,\ts \nu} \. f^\mu \ts f^\nu\. = \.f^\la\ts.
\end{equation}

\begin{lemma}  \label{l:LR-squared-groups}
For every \ts $0\le k \le n$, we have:
$$
\sum_{\la \vdash n} \. \sum_{\mu \vdash k, \ts \nu \vdash n-k} \. \bigl(c^\la_{\mu,\ts \nu}\bigr)^2 \, \ge \,\binom{n}{k} \ts.
$$
\end{lemma}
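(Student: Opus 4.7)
The plan is to mimic the Cauchy--Schwarz argument used in the proof of Proposition~\ref{p:Kron-refined-groups}, but now applied to the LR version of the identities in~\eqref{eq:LR-groups-def}. The role of Burnside's identity~\eqref{eq:Burnside} will be played twice, once for $S_k$ and once for $S_{n-k}$, to collapse the resulting double sum into the clean bound $\binom{n}{k}$.

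First I would fix $\mu\vdash k$ and $\nu\vdash n-k$, and regard the vectors $\bigl(f^\la\bigr)_{\la\vdash n}$ and $\bigl(c^\la_{\mu,\nu}\bigr)_{\la\vdash n}$ as elements of $\rr^{p(n)}$. The Cauchy--Schwarz inequality then gives
$$
\left(\sum_{\la\vdash n} c^\la_{\mu,\nu}\. f^\la\right)^{\!2}
\,\le\,
\left(\sum_{\la\vdash n} \bigl(c^\la_{\mu,\nu}\bigr)^{2}\right)
\left(\sum_{\la\vdash n} \bigl(f^\la\bigr)^{2}\right).
$$
Using the first identity in~\eqref{eq:LR-groups-def} on the left-hand side and Burnside's identity~\eqref{eq:Burnside} on the right-hand side, I obtain
$$
\sum_{\la\vdash n} \bigl(c^\la_{\mu,\nu}\bigr)^{2}
\,\ge\,
\frac{\binom{n}{k}^{2}\,\bigl(f^\mu\bigr)^{2}\bigl(f^\nu\bigr)^{2}}{n!}\..
$$

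Next I would sum this inequality over all $\mu\vdash k$ and $\nu\vdash n-k$, and apply Burnside's identity~\eqref{eq:Burnside} to the factorized sum, getting
$$
\sum_{\mu\vdash k}\bigl(f^\mu\bigr)^{2}\cdot\sum_{\nu\vdash n-k}\bigl(f^\nu\bigr)^{2}
\,=\,
k!\,(n-k)!
\,=\,
\frac{n!}{\binom{n}{k}}\ts.
$$
Substituting this in and cancelling one factor of $n!$ yields precisely $\binom{n}{k}$, proving the lemma.

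There is no real obstacle here: the argument is essentially one line of Cauchy--Schwarz followed by two factorized applications of Burnside's identity, and the only mild care needed is to keep track of the bookkeeping factor $\binom{n}{k}$ versus $k!(n-k)!/n!$ so that the cancellation works out to an equality $\binom{n}{k}$ rather than an off-by-one factor.
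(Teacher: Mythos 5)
Your proof is correct, but it is a genuinely different argument from the one in the paper. You fix $\mu,\nu$ and apply Cauchy--Schwarz to the vectors $\bigl(c^\la_{\mu,\nu}\bigr)_{\la\vdash n}$ and $\bigl(f^\la\bigr)_{\la\vdash n}$, then feed in the induction--dimension identity \eqref{eq:LR-groups-def} and three instances of Burnside's identity \eqref{eq:Burnside} (for $S_n$, $S_k$ and $S_{n-k}$); the bookkeeping $k!\,(n-k)!=n!/\binom{n}{k}$ indeed collapses everything to exactly $\binom{n}{k}$, and the argument also handles $k=0,n$ without issue. This is the same strategy the paper uses for the Kronecker analogue (Proposition~\ref{p:Kron-refined-groups}), transplanted to the LR setting, and it has the virtue of being completely elementary: no character values are needed, only dimensions. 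The paper instead proves the lemma by writing $c^\la_{\mu,\nu}$ as a character inner product over $H=S_k\times S_{n-k}$ and using orthogonality to evaluate the sum of squares \emph{exactly} as $\sum_{\al\in\Conj(H)} z_\al(G)/z_\al(H)$, then keeping only the identity class, whose contribution is $\binom{n}{k}$. What the paper's route buys is precisely that exact centralizer formula, which is reused verbatim later to derive the Harris--Willenbring identity (Lemma~\ref{t:HW}) and hence Stanley's asymptotics in Corollary~\ref{c:LR-stanley}; your Cauchy--Schwarz bound gives the inequality claimed in the lemma but not that closed form. Conversely, your argument is closer to a purely combinatorial double-counting statement (cf.\ the discussion in $\S$\ref{ss:finrem-LR}) and generalizes to any subgroup restriction where only dimension identities are available.
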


\begin{proof}  To simplify the notation, let $G=S_n$ and $H=S_k\times S_{n-k}$.  Denote by $\pi^{\mu\nu} = \chi^\mu\otimes\chi\nu$,
and let \ts $\xi^\la = \chi^\la|_H$ \ts be
the restriction of the character $\chi^\la$ to~$H$.
We have:
$$c^\la_{\mu,\ts \nu} \, = \, \sum_{\al \in \Conj(H)}\. z_\al^{-1} \ts \xi^\la(\al) \ts \pi^{\mu\nu}(\al)\ts,
$$
Then:
$$
\aligned
\sum_{\la \vdash n} \. \sum_{\mu \vdash k, \ts \nu \vdash n-k} \. \bigl(c^\la_{\mu,\ts \nu}\bigr)^2  \, & = \,
\sum_{\al,\gamma \in \Conj(H)} \. z_\al^{-1}\ts z_\gamma^{-1} \. \sum_{\la \vdash n} \. \sum_{\mu \vdash k, \ts \nu \vdash n-k} \.  \xi^\la(\al)\ts \xi^\la(\gamma) \.  \pi^{\mu\nu}(\al)\ts \pi^{\mu\nu}(\gamma)\\
& = \, \sum_{\al \in \Conj(H)} z_\al(H)^{-2} \ts \bigl(z_\al(H)\cdot z_\al(G)\bigr) \,  = \, \sum_{\al \in \Conj(H)} \. \frac{z_\al(G)}{z_{\al}(H)}\.,
\endaligned
$$
where \ts $z_\al(H)=|C_H(x)|$ denotes the size of the centralizer of $x \in \al$
within~$H$, and $z_\al(G)=|C_G(x)|$ is the size of the centralizer within~$G$.
Since the \ts RHS~$\ge z_1(G)/z_1(H) = \binom{n}{k}$, we obtain the inequality.
\end{proof}

\begin{lemma}  \label{l:LR-squared-groups-2}
For every \ts $0\le k \le n$, we have:
$$\sum_{\la \vdash n} \.  \bigl(c^\la_{\mu,\ts \nu}\bigr)^2 \, \le \,  \binom{n}{k} \quad \ \text{and} \ \quad
\sum_{\mu \vdash k} \. \sum_{\nu \vdash n-k} \. \bigl(c^\la_{\mu,\ts \nu}\bigr)^2 \, \le \,  \binom{n}{k}\ts.$$
\end{lemma}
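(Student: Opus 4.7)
The plan is to deduce both inequalities from a single principle: Frobenius reciprocity combined with the trivial observation that an irreducible representation of dimension $d$ occurs with multiplicity at most $D/d$ in any representation of dimension $D$. Concretely, for any subgroup $H\le G$ and any irreducible character $\pi$ of $H$, we have $\bigl\langle\pi,\,\mathrm{Res}_H\Ind_H^G\pi\bigr\rangle_H\le [G:H]$, since $\mathrm{Res}_H\Ind_H^G\pi$ has dimension $[G:H]\cdot\dim\pi$. Throughout, $G=S_n$, $H=S_k\times S_{n-k}$, and $[G:H]=\binom{n}{k}$.

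For the first inequality, I would fix $\mu\vdash k$ and $\nu\vdash n-k$, and set $\pi=\chi^\mu\otimes\chi^\nu$, which is irreducible in $H$ of dimension $f^\mu f^\nu$. By Frobenius reciprocity $c^\la_{\mu,\nu}=\bigl\langle\chi^\la,\Ind_H^G\pi\bigr\rangle_G$, so $\Ind_H^G\pi=\sum_{\la\vdash n} c^\la_{\mu,\nu}\ts\chi^\la$ and therefore
\[
\sum_{\la\vdash n}\bigl(c^\la_{\mu,\nu}\bigr)^2 \, = \, \bigl\|\Ind_H^G\pi\bigr\|_G^2 \, = \, \bigl\langle\pi,\,\mathrm{Res}_H\Ind_H^G\pi\bigr\rangle_H \, \le \, \binom{n}{k}\ts,
\]
the middle equality being Frobenius adjunction and the last step the multiplicity bound above.

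For the second inequality, I would fix $\la\vdash n$ and, starting from $c^\la_{\mu,\nu}=\bigl\langle\chi^\la|_H,\chi^\mu\otimes\chi^\nu\bigr\rangle_H$, expand $\chi^\la|_H=\sum_{\mu,\nu}c^\la_{\mu,\nu}\ts\chi^\mu\otimes\chi^\nu$ and compute the squared norm directly over $H$:
\[
\sum_{\mu\vdash k,\,\nu\vdash n-k}\bigl(c^\la_{\mu,\nu}\bigr)^2 \, = \, \bigl\|\chi^\la|_H\bigr\|_H^2 \, = \, \frac{1}{|H|}\sum_{h\in H}\bigl|\chi^\la(h)\bigr|^2 \, \le \, \frac{1}{|H|}\sum_{g\in G}\bigl|\chi^\la(g)\bigr|^2 \, = \, [G:H]\cdot\bigl\|\chi^\la\bigr\|_G^2 \, = \, \binom{n}{k}\ts.
\]
The inequality uses only nonnegativity of $|\chi^\la(g)|^2$, and the last equality uses irreducibility of $\chi^\la$ over $G$. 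There is no real obstacle here: the whole argument is a textbook application of Frobenius reciprocity dualized between induction and restriction, and the only point to watch is keeping the inner products $\langle\cdot,\cdot\rangle_G$ and $\langle\cdot,\cdot\rangle_H$ straight, in the same spirit as the proof of Lemma~\ref{l:LR-squared-groups}.
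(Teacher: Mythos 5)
Your argument is correct, and it reaches both inequalities by a route that is partly, but not entirely, the one the paper takes. The paper's proof is purely numerical: it derives from the two identities in~\eqref{eq:LR-groups-def} the termwise bounds \ts $c^\la_{\mu,\nu} \le f^\la/(f^\mu f^\nu)$ \ts and \ts $c^\la_{\mu,\nu} \le \binom{n}{k} f^\mu f^\nu / f^\la$, and then sums each against the complementary identity. Your first inequality is this same argument in representation-theoretic clothing: unwinding \ts $\langle \pi, \mathrm{Res}_H\ts\Ind_H^G\pi\rangle_H \le \dim(\Ind_H^G\pi)/\dim\pi$ \ts gives exactly \ts $\sum_\la (c^\la_{\mu,\nu})^2 \ts f^\mu f^\nu \le \sum_\la c^\la_{\mu,\nu} f^\la = \binom{n}{k} f^\mu f^\nu$, which is the paper's first display. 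Your second inequality, however, is genuinely different: instead of the termwise bound from the induced-dimension identity, you use the classical comparison
$$
\bigl\|\chi^\la|_H\bigr\|_H^2 \, = \, \frac{1}{|H|}\sum_{h\in H}\bigl|\chi^\la(h)\bigr|^2
\, \le \, [G:H]\cdot\bigl\|\chi^\la\bigr\|_G^2 \, = \, [G:H]\ts,
$$
which needs only positivity of $|\chi^\la|^2$ and irreducibility of $\chi^\la$, and never invokes the dimensions $f^\mu, f^\nu$ at all. This is arguably the cleanest proof of that half and makes transparent that the bound is really about the number of constituents of a restriction; the paper's version has the mild advantage of staying entirely within the elementary identities~\eqref{eq:LR-groups-def} already on the page, which is in keeping with its stated goal of arguments that transfer to general pairs $H\le G$ (yours transfers equally well). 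Both proofs are sound; no gaps.
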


\begin{proof}  We have:
$$\sum_{\la \vdash n} \.  \bigl(c^\la_{\mu,\ts \nu}\bigr)^2 \, \le \, \sum_{\la \vdash n} \.
c^\la_{\mu,\ts \nu} \. \frac{f^\la}{f^\mu \ts f^\nu} \, = \,  \frac{1}{f^\mu \ts f^\nu} \ts \cdot \ts f^\mu \ts f^\nu\ts \binom{n}{k}
\, = \,  \binom{n}{k}\ts,
$$
$$\sum_{\mu \vdash k, \ts \nu \vdash n-k} \.  \bigl(c^\la_{\mu,\ts \nu}\bigr)^2 \, \le \, \sum_{\mu \vdash k, \ts \nu \vdash n-k} \.
c^\la_{\mu,\ts \nu} \. \frac{f^\mu \ts f^\nu\ts \binom{n}{k}}{f^\la} \, = \,  \frac{1}{f^\la} \ts \cdot \ts f^\la\ts \binom{n}{k}
\, = \,  \binom{n}{k}\ts,
$$
where were repeatedly use both equations in~\eqref{eq:LR-groups-def}.
\end{proof}

\begin{cor}  \label{c:LR-squared-groups}
For every \ts $0\le k \le n$, we have:
$$
\binom{n}{k} \, \le \,
\sum_{\la \vdash n} \. \sum_{\mu \vdash k} \. \sum_{\nu \vdash n-k} \. \bigl(c^\la_{\mu,\ts \nu}\bigr)^2
\, \le \, \binom{n}{k} \. p(n)\ts.
$$
\end{cor}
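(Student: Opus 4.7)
The plan is to deduce both inequalities essentially for free from the two immediately preceding lemmas, which have already done the nontrivial character-theoretic and representation-theoretic work.

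For the lower bound, there is nothing new to prove: the inequality
$$\binom{n}{k} \,\le\, \sum_{\la\vdash n}\,\sum_{\mu\vdash k}\,\sum_{\nu\vdash n-k}\bigl(c^\la_{\mu,\nu}\bigr)^2$$
is exactly the content of Lemma~\ref{l:LR-squared-groups}, so I would just invoke it.

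For the upper bound, I would start from the second inequality of Lemma~\ref{l:LR-squared-groups-2}, which asserts that for each fixed $\la\vdash n$,
$$\sum_{\mu\vdash k}\,\sum_{\nu\vdash n-k}\bigl(c^\la_{\mu,\nu}\bigr)^2 \,\le\, \binom{n}{k}.$$
Summing this inequality over all $\la\vdash n$, and using that the number of such $\la$ equals $p(n)$, immediately yields $\sum_{\la,\mu,\nu}(c^\la_{\mu,\nu})^2 \le \binom{n}{k}\cdot p(n)$, as required.

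There is no genuine obstacle here: both bounds are short corollaries obtained by combining Lemma~\ref{l:LR-squared-groups} with the per-$\la$ bound from Lemma~\ref{l:LR-squared-groups-2}. One could alternatively derive a comparable upper bound by summing the first inequality of Lemma~\ref{l:LR-squared-groups-2} over $\mu,\nu$, giving $p(k)\,p(n-k)\binom{n}{k}$, but summing over $\la$ is cleaner and gives the stated factor $p(n)$.
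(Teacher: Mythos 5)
Your proof is correct and is exactly the argument the paper intends: the lower bound is Lemma~\ref{l:LR-squared-groups} verbatim, and the upper bound comes from summing the fixed-$\la$ inequality of Lemma~\ref{l:LR-squared-groups-2} over the $p(n)$ partitions $\la\vdash n$. Your closing observation about the alternative route giving $p(k)\,p(n-k)\binom{n}{k}$ matches the paper's own remark that $p(k)\,p(n-k)\ge p(n)$ explains the choice of which inequality to sum (note the paper's remark says ``first inequality'' where it plainly means the fixed-$\la$ one you used).
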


Note that \ts $p(k)\ts p(n-k) \ge p(n)$ \ts (cf.~\cite{DP}),
which is why we used here only the first inequality from
Lemma~\ref{l:LR-squared-groups-2}.

\medskip

\subsection{Largest Littlewood--Richardson coefficients}\label{ss:LR-max}
Here we prove theorems~\ref{t:LR-intro-asy} and~\ref{t:LR-intro1}

\begin{prop} \label{p:LR-groups-bounds}
We have:
$$
\frac{1}{\sqrt{p(k) \. p(n-k) \. p(n)}} \. \ts \binom{n}{k}^{1/2} \, \le \,
\rLR(n,k) \, \le \, \binom{n}{k}^{1/2} \ts.
$$
\end{prop}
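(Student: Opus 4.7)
The proof is a direct two-sided pigeonhole/averaging argument using the bounds on $\sum (c^\la_{\mu,\nu})^2$ already established in Lemma~\ref{l:LR-squared-groups} and Lemma~\ref{l:LR-squared-groups-2} (and combined in Corollary~\ref{c:LR-squared-groups}). Since both the upper and lower bounds are formally $L^2$-type comparisons, essentially no further ingredients are needed.

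For the upper bound, fix any $\mu\vdash k$ and $\nu\vdash n-k$. Lemma~\ref{l:LR-squared-groups-2} gives
$$
\bigl(c^\la_{\mu,\nu}\bigr)^2 \, \le \, \sum_{\la\vdash n} \bigl(c^\la_{\mu,\nu}\bigr)^2 \, \le \, \binom{n}{k}
$$
for every $\la\vdash n$. Taking the maximum over all triples $(\la,\mu,\nu)$ yields $\rLR(n,k) \le \binom{n}{k}^{1/2}$.

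For the lower bound, apply Corollary~\ref{c:LR-squared-groups}: the sum of $\bigl(c^\la_{\mu,\nu}\bigr)^2$ over all triples with $\la\vdash n$, $\mu\vdash k$, $\nu\vdash n-k$ is at least $\binom{n}{k}$. The number of such triples is $p(n)\ts p(k)\ts p(n-k)$, so by averaging there exists a triple with
$$
\bigl(c^\la_{\mu,\nu}\bigr)^2 \, \ge \, \frac{1}{p(n)\ts p(k)\ts p(n-k)}\ts\binom{n}{k}\ts.
$$
Taking the square root and the maximum over triples gives the claimed lower bound on $\rLR(n,k)$.

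There is no real obstacle: the whole proposition is a formal consequence of the inequalities in Section~\ref{ss:LR-gen}. The only thing to double-check is that Corollary~\ref{c:LR-squared-groups} is applied correctly (it combines both Lemma~\ref{l:LR-squared-groups} and Lemma~\ref{l:LR-squared-groups-2}, and for the lower bound here we only use the first, Burnside-type, inequality). This proposition will in turn feed into Theorem~\ref{t:LR-intro-asy} once one absorbs the $\sqrt{p(k)\ts p(n-k)\ts p(n)}$ factor into the $e^{-d\sqrt{n}}$ error via the Hardy--Ramanujan asymptotics for $p(n)$.
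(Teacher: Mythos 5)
Your proof is correct and follows exactly the route the paper intends: the upper bound is the term-by-term consequence of Lemma~\ref{l:LR-squared-groups-2}, and the lower bound is the averaging consequence of Lemma~\ref{l:LR-squared-groups} over the $p(n)\ts p(k)\ts p(n-k)$ triples. The paper's own proof is just a one-line citation of these two lemmas, so your write-up simply makes the same argument explicit.
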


\begin{proof}
The lower bound follows immediately from Lemma~\ref{l:LR-squared-groups},
while the upper bound follows from Lemma~\ref{l:LR-squared-groups-2}.
\end{proof}

Theorem~\ref{t:LR-intro-asy} follows immediately from here:

\begin{cor}\label{c:LR-asy}
We have:
$$
\binom{n}{k}^{1/2} \ts e^{-d \ts \sqrt{n} \ts (1+o(1))} \. \le  \. \rC(n,k) \. \le  \. \binom{n}{k}^{1/2} \ts,
$$
where \ts $d = \pi (1+\sqrt{2})/\sqrt{6}\approx 3.0963$
\end{cor}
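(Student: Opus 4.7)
The plan is to derive Corollary~\ref{c:LR-asy} as a direct consequence of Proposition~\ref{p:LR-groups-bounds}, with the asymptotic constant $d$ obtained by applying the Hardy--Ramanujan estimate uniformly to the three partition-function factors in the lower bound.

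First, I would observe that the upper bound $\rC(n,k) \le \binom{n}{k}^{1/2}$ is already the right-hand inequality of Proposition~\ref{p:LR-groups-bounds}, so there is nothing to do there. For the lower bound, I would start from
$$
\rC(n,k) \, \ge \, \binom{n}{k}^{1/2} \. \bigl(p(k)\.p(n-k)\.p(n)\bigr)^{-1/2}
$$
and take logarithms. By the Hardy--Ramanujan asymptotic recalled in $\S$\ref{ss:def-part}, we have $\log p(m) = \pi\sqrt{2m/3}\.(1+o(1))$ as $m\to \infty$, so
$$
\tfrac12 \. \log\bigl(p(k)\.p(n-k)\.p(n)\bigr) \, = \, \tfrac{\pi}{2}\. \sqrt{\tfrac{2}{3}} \. \bigl(\sqrt{k} \ts + \ts \sqrt{n-k} \ts + \ts \sqrt{n}\ts\bigr) \ts (1+o(1))\ts.
$$

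The key elementary estimate is that $\sqrt{k}+\sqrt{n-k}$ is maximized over $0 \le k \le n$ at $k=n/2$, giving the bound $\sqrt{k}+\sqrt{n-k} \le \sqrt{2n}$ by concavity. Substituting, the sum of the three square roots is at most $(1+\sqrt{2})\.\sqrt{n}$, uniformly in~$k$. Therefore
$$
\sqrt{p(k)\.p(n-k)\.p(n)} \, \le \, \exp\!\Bigl(\tfrac{\pi(1+\sqrt{2})}{\sqrt{6}}\. \sqrt{n}\ts (1+o(1))\Bigr),
$$
and plugging back in yields exactly the claimed lower bound with $d = \pi(1+\sqrt{2})/\sqrt{6}$. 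The numerical value $d \approx 3.0963$ can then be checked directly.

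There is essentially no obstacle here: the whole argument is a bookkeeping calculation once Proposition~\ref{p:LR-groups-bounds} is in hand. The only mildly subtle point is ensuring the $o(1)$ term is uniform in~$k$; this is harmless because the Hardy--Ramanujan formula is applied to $p(n)$ (large) and to $p(k), p(n-k)$, and in the regime where $k$ (or $n-k$) is bounded the factor $p(k)$ is $O(1)$ and contributes nothing to the exponential, so only the single factor $\sqrt{p(n)}$ matters and the bound $\sqrt{k}+\sqrt{n-k} \le \sqrt{2n}$ still holds trivially.
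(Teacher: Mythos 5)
Your proof is correct and follows essentially the same route as the paper: both inequalities are read off from Proposition~\ref{p:LR-groups-bounds}, and the constant $d=\pi(1+\sqrt{2})/\sqrt{6}$ comes from estimating $\sqrt{p(k)\ts p(n-k)\ts p(n)}$. The only minor difference is that the paper bounds $p(k)\ts p(n-k)\le p(n/2)^2$ via log-concavity of the partition function (an exact inequality, uniform in $k$), whereas you apply Hardy--Ramanujan to each factor together with $\sqrt{k}+\sqrt{n-k}\le\sqrt{2n}$, which is equally valid given the uniformity caveat you already address.
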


\begin{proof}  Recall that \ts $\log p(n) \sim 2\co\sqrt{n}$, where \ts
$\co=\pi/\sqrt{6}$ defined in~\eqref{eq:VK-const}. By
log-concavity of the partition function~\cite{DP}, we also have:
$$
\log \ts \bigl[p(k) \. p(n-k)\bigr] \, \le \, 2\ts \log p(n/2) \, \sim  \, 2 \. \sqrt{2} \. \co \ts \sqrt{n}\ts.
$$
Now Proposition~\ref{p:LR-groups-bounds} implies the result. \end{proof}

\smallskip

\begin{thm} \label{t:LR-groups-exist1}
Let \ts $0\le k \le n$  and \ts $\la\vdash n$.  Suppose \ts
$f^\mu\ts \ge \ts \sqrt{k!}/a$ \ts and \ts $f^\nu\ts \ge \ts \sqrt{(n-k)!}/a$,
for some \ts $a\ge 1$.  Then there exists
\ts $\la \vdash n$, such that:
$$
f^\la \, \ge \, \frac{\sqrt{n!}}{a^2 \ts p(n)}\., \quad \text{and} \quad
c^\la_{\mu,\ts \nu} \, \ge \, \frac{\sqrt{\binom{n}{k}}}{a^2 \ts p(n)}\,.
$$
\end{thm}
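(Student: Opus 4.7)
The plan is to replicate the argument of Theorem~\ref{t:Kron-main-groups}, with the branching identity~\eqref{eq:LR-groups-def} playing the role of the Kronecker identity. First I would start from
$$\sum_{\la\vdash n} \. c^\la_{\mu,\nu} \. f^\la \, = \, \binom{n}{k} \ts f^\mu \ts f^\nu$$
and lower-bound the right-hand side using the hypothesis $f^\mu\ge \sqrt{k!}/a$, $f^\nu\ge\sqrt{(n-k)!}/a$:
$$\sum_{\la\vdash n} \. c^\la_{\mu,\nu} \. f^\la \, \ge \, \frac{\binom{n}{k}\sqrt{k!\ts(n-k)!}}{a^2} \, = \, \frac{\sqrt{n!\ts\binom{n}{k}}}{a^2}\ts.$$
Since the sum on the left has at most $p(n)$ terms, by pigeonhole there exists $\la\vdash n$ with
$$c^\la_{\mu,\nu} \. f^\la \, \ge \, \frac{\sqrt{n!\ts\binom{n}{k}}}{a^2\ts p(n)}\ts.$$
I would take this $\la$ as the claimed witness.

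To extract the two separate lower bounds, I would combine this product estimate with the two natural upper bounds on the individual factors. For the first target, Lemma~\ref{l:LR-squared-groups-2} gives $c^\la_{\mu,\nu}\le\sqrt{\binom{n}{k}}$, whence
$$f^\la \, \ge \, \frac{\sqrt{n!\ts\binom{n}{k}}}{a^2\ts p(n)\ts\sqrt{\binom{n}{k}}} \, = \, \frac{\sqrt{n!}}{a^2\ts p(n)}\ts.$$
For the second target, the Burnside identity~\eqref{eq:Burnside} immediately gives $f^\la\le\sqrt{n!}$, whence
$$c^\la_{\mu,\nu} \, \ge \, \frac{\sqrt{n!\ts\binom{n}{k}}}{a^2\ts p(n)\ts\sqrt{n!}} \, = \, \frac{\sqrt{\binom{n}{k}}}{a^2\ts p(n)}\ts.$$

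I do not expect any real obstacle; the proof is a direct LR analogue of the Kronecker calculation. The one conceptual point worth emphasizing is that the pigeonhole produces the factor $\sqrt{n!\ts\binom{n}{k}}$, which is exactly the geometric mean of the two competing upper bounds $\sqrt{n!}$ (on $f^\la$) and $\sqrt{\binom{n}{k}}$ (on $c^\la_{\mu,\nu}$). This matching is precisely what makes both of the desired inequalities come out cleanly at the \emph{same} witness $\la$, and it explains why the dimension bound $\sqrt{n!}$ has to be used rather than the sharper $\rD(n)$, since otherwise the second inequality would over-shoot the upper bound from Lemma~\ref{l:LR-squared-groups-2}.
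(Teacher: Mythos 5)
Your proof is correct and is essentially the paper's own argument: both extract the maximal term of $\sum_{\la\vdash n} c^\la_{\mu,\nu}\ts f^\la=\binom{n}{k}f^\mu f^\nu$ via pigeonhole over $p(n)$ terms, then divide by the upper bounds $c^\la_{\mu,\nu}\le\sqrt{\binom{n}{k}}$ and $f^\la\le\sqrt{n!}$ respectively. Only your closing aside is off: nothing forces using $\sqrt{n!}$ instead of the sharper $\rD(n)$ (the paper in fact divides by $\rD(n)$ and then bounds it by $\sqrt{n!}$), and a sharper upper bound on $f^\la$ would only strengthen, not break, the bound on $c^\la_{\mu,\nu}$ -- but this does not affect the validity of your proof.
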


\begin{proof}
Let $\la$ be the index of the largest term in the RHS of
$$
\frac{n!}{a^2 \ts \sqrt{k!\. (n-k)!}} \, \le \,
f^\mu \ts f^\nu \ts \binom{n}{k} \, = \,
\sum_{\la\vdash k} \. c^\la_{\mu,\ts \nu}\. f^\la\ts.
$$
On the one hand, by the upper bound in Proposition~\ref{p:LR-groups-bounds} we have:
$$
f^\la\, \ge \, \frac{f^\mu \ts f^\nu \ts \binom{n}{k}}{p(n)\cdot \rLR(n,k)}  \, \ge \,
\frac{n!}{a^2 \ts \sqrt{k!\. (n-k)!} \. \cdot \.  p(n) \ts \sqrt{\binom{n}{k}}} \, = \,
\frac{\sqrt{n!}}{a^2 \ts p(n)}\..
$$
Similarly,
$$
c^\la_{\mu,\ts \nu} \, \ge \,
\frac{f^\mu \ts f^\nu \ts \binom{n}{k}}{p(n)\cdot \rD(n)}  \, \ge \,
\frac{n!}{a^2 \ts \sqrt{k!\. (n-k)!} \. \cdot \.  p(n) \ts \sqrt{n!}} \, = \,
\frac{\sqrt{\binom{n}{k}}}{a^2 \ts p(n)}\.,
$$
as desired.
\end{proof}

\smallskip

\begin{thm} \label{t:LR-groups-exist}
Let \ts $0\le k \le n$  and \ts $\la\vdash n$.  Suppose \ts
$f^\la\ts \ge \ts \sqrt{n!}/a$, for some \ts $a\ge 1$.  Then there exists
\ts $\mu \vdash k$ and $\nu\vdash (n-k)$, such that:
$$
f^\mu \, \ge \, \frac{\sqrt{k!}}{a \. p(k)\ts p(n-k)}\., \quad
f^\nu\, \ge \, \frac{\sqrt{(n-k)!}}{a \. p(k)\ts p(n-k)}
\quad \text{and} \quad
c^\la_{\mu,\ts \nu} \, \ge \, \frac{\sqrt{\binom{n}{k}}}{a \. p(k)\ts p(n-k)}\,.
$$
\end{thm}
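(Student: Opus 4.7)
The plan is to use the second identity in \eqref{eq:LR-groups-def}, namely
$$
\sum_{\mu \vdash k, \, \nu \vdash n-k} \. c^\la_{\mu,\nu} \ts f^\mu \ts f^\nu \, = \, f^\la,
$$
as a dual counterpart to the approach taken for Theorem~\ref{t:LR-groups-exist1}. Since the sum has $p(k)\ts p(n-k)$ terms and its total is at least $\sqrt{n!}/a$ by hypothesis, the pigeonhole principle immediately produces a single pair $\mu \vdash k$, $\nu \vdash n-k$ such that
$$
c^\la_{\mu,\nu} \ts f^\mu \ts f^\nu \, \ge \, \frac{\sqrt{n!}}{a \ts p(k) \ts p(n-k)}\ts.
$$

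The key point is then to decouple this product into the three desired individual lower bounds. For this I would invoke the trio of universal upper bounds already established: $c^\la_{\mu,\nu} \le \rC(n,k) \le \sqrt{\binom{n}{k}}$ from Proposition~\ref{p:LR-groups-bounds}, and $f^\mu \le \sqrt{k!}$, $f^\nu \le \sqrt{(n-k)!}$ from Burnside's identity~\eqref{eq:Burnside} applied to $S_k$ and $S_{n-k}$. These three maxima satisfy the convenient identity
$$
\sqrt{\binom{n}{k}} \cdot \sqrt{k!} \cdot \sqrt{(n-k)!} \, = \, \sqrt{n!}\ts,
$$
which is precisely the numerator of the pigeonhole bound above. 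Dividing the displayed inequality by the upper bounds on any two of the three factors therefore yields the claimed lower bound on the third, and each resulting denominator is exactly $a \ts p(k) \ts p(n-k)$, matching the statement.

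The argument is largely mechanical given the tools of $\S$\ref{ss:LR-gen}--\ref{ss:LR-max}, so there is no substantial obstacle. The only care required is to emphasize that the three conclusions refer to the same pair $(\mu,\nu)$ extracted by a single pigeonhole application — attempting to choose $(\mu,\nu)$ separately for the three bounds would weaken the theorem and miss the point. It is also worth noting in passing that the structural reason the argument closes so cleanly is the arithmetic identity $\binom{n}{k}\cdot k! \cdot (n-k)! = n!$, which makes the bound $\rC(n,k)\le \sqrt{\binom{n}{k}}$ "compatible" with the dimension bounds for $S_k$ and $S_{n-k}$; this is the same phenomenon underlying Theorem~\ref{t:LR-intro-asy}.
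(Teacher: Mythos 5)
Your proposal is correct and follows essentially the same route as the paper: a pigeonhole over the $p(k)\ts p(n-k)$ terms of the branching identity $\sum_{\mu,\nu} c^\la_{\mu,\nu}\ts f^\mu f^\nu = f^\la$, followed by decoupling the product via the upper bounds $c^\la_{\mu,\nu}\le\sqrt{\binom{n}{k}}$, $f^\mu\le\sqrt{k!}$, $f^\nu\le\sqrt{(n-k)!}$ and the identity $\binom{n}{k}\ts k!\ts(n-k)!=n!$. The paper phrases the decoupling in two stages (first bounding $f^\mu f^\nu$, then dividing through), but the content is identical.
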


\begin{proof}
Let $(\mu,\nu)$ be the index of the largest term in the RHS of
$$
\sqrt{n!}/a \, \le \, f^\la \, = \, \sum_{\mu\vdash k} \.\sum_{\nu\vdash n-k} \. c^\la_{\mu,\ts \nu}\. f^\mu \ts f^\nu\ts.
$$
On the one hand, by the upper bound in Proposition~\ref{p:LR-groups-bounds} we have:
$$
f^\mu \ts f^\nu\, \ge \, \frac{f^\la}{p(k)\ts p(n-k)\cdot \rLR(n,k)}  \, \ge \,
\frac{\sqrt{n!}/a}{p(k)\ts p(n-k)\cdot \sqrt{\binom{n}{k}}} \, = \,
\frac{\sqrt{k!\ts (n-k)!}}{a \. p(k)\ts p(n-k)}\..
$$
Dividing both sides by $f^\mu$ and $f^\nu$ gives the first two bounds.
Similarly,
$$
c^\la_{\mu,\ts \nu} \, \ge \, \frac{f^\la}{p(k)\ts p(n-k)\cdot f^\mu \ts f^\nu}
\, \ge \,
\frac{\sqrt{n!}/a}{p(k)\ts p(n-k)\cdot \sqrt{k!\ts(n-k)!}}
\, = \,
\frac{\sqrt{\binom{n}{k}}}{a \. p(k)\ts p(n-k)}\,,
$$
as desired.
\end{proof}

\smallskip

Theorems~\ref{t:LR-groups-exist1} and~\ref{t:LR-groups-exist}
immediately imply parts~$\WI$ \ts and~$\WII$ \ts of Theorem~\ref{t:LR-intro1}.
They are patterned after Theorem~\ref{t:Kron-main-groups}. Proof of part~$\WIII$
uses a combination of different tools and is postponed until~$\S$\ref{ss:skew-third-case}.

\medskip

\subsection{Stanley's formula}
Here we use Lemma~\ref{l:LR-squared-groups}
to obtain the LR--analogue of~\eqref{eq:Kron-squares} due to Stanley,
as his proof is unpublished.  First, we need the following recent
identity of Harris and Willenbring:

\begin{lemma}[\cite{HW}] \label{t:HW}
We have:
\begin{equation}
\label{eq:LR_squares}
\sum_{\la,\ts\mu,\ts\nu \ts \vdash n} \, \bigr(c^\la_{\mu,\nu}\bigr)^2 \. q^{|\mu|} \. t^{|\nu|}
\,= \, \prod_{i=1}^\infty \frac{1}{1-q^i -t^i}\..
\end{equation}
\end{lemma}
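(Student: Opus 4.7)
The plan is to refine the computation already performed inside the proof of Lemma~\ref{l:LR-squared-groups}, which in fact produces an exact formula rather than merely the inequality recorded there. Taking $G=S_n$ and $H = S_k \times S_{n-k}$, that proof establishes
$$\sum_{\la \vdash n}\,\sum_{\mu \vdash k,\,\nu \vdash n-k} \bigl(c^\la_{\mu,\nu}\bigr)^2 \,=\, \sum_{\alpha \vdash k,\,\beta \vdash n-k} \frac{z_{\alpha\cup\beta}}{z_\alpha\, z_\beta}\ts,$$
where $\alpha\cup\beta$ denotes the merger of $\alpha$ and $\beta$ as multisets of parts. Multiplying by $q^k t^{n-k}$ and summing over all $n\ge 0$ and $0\le k\le n$, the target identity~\eqref{eq:LR_squares} reduces to
$$\sum_{\alpha,\,\beta} \frac{z_{\alpha\cup\beta}}{z_\alpha\, z_\beta}\, q^{|\alpha|} t^{|\beta|} \,=\, \prod_{i=1}^\infty \frac{1}{1-q^i-t^i}\ts,$$
with $\alpha,\beta$ now ranging over all partitions (and the outer sum of~\eqref{eq:LR_squares} read over triples with $|\la|=|\mu|+|\nu|$).

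Next I would use the explicit formula $z_\gamma = \prod_{i \ge 1} i^{m_i(\gamma)}\, m_i(\gamma)!$, where $m_i(\gamma)$ is the multiplicity of $i$ in $\gamma$. Since $m_i(\alpha\cup\beta) = m_i(\alpha)+m_i(\beta)$, the factors $i^{\bullet}$ cancel between numerator and denominator, leaving the clean factorization
$$\frac{z_{\alpha\cup\beta}}{z_\alpha\, z_\beta} \,=\, \prod_{i\ge 1} \binom{m_i(\alpha)+m_i(\beta)}{m_i(\alpha)}\ts.$$
Because a partition is uniquely determined by its multiplicity sequence $(m_i)_{i\ge 1}$, the sum over pairs $(\alpha,\beta)$ decouples into an independent sum over each part size $i\ge 1$, with parts of size $i$ contributing $q^{ia}t^{ib}$ for $a=m_i(\alpha)$, $b=m_i(\beta)$. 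Thus the double sum factors as
$$\prod_{i\ge 1}\ts \sum_{a,b\ge 0} \binom{a+b}{a}\,(q^i)^a(t^i)^b\ts,$$
and the elementary identity $\sum_{a,b\ge 0}\binom{a+b}{a}x^a y^b = \sum_{n\ge 0}(x+y)^n = (1-x-y)^{-1}$, applied with $x=q^i$, $y=t^i$, finishes the proof.

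The argument is essentially a bookkeeping exercise and I do not foresee any genuine obstacle. The two conceptual ingredients are the observation that the proof of Lemma~\ref{l:LR-squared-groups} already yields the exact sum $\sum z_{\alpha\cup\beta}/(z_\alpha z_\beta)$, and the multiplicative decomposition of this ratio over part sizes, which together cause the infinite product on the right-hand side of~\eqref{eq:LR_squares} to emerge automatically.
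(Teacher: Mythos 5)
Your proof is correct and follows essentially the same route as the paper: the paper likewise extracts the exact identity \ $\sum_{\la,\mu,\nu}(c^\la_{\mu,\nu})^2=\sum_{\al\vdash k,\,\be\vdash n-k} z_{\al\cup\be}/(z_\al\ts z_\be)$ \ from the proof of Lemma~\ref{l:LR-squared-groups}, factors the ratio as $\prod_{i\ge1}\binom{m_i(\al)+m_i(\be)}{m_i(\al)}$ using $m_i(\al\cup\be)=m_i(\al)+m_i(\be)$, and reads this off as the coefficient of $q^k t^{n-k}$ in $\prod_{i\ge1}(1-q^i-t^i)^{-1}$. The only difference is that you make the final generating-function step (the identity $\sum_{a,b}\binom{a+b}{a}x^ay^b=(1-x-y)^{-1}$ and the decoupling over part sizes) explicit, which the paper leaves implicit.
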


\begin{proof} Our proof is built on the proof of Lemma~\ref{l:LR-squared-groups}.
Recall that
$$
z_\al \. := \, \prod_i \. i^{m_i(\al)}\. m_i(\al)!
$$
for all $\al \vdash n$, where $m_i(\al)$ denote the number of parts of size $i$ in~$\al$.
We then have:
$$
\aligned
& \sum_{\la\vdash n, \ts \mu\vdash k, \ts \nu\vdash n-k} (c^\la_{\mu,\nu})^2 \, = \,
\sum_{\al \vdash k, \be \vdash n-k} \. \frac{z_{\al\cup\be}}{z_\al \cdot z_\be} \, = \,
\sum_{\al \vdash k, \ts \be \vdash n-k} \, \prod_{i\ge 1} \. \frac{i^{m_i(\al\cup\be)}\ts m_i(\al\cup\be)!}{i^{m_i(\al)}\ts m_i(\al)! \cdot i^{m_i(\be)}\ts m_i(\be)!}
\\
& \hskip1.0cm = \, \sum_{\al \vdash k, \ts \be \vdash n-k} \, \prod_{i\ge 1} \. \binom{ m_i(\al)+m_i(\be)}{m_i(\al)} \, = \, [q^k t^{n-k}] \prod_{i\ge 1} \frac{1}{1-q^i - t^i}\..
\endaligned
$$
Note that the last line follows from \ts $m_i(\al\cup\be) = m_i(\al)+m_i(\be)$.
\end{proof}

\begin{cor}[{Stanley~\cite[Exc.~$7.79$]{EC2-supp}}]\label{c:LR-stanley}
$$
\sum_{\la,\ts\mu,\ts\nu \ts \vdash n} \, \bigr(c^\la_{\mu,\nu}\bigr)^2 \, \sim  \, K\ts 2^{n} \quad {as} \ \ n\to \infty,
$$
where
$$
K \, = \, \prod_{i=1}^\infty \. \left(1-\frac{1}{2^i}\right)^{-1} \approx \. 3.4627466195
$$
\end{cor}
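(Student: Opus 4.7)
The plan is to derive the asymptotic from the Harris--Willenbring identity in Lemma~\ref{t:HW} by a one-variable specialization followed by a standard singularity analysis. Since $c^\la_{\mu,\nu}=0$ unless $|\la|=|\mu|+|\nu|$, setting $q=t=x$ in \eqref{eq:LR_squares} collapses the weight $q^{|\mu|}t^{|\nu|}$ to $x^{|\la|}$, and the left-hand side becomes
$$
\sum_{n \ge 0} \, x^n \, \sum_{\la,\ts\mu,\ts\nu\ts \vdash n} \bigl(c^\la_{\mu,\nu}\bigr)^2,
$$
while the right-hand side becomes \ts $F(x) := \prod_{i\ge 1}(1-2x^i)^{-1}$. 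Thus the corollary reduces to showing $[x^n]F(x) \sim K\ts 2^n$.

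Next I would isolate the dominant singularity of $F$. Write
$$
F(x) \, = \, \frac{1}{1-2x} \cdot G(x), \quad \text{where} \quad G(x) \, := \, \prod_{i\ge 2}\frac{1}{1-2x^i}\ts.
$$
The product defining $G(x)$ converges absolutely on $|x|<1/\sqrt{2}$, since $\sum_{i\ge 2} 2|x|^i<\infty$ there; in particular $G$ is analytic in a disk strictly larger than $|x|<1/2$. Hence $F$ is meromorphic in $|x|<1/\sqrt{2}$ with a unique simple pole at $x=1/2$, and a straightforward transfer argument (either expand $G(x) = \sum_k g_k x^k$ and convolve with the geometric series $(1-2x)^{-1}=\sum 2^n x^n$, or apply the standard meromorphic extraction lemma) yields
$$
[x^n]\ts F(x) \, = \, G(1/2)\cdot 2^n \. + \. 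O\bigl((\sqrt{2}-\ve)^n\bigr)
$$
for any small $\ve>0$. This gives the sharp asymptotic with leading constant $G(1/2)$.

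Finally, evaluate the constant: reindexing $j=i-1$,
$$
G(1/2) \, = \, \prod_{i\ge 2}\frac{1}{1-2\cdot 2^{-i}} \, = \, \prod_{j\ge 1}\frac{1}{1-2^{-j}} \, = \, K\ts,
$$
which matches the value stated in the corollary. I do not expect any real obstacle here: the Harris--Willenbring identity does all the combinatorial work, and the asymptotic extraction is routine because the dominant pole is simple and well separated from the remaining singularities on $|x|=1/\sqrt{2}$. The only point requiring a little care is justifying that tail contributions from the other factors of $F$ are genuinely of lower exponential order, which is immediate from the radius of convergence of $G$.
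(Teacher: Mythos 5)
Your proposal is correct and takes essentially the same route as the paper: the key step in both is setting $q=t$ in the Harris--Willenbring identity of Lemma~\ref{t:HW} to reduce the sum to the coefficients of $\prod_{i\ge1}(1-2t^i)^{-1}$; the paper then simply cites the known asymptotic $p_2(n)\sim K\,2^n$ for bicolored partitions (OEIS A070933), whereas you supply that step yourself by a routine simple-pole extraction at $x=1/2$, with the correct constant $G(1/2)=K$. One small slip, immaterial to the conclusion: since $G(x)=\prod_{i\ge2}(1-2x^i)^{-1}$ has simple poles at $x=\pm 1/\sqrt{2}$, the error term you can claim from this argument is $O\bigl((\sqrt{2}+\ve)^n\bigr)$ (or $O(2^{n/2})$ with more care), not $O\bigl((\sqrt{2}-\ve)^n\bigr)$.
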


Note that this asymptotic bound implies part~$(\ast\ast)$ of
Stanley's theorem~\ref{t:stanley-asy}.  On the other hand,
neither Theorem~\ref{t:LR-groups-exist} implies the corollary,
nor vice versa.

\begin{proof}[Proof of Corollary~\ref{c:LR-stanley}]
Let $p_2(n)$ is the number of \emph{bicolored partitions} of~$n$ \ts defined as follows
$($see \ts \cite[\href{https://oeis.org/A070933}{A070933}]{OEIS}$):$
$$\sum_{n=0}^\infty \. p_2(n) \ts t^n  \, = \, \prod_{i=1}^\infty \frac{1}{1-2t^i}\,,
$$
and recall that \ts $p_2(n) \sim K \ts 2^n$.
Taking $q=t$ in Lemma~\ref{t:HW}, we obtain the result.
\end{proof}

\medskip

\subsection{Largest refined LR--coefficients}
By analogy with the Kronecker coefficients, define

\begin{equation}\label{eq:def-LR-ref}
\rC(\la) \, := \, \max_{0 \le k \le n} \. \max_{\mu \vdash k}\. \max_{\nu \vdash n-k}
 \. c^\la_{\mu,\nu}\..
\end{equation}

\begin{thm} \label{t:LR-refined-upper}
Fix $\la\vdash n$ and let $\ell\ge \ell(\la)$.  Then:
\begin{equation}\label{eq:thm-LR-ref}
\rC(\la)^2  \, \le \, \prod_{(i,j)\in \la} \. \frac{2\ts \ell +j-i}{\ell +j-i}\..
\end{equation}
\end{thm}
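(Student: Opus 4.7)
The plan is to bound $\rC(\la)^2$ by combining two complementary specializations of the Littlewood--Richardson expansion, and then identify the resulting ratio of Schur evaluations via the hook-content formula~\eqref{eq:HCF}.

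First, I would write down the two dual identities coming from the LR coefficients. On the one hand, the coproduct formula for Schur functions gives
$$
s_\la(x_1,\ldots,x_\ell,\ts y_1,\ldots,y_\ell) \, = \, \sum_{\mu,\nu} \. c^\la_{\mu,\nu} \ts s_\mu(x_1,\ldots,x_\ell) \ts s_\nu(y_1,\ldots,y_\ell)\ts,
$$
which after setting all variables equal to $1$ becomes
$$
s_\la(1^{2\ell}) \, = \, \sum_{\mu,\nu} \. c^\la_{\mu,\nu} \ts s_\mu(1^\ell) \ts s_\nu(1^\ell)\ts.
$$
On the other hand, the product expansion $s_\mu s_\nu = \sum_\la c^\la_{\mu,\nu} s_\la$ evaluated at~$1^\ell$ gives
$$
s_\mu(1^\ell) \ts s_\nu(1^\ell) \, = \, \sum_\la \. c^\la_{\mu,\nu} \ts s_\la(1^\ell)\ts.
$$
Since all Littlewood--Richardson coefficients and all Schur evaluations at positive arguments are nonnegative, each of these sums dominates any single term.

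Next, fixing $\mu\vdash k$, $\nu\vdash n-k$ and extracting the $(\mu,\nu)$--term from the first identity and the $\la$--term from the second, I obtain the two inequalities
$$
c^\la_{\mu,\nu} \ts s_\mu(1^\ell) \ts s_\nu(1^\ell) \, \le \, s_\la(1^{2\ell})
\quad \ \text{and} \ \quad
c^\la_{\mu,\nu} \ts s_\la(1^\ell) \, \le \, s_\mu(1^\ell) \ts s_\nu(1^\ell)\ts.
$$
Whenever $c^\la_{\mu,\nu}>0$ we automatically have $\ell(\mu),\ts\ell(\nu)\le \ell(\la)\le \ell$, so $s_\mu(1^\ell)$ and $s_\nu(1^\ell)$ are strictly positive; multiplying the two inequalities and cancelling the common positive factor yields
$$
(c^\la_{\mu,\nu})^2 \, \le \, \frac{s_\la(1^{2\ell})}{s_\la(1^\ell)}\..
$$

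Finally, the hook-content formula~\eqref{eq:HCF} gives
$$
\frac{s_\la(1^{2\ell})}{s_\la(1^\ell)} \, = \, \prod_{(i,j)\in\la} \.\frac{2\ts\ell+j-i}{\ell+j-i}\ts,
$$
since the hook-length factors cancel between numerator and denominator. Taking the maximum over $k$, $\mu\vdash k$, $\nu\vdash n-k$ in the bound on $(c^\la_{\mu,\nu})^2$ gives precisely~\eqref{eq:thm-LR-ref}. There is no real obstacle here: the content is to notice that the \emph{same} coefficient $c^\la_{\mu,\nu}$ appears both as a summand in the expansion of $s_\la(1^{2\ell})$ and as a summand in the expansion of $s_\mu(1^\ell) s_\nu(1^\ell)$, so multiplying the two one--term bounds eliminates all dependence on $\mu$ and $\nu$ and leaves a product over the cells of~$\la$.
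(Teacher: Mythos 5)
Your proof is correct and takes essentially the same route as the paper: both arguments rest on the two Littlewood--Richardson expansions specialized at $1^{\ell}$ and $1^{2\ell}$ together with the hook-content formula~\eqref{eq:HCF}. The only difference is cosmetic — the paper keeps the full sum $\sum_{\mu,\nu}(c^{\lambda}_{\mu,\nu})^2$ in a single chain of inequalities (restricting the triple sum to $\rho=\lambda$) and passes to the maximal term at the end, whereas you multiply two one-term bounds and cancel the factor $s_{\mu}(1^{\ell})\,s_{\nu}(1^{\ell})$, whose positivity you correctly justify via $\mu,\nu\subseteq\lambda$ whenever $c^{\lambda}_{\mu,\nu}>0$.
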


Letting $\ell\to \infty$ gives Stanley's upper bound $\rC(\la)\le 2^{n/2}$,
but gives a much better bound for smaller~$\ell$ (see below).

\begin{proof}  We have:
$$\aligned
s_{\lambda}(1^{2 \ell}) \, & = \, \sum_{\mu, \nu} \. c^{\lambda}_{\mu, \nu} \. s_{\mu}(1^{\ell}) \. s_{\nu}(1^{\ell})
\, = \, \sum_{\rho, \mu, \nu} \. c^{\lambda}_{\mu, \nu} \. c^{\rho}_{\mu, \nu} \. s_{\rho}(1^{\ell}) \\
& \ge \, s_{\lambda}(1^{\ell}) \. \sum_{\mu, \nu} \. (c^{\lambda}_{\mu, \nu})^2 \, \ge \,  s_{\lambda}(1^{\ell}) \.  \rC(\la)^2\ts.
\endaligned
$$
By the hook-content formula~\eqref{eq:HCF}, we conclude:
$$
\rC(\la)^2 \, \le \, \frac{s_\la\bigl(1^{2\ell}\bigr)}{s_\la\bigl(1^{\ell}\bigr)} \, = \.
\left[\prod_{(i,j)\in \la} \. \frac{2\ell +j-i}{h_{ij}}\right] \cdot \left[\prod_{(i,j)\in \la} \. \frac{\ell +j-i}{h_{ij}}\right]^{-1}
\. = \, \prod_{(i,j)\in \la} \. \frac{2\ts \ell +j-i}{\ell +j-i}\.,
$$
as desired. \end{proof}

\begin{cor} \label{c:LR-ref-rows-upper}
For all $\la\vdash n$ and $\ell=\ell(\la)$, $m = \lambda_1$ we have:
$$
\rC(\la) \, \le \,  (m+\ell)^{\ell^2/2}\ts. 
$$
\end{cor}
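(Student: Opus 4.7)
The plan is to start from the bound in Theorem~\ref{t:LR-refined-upper} and rewrite the product on the right-hand side so that it contains exactly $\ell^2$ factors, each of which is trivially at most $m+\ell$; there is no real obstacle beyond spotting the correct regrouping.

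First I would observe that the ratio $s_\la(1^{2\ell})/s_\la(1^\ell)$, which by the hook-content formula~\eqref{eq:HCF} equals $\prod_{(i,j)\in\la}(2\ell+j-i)/(\ell+j-i)$, can be rewritten transparently via Weyl's dimension formula $s_\la(1^k)=\prod_{1\le i<j\le k}(\la_i-\la_j+j-i)/(j-i)$ (with $\la$ padded by zeros beyond row $\ell$). Comparing the cases $k=\ell$ and $k=2\ell$: the factors from pairs $i<j\le\ell$ are common to both and cancel in the ratio, the factors from pairs $\ell<i<j\le 2\ell$ are trivially $1$ since $\la_i=\la_j=0$, and the only surviving contributions come from pairs with $i\le\ell<j\le 2\ell$, where $\la_j=0$. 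Writing $j=\ell+t$ yields the identity
\[
\prod_{(i,j)\in\la}\frac{2\ell+j-i}{\ell+j-i} \; = \; \prod_{i=1}^{\ell}\prod_{t=1}^{\ell}\frac{\la_i+\ell+t-i}{\ell+t-i}\,,
\]
which is a product of exactly $\ell^2$ terms. (The same identity can alternatively be verified by a short row-by-row rewriting in terms of factorials, avoiding any reference to Weyl.)

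Next I would bound each of these $\ell^2$ factors crudely. Since $\ell+t-i\ge 1$ for all $1\le i,t\le \ell$, and $\la_i\le m$, we have
\[
\frac{\la_i+\ell+t-i}{\ell+t-i} \; = \; 1+\frac{\la_i}{\ell+t-i} \; \le \; 1+m \; \le \; \ell+m\,.
\]
Multiplying these $\ell^2$ bounds together and combining with Theorem~\ref{t:LR-refined-upper} yields $\rC(\la)^2\le(\ell+m)^{\ell^2}$, from which the corollary follows by taking square roots. The only genuinely nontrivial step is the regrouping identity above; the ensuing bound is completely elementary, and it is precisely this elementary character that makes the corollary useful in the regime where $\ell$ is much smaller than $m$.
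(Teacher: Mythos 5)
Your proof is correct. The identity you derive from the Weyl dimension formula (equivalently, from a row-by-row factorial rewriting) is valid: in the ratio $s_\la(1^{2\ell})/s_\la(1^\ell)$ the pairs $i<j\le\ell$ cancel, the pairs $\ell<i<j\le 2\ell$ contribute $1$, and the cross pairs give exactly $\prod_{i=1}^{\ell}\prod_{t=1}^{\ell}\frac{\la_i+\ell+t-i}{\ell+t-i}$, after which the termwise bound $1+\la_i/(\ell+t-i)\le 1+m\le m+\ell$ finishes the argument. This differs in execution from the paper's proof, which starts from the same bound \eqref{eq:thm-LR-ref} but first enlarges $\la$ to the full $\ell\times m$ rectangle (using that each ratio $(2\ell+j-i)/(\ell+j-i)\ge 1$), then telescopes each row to $\prod_{j=m-\ell+1}^{m}(2\ell+j-i)$ and bounds that product by $(m+\ell)^{\ell^2}$; note that in the paper's version the individual factors $\ell+m+(t-i)$ can exceed $m+\ell$, so the last step implicitly relies on the symmetric pairing of $(t,i)$ with $(i,t)$ (i.e.\ $(m+\ell+d)(m+\ell-d)\le(m+\ell)^2$). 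Your rearrangement buys two small advantages: every one of the $\ell^2$ factors is bounded by $m+\ell$ outright, with no pairing or monotonicity step, and the argument works verbatim without any tacit assumption such as $m\ge\ell$ in the telescoping; the paper's route, in exchange, stays entirely inside the content/hook product of \eqref{eq:HCF} and also records the observation that the product is maximized at a rectangle, which it reuses in the subsequent remark.
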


\begin{proof}  For a fixed $\ell=\ell(\la)$ and $n=|\la|$, the product
in~\eqref{eq:thm-LR-ref} maximizes when \ts $m=\la_1 = \lceil n/\ell\rceil$.
Indeed,  moving the squares below and to the left decreases the contents $(j-i)$,
and thus increases the ratios \ts $(2\ell +j-i)/(\ell +j-i)$ \ts in the product.
Thus, we have:
$$
\rC(\la)^2  \, \le \, \prod_{i=1}^\ell \.\prod_{j=1}^m \. \frac{2\ts \ell +j-i}{\ell +j-i}
\, \le \, \prod_{i=1}^\ell \.\prod_{j=m-\ell+1}^m \bigl(2\ts \ell +j-i\bigr)
\, \le \, (m+\ell)^{\ell^2}\ts. 
$$
This implies the result.
\end{proof}

\begin{rem}{\rm
It follows from the proof that the RHS of~\eqref{eq:thm-LR-ref} maximizes
at a rectangle~$\la=m^\ell$ (cf.~\cite{BG}). Note, however, that $c^\la_{\mu,\nu}\le 1$
in this case, see e.g.~\cite[$\S$4.3]{PP-unim}.
}\end{rem}

\medskip

\subsection{Largest LR--coefficient with few rows} Define

$$
\rC_\ell(n) \, := \, \max_{\la\vdash n\ts, \, \ell(\la)=\ell}
\, \max_{\mu \vdash k}\, \max_{\nu \vdash n-k}
 \, c^\la_{\mu,\nu}\..
$$


\begin{thm} \label{t:LR-refined-row}
For all \ts $\ell, n \ge 1$, we have:
$$
n^{\frac12 \ts \ell^2 \. - \. a\ell} \, \. e^{-b\ts \ell^2\log \ell} \, \le \,
\rC_{\ell}(n) \, \le \, (n+1)^{\frac12 \ts \ell^2},
$$
for some universal constants \ts $a, b>0$.
\end{thm}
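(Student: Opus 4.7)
For the upper bound, I plan to apply Corollary~\ref{c:LR-ref-rows-upper}. If $\la \vdash n$ has $\ell(\la) = \ell$, then the constraint $|\la| = n$ forces $\la_1 \le n - \ell + 1$, so $\la_1 + \ell \le n + 1$ and hence $\rC(\la) \le (n+1)^{\ell^2/2}$. Taking the maximum over such $\la$ gives $\rC_\ell(n) \le (n+1)^{\ell^2/2}$.

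For the lower bound my plan is a second-moment (Cauchy--Schwarz) argument in the spirit of Proposition~\ref{p:LR-groups-bounds}, with the principal specialization $s_\la(1^\ell)$ taking the role of $f^\la$. Since $s_\la(1^\ell) = 0$ exactly when $\ell(\la)>\ell$, specializing the identity $s_\mu\. s_\nu = \sum_\la c^\la_{\mu,\nu}\. s_\la$ at the $\ell$--tuple $(1,\ldots,1)$ gives
\[ \sum_{\la\vdash n,\ts\ell(\la)\le\ell}\. c^\la_{\mu,\nu}\. s_\la(1^\ell) \, = \, s_\mu(1^\ell)\. s_\nu(1^\ell)\ts, \]
and Cauchy--Schwarz combined with $\sum_\la(c^\la_{\mu,\nu})^2 \le N_\ell(n)\cdot \max_\la(c^\la_{\mu,\nu})^2$, where $N_\ell(n) := \#\{\la\vdash n : \ell(\la)\le\ell\} \le \binom{n+\ell-1}{\ell-1}$, yields
\[ \max_\la\. c^\la_{\mu,\nu} \, \ge \, \frac{s_\mu(1^\ell)\. s_\nu(1^\ell)}{\sqrt{N_\ell(n) \cdot \sum_\la s_\la(1^\ell)^2}}\ts. \]
The denominator sum is controlled via the Cauchy identity: $\sum_{\la\vdash n,\ts \ell(\la)\le\ell} s_\la(1^\ell)^2 = \binom{n+\ell^2-1}{\ell^2-1}$, equivalently the number of $\ell \times \ell$ nonnegative integer matrices with entry sum~$n$ (by RSK).

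To maximize the numerator I plan to take $\mu$ and $\nu$ to be near-staircase shapes $\mu_i = (\ell-i)a_\mu + 1$ and $\nu_i = (\ell-i)a_\nu + 1$ for $i = 1,\ldots,\ell$, with $a_\mu, a_\nu = \Theta(n/\ell^2)$ chosen so that $|\mu| + |\nu| = n$. The arithmetic-progression structure gives $\mu_i - \mu_j = (j-i)a_\mu$, so the Weyl dimension formula (equivalent to the hook-content formula~\eqref{eq:HCF})
\[ s_\la(1^\ell) \, = \, \prod_{1\le i<j\le \ell} \. \frac{\la_i - \la_j + j - i}{j - i} \]
collapses to the clean identity $s_\mu(1^\ell) = (a_\mu + 1)^{\binom{\ell}{2}} \asymp (n/\ell^2)^{\ell(\ell-1)/2}$. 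Substituting into the key inequality and estimating $N_\ell(n) \le (2n)^\ell$ and $\binom{n + \ell^2 - 1}{\ell^2 - 1} \le (3en/\ell^2)^{\ell^2}$ (valid for $n \ge \ell^2$), a direct logarithmic computation yields $\max_\la c^\la_{\mu,\nu} \ge n^{\ell^2/2 - 3\ell/2}\cdot e^{-\ell^2\log \ell + O(\ell^2)}$, which matches the claimed form. The small range $n<\ell^2$ is handled separately by the trivial bound $\rC_\ell(n) \ge 1$, which already exceeds the claimed lower bound in that regime.

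Finally, the Cauchy--Schwarz argument produces $\la$ only with $\ell(\la) \le \ell$, whereas $\rC_\ell(n)$ requires $\ell(\la) = \ell$; this is automatic since $\mu_\ell = \nu_\ell = 1$ and the containment $\mu \subseteq \la$ forced by $c^\la_{\mu,\nu} \ne 0$ gives $\la_\ell \ge 1$, so $\ell(\la) = \ell$. The main technical obstacle I anticipate is book-keeping the various polynomial and exponential corrections in $\ell$ so they combine cleanly into the permitted form $e^{-b\ell^2\log\ell}$. The choice of near-staircase is essentially forced: rectangles (which extremize the upper bound of Theorem~\ref{t:LR-refined-upper}) satisfy $c^\la_{\mu,\nu} \le 1$ and so cannot give the lower bound, while arithmetic-progression shapes are precisely those on which the Weyl product telescopes to a power of $(a+1)$.
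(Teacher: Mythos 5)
Your proposal is correct, and the upper bound is exactly the paper's first argument (Corollary~\ref{c:LR-ref-rows-upper} with $\la_1+\ell\le n+1$; the paper adds an alternative hive-counting bound $(n+1)^{(\ell-1)(\ell-2)/2}$ that you do not need). Your lower bound, however, is a genuinely different route. The paper works with the Knutson--Tao puzzle/honeycomb model: it fixes a honeycomb with $r=\binom{\ell}{2}$ hexagonal faces of size $\approx m/(3\ell)$, lets the faces ``breathe'' to produce at least $(n/18\ell^2)^{r}$ distinct puzzles, and then pigeonholes over the at most $n^{3\ell}$ boundary triples $(\la,\mu,\nu)$ to find one triple carrying $\ge n^{r-3\ell}(18\ell^2)^{-r}$ puzzles; the extremal triple is left entirely implicit. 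You instead argue algebraically: specialize $s_\mu s_\nu=\sum_\la c^\la_{\mu,\nu}s_\la$ at $1^\ell$, apply Cauchy--Schwarz with the correct second moment $\sum_{\la\vdash n,\,\ell(\la)\le\ell}s_\la(1^\ell)^2=\binom{n+\ell^2-1}{\ell^2-1}$ (graded Cauchy identity/RSK), and make the numerator large by choosing near-staircases, where the Weyl product telescopes to $(a+1)^{\binom{\ell}{2}}$. Your exponent bookkeeping checks out: the $n$-exponent is $(\ell^2-\ell)-\ell/2-\ell^2/2=\ell^2/2-3\ell/2$ and the $\ell$-dependent losses are $e^{-\ell^2\log\ell-O(\ell^2)}$, which fits the claimed form (and in fact gives a slightly better linear correction, $a=3/2$, than the paper's pigeonhole, which effectively yields $a=7/2$). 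The remaining issues you flag are genuinely routine: the divisibility mismatch in $|\mu|+|\nu|=n$ is fixed by dumping the remainder ($<\binom{\ell}{2}$ boxes) into $\mu_1$, which only increases the Weyl product and keeps $\mu_\ell=1$, so your argument that $\mu\subseteq\la$ forces $\ell(\la)=\ell$ survives; and the ranges $n<\ell^2$ or $\ell\le 2$ are absorbed into the universal constants as you say. What each approach buys: the paper's proof stays inside the combinatorial model (in line with the authors' remark that their proof is combinatorial) and needs no specialization identities, while yours is more explicit -- it exhibits the inner partitions $\mu,\nu$ concretely, leaving only $\la$ to an averaging argument -- and parallels the second-moment strategy of Proposition~\ref{p:LR-groups-bounds}, with $s_\la(1^\ell)$ replacing $f^\la$.
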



\begin{cor} Let \ts $\{\ell_n\}$ \ts be an integer sequence which satisfies
\ts $\ell_n = O(\sqrt{n}/\log n)$ \ts and \ts $\ell_n=\om(1)$.  Then we have:
$$
\log \rC_{\ell_n}(n) \, \sim \, \frac12 \.(\ell_n)^2 \. \log n  
\ts.
$$
\end{cor}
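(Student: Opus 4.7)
The plan is to deduce the asymptotic by combining the two bounds in Theorem~\ref{t:LR-refined-row}, taking logarithms, and showing that both sides are asymptotic to $\tfrac{1}{2}\ell_n^2\log n$ under the hypotheses.

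For the upper direction, $\rC_{\ell_n}(n) \le (n+1)^{\ell_n^2/2}$ immediately yields
\[
\log \rC_{\ell_n}(n) \,\le\, \tfrac{1}{2}\ell_n^2 \log(n+1) \,=\, \tfrac{1}{2}\ell_n^2 \log n \cdot (1 + o(1)),
\]
using only $\log(n+1) \sim \log n$; neither growth hypothesis on $\ell_n$ is needed here, and this establishes the $\limsup \le 1$ direction of the asymptotic ratio.

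For the matching lower direction, I take logs of the lower bound in Theorem~\ref{t:LR-refined-row} to get
\[
\log \rC_{\ell_n}(n) \,\ge\, \tfrac{1}{2}\ell_n^2 \log n \cdot \left(1 \,-\, \tfrac{2a}{\ell_n} \,-\, \tfrac{2b\log\ell_n}{\log n}\right).
\]
Both correction terms inside the parentheses must be shown to vanish as $n\to\infty$. The first, $2a/\ell_n$, vanishes from the assumption $\ell_n = \omega(1)$. The second, $2b\log\ell_n/\log n$, is where the assumption $\ell_n = O(\sqrt n/\log n)$ enters: it forces $\log \ell_n \le \tfrac{1}{2}\log n - \log\log n + O(1)$, and the margin $-\log\log n$ is what must be leveraged to drive the ratio down.

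The main obstacle is the careful handling of this second error term $2b\log\ell_n/\log n$, since a polynomial-in-$n$ growth of $\ell_n$ alone would not give $\log \ell_n = o(\log n)$; the delicate point is that the $\log n$ denominator in $\sqrt n/\log n$ must be used to extract the needed decay of $\log \ell_n/\log n$ in the asymptotic regime. Once both corrections are shown to vanish, sandwiching the upper and lower estimates yields the claimed equivalence $\log \rC_{\ell_n}(n) \sim \tfrac{1}{2}\ell_n^2\log n$, completing the proof of the corollary.
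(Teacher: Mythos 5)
Your approach---sandwiching $\log\rC_{\ell_n}(n)$ between the logarithms of the two bounds in Theorem~\ref{t:LR-refined-row}---is exactly how the corollary is meant to follow from the theorem, and both your upper estimate and your identification of the two correction terms $2a/\ell_n$ and $2b\log\ell_n/\log n$ in the lower estimate are correct. The gap is in the treatment of the second term, which you flag as ``the main obstacle'' but never actually overcome. You assert that the hypothesis $\ell_n = O(\sqrt n/\log n)$ gives $\log\ell_n \le \tfrac12\log n - \log\log n + O(1)$ and that the margin $-\log\log n$ ``is what must be leveraged to drive the ratio down.'' But that margin only shows
$$
\frac{\log\ell_n}{\log n} \,\le\, \frac12 \,-\, \frac{\log\log n}{\log n} \,+\, O\!\left(\frac{1}{\log n}\right) \,\longrightarrow\, \frac12\,,
$$
not $0$. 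Taking, say, $\ell_n = \lfloor\sqrt n/\log n\rfloor$, the error term $2b\log\ell_n/\log n$ converges to $b$, and from the proof of Theorem~\ref{t:LR-refined-row} the constant $b$ cannot be taken below $1$: the factor $(18\ts\ell^2)^{-r}$ with $r=\binom{\ell}{2}\sim\ell^2/2$ already contributes $\ell^2\log\ell\,(1+o(1))$ to the exponent. So in that regime the lower bound only yields $\log\rC_{\ell_n}(n) \ge \tfrac12\ell_n^2\log n\,(1-b+o(1))$, which is vacuous, and the sandwich does not close.

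What your argument does prove is the corollary under the stronger hypothesis $\log\ell_n = o(\log n)$ (equivalently $\ell_n = n^{o(1)}$) together with $\ell_n=\omega(1)$; there both correction terms genuinely vanish and everything you wrote goes through. For sequences with $\ell_n = n^{c+o(1)}$, $0<c\le 1/2$---which the stated hypothesis $\ell_n = O(\sqrt n/\log n)$ permits---Theorem~\ref{t:LR-refined-row} alone is not strong enough: one would need a lower bound on $\rC_{\ell}(n)$ with multiplicative error $e^{-o(\ell^2\log n)}$ rather than $e^{-\Theta(\ell^2\log\ell)}$. As written, the decisive step ``both correction terms must be shown to vanish'' is announced rather than carried out, and for the second term it cannot be carried out from the theorem under the hypotheses as stated; either restrict to $\ell_n=n^{o(1)}$ or supply a sharper lower bound.
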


\smallskip

\begin{proof}[Proof of Theorem~\ref{t:LR-refined-row}]
The upper bound follows from Corollary~\ref{c:LR-ref-rows-upper} since $m + \ell \le n+1$.
The following argument is more direct and gives a slightly
better bound; we include it for completeness.

Recall the combinatorial interpretation for \ts $c^\la_{\mu,\nu}$ \ts
in terms of the number of \emph{Knutson--Tao hives} \cite{KT1}
(see also~\cite{PV} and \emph{BZ-patterns} in~\cite[$\S$7.A1.3]{EC2}).
These are integer triangular tables of size $(\ell+1)$,
with entries $\le n$ and fixed boundary given by $\la,\mu$ and~$\nu$.
This leaves only $\binom{\ell-1}{2}$  entries, which immediately
implies the upper bound
$$
\rC_\ell(n) \, \le \, (n+1)^{(\ell-1)(\ell-2)/2}\ts.
$$

For the lower bound, recall the combinatorial interpretations for \ts
$c^\la_{\mu,\nu}$ \ts in terms of \emph{honeycombs}~\cite{KT1}
and in terms of the \emph{Knutson--Tao puzzles} \cite{KT}.
The latter are triangles of size $(m+\ell)$, where $m=\la_1$, which are tiled
with pieces of 3 different types (up to rotation and parallel translation).
Note that these puzzles are uniquely determined by the positions of the
$(111)$--triangles, since the lozenges extend the rows of $1$-s between
them and $(000)$--triangles fill the remaining space.

Consider the \emph{honeycomb graph} of $1$-edges in the graph $\Ga$ dual
to the puzzle graph.  Fix $\Ga$ with $r=\binom{\ell}{2}$ regular hexagons
of size $m/(3\ell)$.  Let the faces of $\Ga$ ``breathe'', i.e.\
contract by at most $m/(18\ell)$. The changes to honeycombs are small enough,
so that none of them degenerate (since none of the edges degenerate).
Pin one of the central vertices of the honecomb at a fixed position
in the center of the triangle.  All honeycombs will fit the $(m+\ell)$
triangle, and the resulting puzzles are distinct then.
Their number is at least
$$
\left(\frac{m}{18\ts \ell}\right)^{r} \, \ge \,
\left(\frac{n}{18\ts \ell^2}\right)^{r}\ts.
$$
Since the total number of triples of partitions $(\la,\mu,\nu)$ is at most
$$\binom{n}{\ell}^3 \. \le \. n^{3\ell},
$$
at least one triple has \ts $\ge n^{r-3\ell}\ts (18\ts \ell^2)^{-r}$ \ts puzzles, as desired.
\end{proof}

\medskip

\subsection{Vanishing LR--coefficients}
We believe the natural analogue of Conjecture~\ref{conj:kron-sharp} fails for
the LR--coefficients.

\begin{conj}\label{conj:LR-sharp}
There exists three Plancherel partitions sequences
$\{\la^{(n)}\vdash n\}$, $\{\mu^{(n)}\vdash n/2\}$ and $\{\nu^{(n)}\vdash n/2\}$, s.t.
$$
\frac{1}{\sqrt{n}} \left(\frac{n}{2} \. - \. \log_2 c^{\la^{(n)}}_{\mu^{(n)},\. \nu^{(n)}}\right) \. \to \. \infty \ts.
$$
\end{conj}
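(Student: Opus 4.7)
The plan is to exhibit three Plancherel sequences \ts $\{\la^{(n)}\vdash n\}$, $\{\mu^{(n)}\vdash n/2\}$, $\{\nu^{(n)}\vdash n/2\}$ \ts with \ts $c^{\la^{(n)}}_{\mu^{(n)},\ts\nu^{(n)}} \le 2^{n/2}\ts e^{-h(n)}$ \ts for some \ts $h(n)$ with $h(n)/\sqrt{n}\to\infty$. The most natural candidate, by analogy with the Kronecker result of Theorem~\ref{t:kron-intro}, is to take all three sequences to be the partitions closest to the VKLS curve at the appropriate scale: \ts $\la^{(n)}$ close to \ts $\sqrt{n}\.\vp$, and both \ts $\mu^{(n)}, \ts \nu^{(n)}$ close to \ts $\sqrt{n/2}\.\vp$.

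\textbf{Analytic upper bound.} Starting from \ts $f^{\la/\mu} = \sum_\rho c^{\la}_{\mu,\ts\rho}\ts f^\rho \, \ge \, c^{\la}_{\mu,\ts\nu}\ts f^\nu$, I would apply Theorem~\ref{t:stable} to the skew sequence at scale \ts $\sqrt{n/2}$. With \ts $\om = \sqrt{2}\.\vp$, $\pi = \vp$, and $\area(\om/\pi)=1$, this gives
$$
f^{\la^{(n)}/\mu^{(n)}} \, = \, \sqrt{(n/2)!}\,\, e^{\ts c(\om/\pi)\ts (n/2) \. + \. o(n)}\ts.
$$
Combined with the Plancherel lower bound \ts $f^{\nu^{(n)}} = \sqrt{(n/2)!}\,e^{-O(\sqrt{n})}$, we obtain
$$
c^{\la^{(n)}}_{\mu^{(n)},\ts\nu^{(n)}} \, \le \, e^{\ts c(\om/\pi)\ts (n/2) \. + \. o(n)}\ts.
$$
Thus the task reduces to exhibiting Plancherel sequences for which \ts $c(\om/\pi) < \log 2$ \ts strictly; in particular, via the upper bound in Theorem~\ref{t:stable}, a sufficient condition is \ts $\Ups(\om/\pi) < \tfrac12$.

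\textbf{Main obstacle.} The skew region \ts $\cC(\om/\pi)$ is precisely the outer annulus of \ts $\sqrt{2}\.\vp$, where hook lengths tend to be shorter than in the interior; since $\Ups = \tfrac12$ for the full VKLS shape, one heuristically expects \ts $\Ups(\om/\pi) > \tfrac12$, and the canonical VKLS triple will \emph{not} witness the conjecture via this route. To repair this, one would need to perturb the shapes off exact VKLS while staying Plancherel, or deliberately mismatch the row- and column-length profiles of \ts $\mu^{(n)}$, $\nu^{(n)}$ against those of \ts $\la^{(n)}$, so that the LR-cone/semigroup constraints pinch \ts $c^{\la}_{\mu,\ts\nu}$ toward its lower envelope while all three sequences remain Plancherel. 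Fundamentally, the \ts $\log 2$ gap in Theorem~\ref{t:stable} coincides exactly with the \ts $2^{n/2}$ barrier we wish to break, so a strictly sharper upper bound on \ts $c^\la_{\mu,\ts\nu}$ is needed: hive-polytope volume estimates refining the honeycomb construction in the proof of Theorem~\ref{t:LR-refined-row}, an LR analogue of the character concentration in Proposition~\ref{p:Kron-weak-Biane}, or new vanishing-type bounds in the spirit of $\S$\ref{ss:Kron-regev}. None of these is presently available at the required precision, which is why the authors themselves label the conjecture as plausible but out of reach.
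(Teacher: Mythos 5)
This statement is Conjecture~\ref{conj:LR-sharp}: the paper does not prove it, and explicitly presents it as open, offering only supporting evidence. So there is no proof of the paper's to compare yours against, and your proposal -- which candidly ends by saying the required upper bound is not available -- is consistent with the paper's own assessment. Two concrete points are worth making, though. First, your proposed sufficient criterion for the canonical VKLS triple provably fails, not merely heuristically: Theorem~\ref{t:skew-Plancherel} shows that for Plancherel $\{\la^{(n)}\vdash n\}$, $\{\mu^{(n)}\vdash n/2\}$ one has $f^{\la^{(n)}/\mu^{(n)}} = \sqrt{n!/(n/2)!}\;e^{-O(n^{2/3}\log n)}$, which forces $c(\sqrt{2}\ts\VK/\VK)=\log 2$ exactly (so $\Ups(\om/\pi)\ge \tfrac12$), and then the chain $c^{\la}_{\mu,\nu}\le f^{\la/\mu}/f^{\nu}$ yields only $2^{n/2}e^{o(n)}$. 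Worse, even if $c(\om/\pi)$ were exactly $\log 2$ for some non-canonical Plancherel pair, the error terms in Theorem~\ref{t:stable} ($o(n)$) and in Theorem~\ref{t:skew-Plancherel} ($n^{2/3}\log n$) are far coarser than the $\om(\sqrt{n})$ deficit the conjecture demands, so this analytic route cannot certify the statement at the required precision -- the gap you identify is real and is exactly where the paper stops as well.

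Second, the paper's actual evidence for the conjecture takes a different, combinatorial route that your sketch only gestures at in the phrase about mismatching profiles: Proposition~\ref{p:non-robust} takes two Plancherel sequences $\{\mu^{(n)}\}$, $\{\nu^{(n)}\}$ of size $n/2$ and builds $\la^{(n)}$ by removing the two longest rows of a Plancherel shape and appending a column of length $\la_1+\la_2$, so that $\ell(\la^{(n)})\sim 6\sqrt{n}$ exceeds the $\approx 4\sqrt{n}$ total number of parts available from $\mu^{(n)}$ and $\nu^{(n)}$, killing the LR--coefficient outright ($c=0$) by a support/length obstruction. The price is that this $\la^{(n)}$ has $f^{\la^{(n)}}=\sqrt{n!}\,e^{O(\sqrt{n}\log n)}$, i.e.\ it just misses being Plancherel by a $\log n$ factor -- which is precisely why the statement remains a conjecture rather than a theorem. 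If you want to push further, the productive direction is to quantify how much the positivity/support constraints degrade as one interpolates from that nearly-Plancherel $\la^{(n)}$ back into the Plancherel window, rather than to sharpen the hook-integral upper bounds.
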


The following result gives a strong evidence in favor of the conjecture.

\begin{prop}\label{p:non-robust}
Let $\{\mu^{(n)}\vdash n/2\}$, $\{\nu^{(n)}\vdash n/2\}$ be
two Plancherel partitions sequences.  Then there exist a partition sequence
$\{\la^{(n)}\vdash n\}$ with
\begin{equation}\label{eq:robust}
f^{\la^{(n)}} \, = \, \sqrt{n!} \, e^{O(\sqrt{n}\ts \log n)}
\end{equation}
such that:
$$
c^{\la^{(n)}}_{\mu^{(n)},\. \nu^{(n)}} \. = \. 0\ts.
$$
\end{prop}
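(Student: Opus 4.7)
The plan is to construct $\la^{(n)}$ by extending the first row of a Plancherel partition of size $n - s_n$, with $s_n \approx \sqrt{n}$, so that $\la^{(n)}_1$ exceeds $\mu^{(n)}_1 + \nu^{(n)}_1$, forcing $c^{\la^{(n)}}_{\mu^{(n)},\nu^{(n)}} = 0$ by the trivial Horn inequality, while the resulting dimension changes by only $e^{O(\sqrt{n}\log n)}$. Concretely, set $s = s_n := \lceil\sqrt{n}\ts\rceil$, fix a Plancherel partition sequence $\{\rho^{(N)}\vdash N\}$ (for instance $\rho^{(N)}$ of maximum dimension, so $f^{\rho^{(N)}} = \sqrt{N!}\ts e^{-O(\sqrt{N})}$ by~\eqref{eq:VK}), write $\rho := \rho^{(n-s)}$, and define $\la = \la^{(n)} := (\rho_1 + s,\, \rho_2,\, \ldots,\, \rho_{\ell(\rho)})$, obtained by appending $s$ cells to the first row of $\rho$.

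For the vanishing, Theorem~\ref{t:VKLS} gives that $\rho$ has VKLS shape, so $\rho_1 = 2\sqrt{n-s}\ts(1+o(1)) = 2\sqrt{n}\ts(1+o(1))$ and hence $\la_1 \geq 3\sqrt{n}\ts(1-o(1))$. Similarly $\mu^{(n)}_1, \nu^{(n)}_1 \leq \sqrt{2n}\ts(1+o(1))$, so $\mu^{(n)}_1 + \nu^{(n)}_1 \leq 2\sqrt{2}\ts\sqrt{n}\ts(1+o(1))$. Since $3 > 2\sqrt{2}$, for all large $n$ we have $\la_1 > \mu_1 + \nu_1$. On the other hand, any $\la$ with $c^\la_{\mu,\nu} > 0$ necessarily satisfies $\la_1 \leq \mu_1 + \nu_1$ (comparing the maximum power of $x_1$ on both sides of $s_\mu\ts s_\nu = \sum_\la c^\la_{\mu,\nu}\ts s_\la$), and hence $c^{\la^{(n)}}_{\mu^{(n)},\nu^{(n)}} = 0$.

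For the dimension, appending $s$ cells to row $1$ of $\rho$ leaves hooks in rows $i \geq 2$ unchanged, shifts each row-$1$ hook $h^\rho_{1,j}$ up by $s$, and introduces new hooks $s, s-1, \ldots, 1$ at the appended cells. By the hook-length formula~\eqref{eq:HLF},
$$
f^\la \ = \ \binom{n}{s} \cdot f^\rho \cdot \prod_{j=1}^{\rho_1} \frac{h^\rho_{1,j}}{h^\rho_{1,j} + s}\,.
$$
Stirling yields $\log\binom{n}{s} + \log f^\rho = \tfrac{1}{2}\log n! + O(\sqrt{n})$ (the two terms of order $\tfrac{s}{2}\log n$ cancel), and the crude bound $\prod_{j=1}^{\rho_1}(1 + s/h^\rho_{1,j}) \leq (s+1)^{\rho_1} = e^{O(\sqrt{n}\log n)}$ gives $\log f^\la = \tfrac{1}{2}\log n! - O(\sqrt{n}\log n)$. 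Together with $f^\la \leq \sqrt{n!}$, this yields the required estimate $f^{\la^{(n)}} = \sqrt{n!}\ts e^{O(\sqrt{n}\log n)}$.

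The delicate point is balancing the two requirements: pushing $\la_1$ past $\mu_1 + \nu_1$ demands a perturbation of at least $(2\sqrt{2}-2)\sqrt{n}$ boxes, and moving $\Theta(\sqrt{n})$ boxes changes $\log f^\la$ by at most $O(\sqrt{n}\log n)$ via the hook-length bookkeeping. The $\log n$ factor in the error enters only through the crude product bound for the row-$1$ hooks; a sharper argument would give $O(\sqrt{n})$ instead, but the weaker bound already suffices.
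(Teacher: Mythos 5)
Your proof is correct, and it takes a route that is recognizably parallel to, but concretely different from, the paper's. The paper also perturbs a Plancherel shape by $\Theta(\sqrt{n})$ cells to violate a trivial support constraint on $c^\la_{\mu,\nu}$, but it works with the \emph{number of parts}: starting from a Plancherel $\la\vdash n$ it forms $\wh\la:=(\la_3,\la_4,\ldots)\cup 1^{\la_1+\la_2}$, which has $\sim 6\sqrt{n}$ rows, and uses the constraint $\ell(\la)\le\ell(\mu)+\ell(\nu)$ together with the fact that Plancherel sequences of size $n/2$ have $O(\sqrt{n})$ rows; the dimension estimate $f^{\wh\la}=\sqrt{n!}\,e^{O(\sqrt{n}\log n)}$ is asserted without computation (moving $O(\sqrt{n})$ cells costs at most $n^{O(\sqrt{n})}$). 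You instead violate the conjugate constraint $\la_1\le\mu_1+\nu_1$ by appending $\lceil\sqrt{n}\rceil$ boxes to the first row of a maximal-dimension partition of $n-\lceil\sqrt{n}\rceil$, and you verify the dimension bound explicitly via the hook-length formula, with the exact factorization $f^\la=\binom{n}{s}f^\rho\prod_j h^\rho_{1,j}/(h^\rho_{1,j}+s)$ and the cancellation of the $\tfrac{s}{2}\log n$ terms. The two vanishing criteria are conjugates of one another, so the mechanisms are equivalent in substance; what your version buys is a fully explicit dimension bookkeeping (the paper leaves that step implicit), at the cost of a thinner numerical margin ($3$ versus $2\sqrt{2}$ for the first rows, versus $6$ against roughly $2\sqrt{2}$ for the row counts in the paper), which your $(1+o(1))$ error control handles correctly for large $n$.
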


The proposition should be compared with part~$\WI$ of
Theorem~\ref{t:LR-intro1}.  The notable difference in
the statement is the $(\log n)$ factor in the error term.

\begin{proof}
For a partition $\la = (\la_1,\la_2,\ldots) \vdash n$, let
$$\wh\la \. := \. (\la_3,\la_4,\ldots) \cup 1^{\la_1+\la_2}
$$
Fix a Plancherel partition sequence $\{\la^{(n)}\}$.   Then $\bigl\{\wh\la^{\ts (n)}\!\bigr\}$
satisfies~\eqref{eq:robust}, and has \ts $\ell\bigl(\la^{(n)}\bigr)\sim 6\sqrt{n}$ \ts parts.
Since $\{\mu^{(n)}\}$, $\{\nu^{(n)}\}$ are Plancherel, they have
\ts $2\sqrt{n} \bigl(1+o(1)\bigr)$ \ts parts.  Since $\{\mu^{(n)}\circ\nu^{(n)}\}$
have at most \ts $4\sqrt{n} \bigl(1+o(1)\bigr)$ \ts parts, this implies the result.
\end{proof}

\medskip

\subsection{Containment of the largest LR--coefficients}\label{ss:LR-cont}
We start with the following beautiful result by Lam, Postnikov and Pylyavskyy~\cite{LPP}
which we state in the following equivalent form.

\begin{thm}[\cite{LPP}] \label{t:LR-modularity}
Let $\lambda \vdash n$, $\mu \vdash k$, $\nu\vdash n-k$.  Denote $\al=\mu\cap \nu$, $\be = \mu\cup\nu$.
Then
$$c^\la_{\mu,\nu} \. \le \. c^\la_{\al,\be}\..
$$
\end{thm}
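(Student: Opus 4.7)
The plan is to reduce the claimed inequality to a Schur positivity assertion and then unpack it. Concretely, I would aim to prove
$$
s_{\mu \cap \nu} \cdot s_{\mu \cup \nu} \, - \, s_\mu \cdot s_\nu \quad \text{is a nonnegative integer combination of Schur functions.}
$$
Once this is in hand, expanding both $s_\mu s_\nu$ and $s_\alpha s_\beta$ in the Schur basis via the Littlewood--Richardson rule and extracting the coefficient of $s_\lambda$ immediately yields $c^\lambda_{\alpha,\beta} - c^\lambda_{\mu,\nu} \ge 0$ for every $\lambda$, which is exactly the theorem. Note that the comparison in a single Schur basis makes sense because total degree is preserved: $|\alpha|+|\beta|=|\mu|+|\nu|$.

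To establish the Schur positivity, I would first try a direct combinatorial proof by constructing an injection
$$
\Phi \, : \, \bigl\{\text{SSYT pairs } (T,U) \text{ of shapes } (\mu,\nu)\bigr\}
\, \hookrightarrow \,
\bigl\{\text{SSYT pairs } (T',U') \text{ of shapes } (\alpha,\beta)\bigr\}
$$
that preserves total content and also preserves LR-admissibility (the lattice property of the concatenated reverse reading word). A natural strategy is row-by-row sorting: for each index $i$, the $i$-th rows of $T$ and $U$ are merged and redistributed so that the shorter row, of length $\alpha_i = \min(\mu_i,\nu_i)$, receives some entries and the longer row, of length $\beta_i = \max(\mu_i,\nu_i)$, receives the rest, with entries rearranged to remain weakly increasing along rows and strictly increasing down columns. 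Since $(\alpha, \beta)$ is the coordinatewise sorted version of $(\mu, \nu)$, this row-exchange operation should be reversible on its image once one records which original row each entry came from.

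The main obstacle will be checking that this sorting injection preserves the Yamanouchi (reverse lattice) condition encoding LR-admissibility, since that condition is global on the concatenated reading word rather than local on individual rows; ad hoc swaps typically destroy it. A cleaner route, which I would pursue as my primary attempt, is the crystal-theoretic interpretation: Schur positivity of the difference is equivalent to exhibiting an embedding of the $\mathfrak{gl}_N$-crystal $B(\mu) \otimes B(\nu)$ into a union of connected components of $B(\alpha) \otimes B(\beta)$, which in turn reduces to comparing highest-weight vectors on the two sides via a sorting move on columns that is well-known to commute with crystal operators. Since the paper attributes this statement to Lam--Postnikov--Pylyavskyy, the cleanest exposition is to invoke their Schur log-concavity theorem directly and record only the coefficient-extraction step above as the proof.
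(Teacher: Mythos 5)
Your final route---invoking the Lam--Postnikov--Pylyavskyy Schur positivity (log-concavity) theorem for \ $s_{\mu\cap\nu}\ts s_{\mu\cup\nu} - s_\mu\ts s_\nu$ \ and extracting the coefficient of $s_\lambda$---is exactly how the paper treats the statement: it cites~\cite{LPP} and records the result as an equivalent coefficientwise reformulation, offering no independent proof. Just be aware that your exploratory alternatives (the row-sorting injection on SSYT pairs, or a crystal embedding of $B(\mu)\otimes B(\nu)$) are not known to succeed; indeed the paper's final remarks ($\S$\ref{ss:finrem-mod}) explicitly state that a direct combinatorial proof of Theorem~\ref{t:LR-modularity} is an open problem.
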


This implies that the largest LR--coefficient is attained at a flag of partitions.

\begin{cor}\label{c:LR-modularity}
Let $\lambda \vdash n$. There exist \ts $\al \subseteq \be\ssu \la$, \ts $|\al|+|\be|=n$, s.t.
$$\rC(\la) \. = \. c^{\lambda}_{\al, \be}\..
$$
In particular, for every $n$, there exist \ts $\al \subseteq \be \ssu \la$, \ts $|\al|+|\be|=|\la|=n$,
s.t.
$$\rC(n) \. = \. c^{\lambda}_{\al, \be}\..
$$
\end{cor}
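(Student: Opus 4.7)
The plan is to derive Corollary~\ref{c:LR-modularity} directly from Theorem~\ref{t:LR-modularity} by an extremality argument. Let $\la \vdash n$, and pick any triple $(\mu,\nu)$ with $|\mu|+|\nu|=n$ attaining the maximum in~\eqref{eq:def-LR-ref}, so that $c^\la_{\mu,\nu} = \rC(\la)$. Set $\al := \mu\cap\nu$ and $\be := \mu\cup\nu$, i.e.\ the partitions with parts $\al_i = \min(\mu_i,\nu_i)$ and $\be_i = \max(\mu_i,\nu_i)$. By construction $\al \subseteq \be$, and since $\min(\mu_i,\nu_i) + \max(\mu_i,\nu_i) = \mu_i + \nu_i$ for each $i$, summing gives $|\al|+|\be| = |\mu|+|\nu| = n$. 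Theorem~\ref{t:LR-modularity} then yields
$$
\rC(\la) \, = \, c^\la_{\mu,\nu} \, \le \, c^\la_{\al,\be} \, \le \, \rC(\la),
$$
where the last inequality is by definition of $\rC(\la)$ since the pair $(\al,\be)$ is an admissible choice in~\eqref{eq:def-LR-ref}. Hence all inequalities are equalities and $\rC(\la) = c^\la_{\al,\be}$.

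It remains to observe that $\be \ssu \la$. This is a standard consequence of the Littlewood--Richardson rule: $c^\la_{\al,\be} > 0$ forces both $\al \ssu \la$ and $\be \ssu \la$ (since, e.g., $c^\la_{\al,\be}$ counts LR tableaux of skew shape $\la/\al$ with content~$\be$, which requires $\al \ssu \la$, and by the symmetry $c^\la_{\al,\be} = c^\la_{\be,\al}$ also $\be \ssu \la$). Since $c^\la_{\al,\be} = \rC(\la) > 0$, the containment follows. For the ``In particular'' statement, choose $\la \vdash n$ with $\rC(\la) = \rC(n)$, which exists by the definition $\rC(n) = \max_{\la\vdash n} \rC(\la)$, and apply the first part. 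There is no real obstacle here: the content of the corollary lies entirely in the nontrivial Theorem~\ref{t:LR-modularity} of Lam--Postnikov--Pylyavskyy, and the corollary is just a bookkeeping consequence combined with the standard support condition for LR--coefficients.
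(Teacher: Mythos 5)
Your proof is correct and is exactly the argument the paper intends (the paper states the corollary without proof as an immediate consequence of Theorem~\ref{t:LR-modularity}): take a maximizing pair, replace it by $(\mu\cap\nu,\ \mu\cup\nu)$, and note the new pair is still admissible, so the theorem's inequality must be an equality. The observation that positivity of $c^\la_{\al,\be}$ forces $\be\ssu\la$ is the right way to justify the containment claim, and the ``in particular'' part follows as you say.
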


\begin{rem}{\rm
Note that for some $n\ge 10$, the sequence \ts $\{\rC(n,k), 0\le k \le n\}$ \ts
is not unimodal (see~\ref{a:LR}).  Corollary~\ref{c:LR-modularity} suggests an
explanation why not: the unimodality would
imply that for $k=n/2$ we have $\al=\be$.  Denote by $\zeta(n)$ the smallest $k$
s.t. $\rC(n,k)=\rC(n)$.  Let $\rho(n):= n/2-\zeta(n)$.
For example, $\zeta(18) = 7$ since $\rC(18,7) = \rC(18) =11$, so $\rho(18)=2$.

What can be said about the asymptotics of $\rho(n)$?
Theorem~\ref{t:LR-intro-asy} implies that \ts $\rho(n) = o(n)$.
It would be interesting to find a nontrivial lower bound on $\rho(n)$.
Finally, we make the following conjecture based on our computer experiments.

\begin{conj}\label{conj:LR-sub}
Let $c^\la_{\mu,\nu} = \rC(n)$, where $\la\vdash n$.  Then $\mu \subseteq \nu$
or $\nu \subseteq \mu$.
\end{conj}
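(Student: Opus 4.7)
My plan is to deduce the conjecture from a strict refinement of the Lam--Postnikov--Pylyavskyy inequality (Theorem~\ref{t:LR-modularity}). Suppose $c^\la_{\mu,\nu} = \rC(n)$ and, aiming at a contradiction, assume that $\mu$ and $\nu$ are incomparable. Setting $\al = \mu\cap\nu$ and $\be = \mu\cup\nu$ we have $|\al|+|\be|=|\mu|+|\nu|=n$, so by Theorem~\ref{t:LR-modularity},
$$
\rC(n) \. = \. c^\la_{\mu,\nu} \. \le \. c^\la_{\al,\be} \. \le \. \rC(n),
$$
forcing equality in LPP. The conjecture is therefore equivalent to the assertion that LPP is \emph{strict} whenever $\mu$ and $\nu$ are incomparable, and this strict refinement is what I would try to prove.

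I would work in the Knutson--Tao hive model, where $c^\la_{\mu,\nu}$ is the number of integer points in a polytope $P(\la,\mu,\nu)$ whose boundary is read off from $(\la,\mu,\nu)$. The LPP argument in~\cite{LPP} can be recast as an explicit piecewise-linear injection $\Psi\colon P(\la,\mu,\nu)\hookrightarrow P(\la,\al,\be)$ built from octahedron-recurrence toggles along the boundary. Under the incomparability hypothesis, choose boxes $s\in \mu\setminus\nu$ and $t\in\nu\setminus\mu$. Starting from any hive $h\in P(\la,\al,\be)$ (which exists because $c^\la_{\al,\be}=\rC(n)\ge 1$), the plan is to perform a local deformation of $h$ near the boundary segments that distinguish $\al$ from $\mu$ and $\be$ from $\nu$ (at the corners determined by $s$ and $t$), producing a hive $h'\in P(\la,\al,\be)$ whose boundary behaviour at those positions is incompatible with being in the image of $\Psi$. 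A single such $h'$ gives the desired strict inequality.

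The main obstacle is verifying this non-surjectivity rigorously: the octahedron toggles in the LPP map propagate boundary information into the interior in a nontrivial way, and one has to check that the local perturbation at $(s,t)$ cannot be ``undone'' by $\Psi$ for any hive in $P(\la,\mu,\nu)$. As a backup I would try the Knutson--Tao puzzle model, where the incomparability of $\mu$ and $\nu$ corresponds to an out-of-order pair of boundary labels on one side of the triangle; a direct switching argument on puzzle pieces at those two labels should yield strictly more puzzles for $(\la,\al,\be)$ than for $(\la,\mu,\nu)$. A third, more analytic route would be to try to upgrade LPP to a genuine discrete log-concavity statement for the map $(\mu,\nu)\mapsto c^\la_{\mu,\nu}$ on the lattice of pairs of subpartitions of $\la$, from which the conjecture follows because $\rC(n)$ is attained at an extreme pair $(\al,\be)$ with $\al\subseteq\be$, i.e.\ at a point of the lattice where the two coordinates are comparable. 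In either approach, the delicate point is ruling out ``accidental'' equalities in LPP at the maximum, which is exactly where the conjectural nesting has to come from.
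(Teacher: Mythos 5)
This statement is an open conjecture in the paper, not a theorem: the authors state it on the basis of computer experiments and prove nothing about it, remarking only (in the final section) that a combinatorial proof of the Lam--Postnikov--Pylyavskyy inequality might eventually help establish it. So there is no proof in the paper to compare yours against, and what you have written is a research plan rather than a proof: you yourself flag that the non-surjectivity of the hive injection, the puzzle-switching argument, and the log-concavity upgrade are all unverified. As it stands, the proposal does not establish the conjecture.

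Moreover, the central reduction is flawed. Equality in LPP at a maximizer is indeed forced, but the conjecture is \emph{not} equivalent to the assertion that LPP is strict whenever $\mu$ and $\nu$ are incomparable, and that stronger assertion is simply false. For example, take $\la=(4,1)$, $\mu=(3)$, $\nu=(1,1)$, so $\al=\mu\cap\nu=(1)$ and $\be=\mu\cup\nu=(3,1)$: then $c^\la_{\mu,\nu}=1=c^\la_{\al,\be}$ (from $s_3\ts s_{11}=s_{41}+s_{311}$ and $s_1\ts s_{31}=s_{41}+s_{32}+s_{311}$), with $\mu,\nu$ incomparable and both coefficients nonzero. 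So any approach that tries to prove strictness of LPP for \emph{all} incomparable pairs must fail; one would need strictness only along maximizing triples, and your local-perturbation argument gives no handle on those, since nothing is known about which $\la$ attain $\rC(n)$. There is also a structural obstacle you underestimate: no combinatorial (injective) proof of the LPP inequality is known --- the original proof goes through Schur positivity of products of skew Schur functions --- so ``recasting the LPP argument as an explicit piecewise-linear injection of hives'' is itself an open problem, which is precisely the point the authors make when they say such a combinatorial interpretation ``would perhaps help establish'' the conjecture.
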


In other words, we are claiming that ``there exist'' clause in the
corollary can be replaced with ``for all''. For example, for $n=18$ the
maximum $\rC(18)=11$ is attained at  $\la=75321$, $\mu = 5321$
and $\nu = 421$ and its conjugates and transpositions:
$$
c^\la_{\mu,\nu} \. = \. c^\la_{\nu,\mu} \. = \. c^{\la'}_{\mu',\ts\nu'} \. = \. c^{\la'}_{\nu',\ts\mu'}  \. = \. 11\ts.
$$
Here $\la \sss \mu \sss \nu$.
}\end{rem}

\medskip

\subsection{Monotonicity and stability of the largest LR--coefficients}\label{ss:LR-mono}
We conclude this section with the following results suggested by
numerical values for $\rC(n,k)$ in the appendix~$\S$\ref{a:LR}.

\begin{prop}  \label{p:LR-mono}
The sequence \ts $\bigl\{\rC(n)\bigr\}$ \ts is nondecreasing.
Similarly, for every fixed $k\ge 1$, the sequence
\ts $\bigl\{\rC(n,k)\bigr\}$ \ts is bounded and nondecreasing.
In particular, for every fixed $k$, the sequences
$\bigl\{\rC(n,k)\bigr\}$ \ts stabilizes when $n$ is large enough.
\end{prop}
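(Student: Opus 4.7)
\medskip

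\noindent\textbf{Proof proposal.}
The plan is to obtain both the monotonicity and the boundedness by an elementary padding argument on LR--tableaux, and then deduce stability from ``monotone bounded integer sequences are eventually constant.''

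First, I would establish a \emph{padding lemma}: for any triple $(\la,\mu,\nu)$ with $|\la|=n$, $|\mu|=k$, $|\nu|=n-k$, define
$$
\la' \. := \. (\la_1,\ldots,\la_{\ell(\la)},1), \quad \mu' \. := \. \mu, \quad \nu' \. := \. (\nu_1,\ldots,\nu_{\ell(\nu)},1).
$$
Then I claim $c^{\la'}_{\mu',\nu'} \ge c^\la_{\mu,\nu}$. The map on LR--tableaux sends an LR--filling $T$ of shape $\la/\mu$ with content $\nu$ to the filling $T'$ of shape $\la'/\mu'$ obtained by adjoining a single new cell at position $(\ell(\la)+1,1)$ filled with the entry $\ell(\nu)+1$. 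The column-strictness holds because the cell above, if present in the skew shape, carries an entry $\le \ell(\nu)$; the Yamanouchi (lattice) condition on the reverse reading word holds because the new largest entry $\ell(\nu)+1$ sits at the very end of that word, and the only modification of earlier prefixes is a pure extension that does not change counts of $i \le \ell(\nu)$. This gives an injection, hence the inequality. Applying this construction to a maximizer of $\rC(n)$ yields $\rC(n+1)\ge \rC(n)$; applying it in the case where $|\mu|=k$ is fixed yields $\rC(n+1,k)\ge \rC(n,k)$.

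Next I would prove boundedness of $\{\rC(n,k)\}$ for fixed $k$. By the symmetry $c^\la_{\mu,\nu} = c^\la_{\nu,\mu}$, it suffices to bound the number of LR--tableaux of shape $\la/\nu$ with content $\mu$. For a fixed skew shape, such a tableau is determined by its reverse reading word, and that word is a lattice (Yamanouchi) word of content $\mu$. The classical bijection between lattice words of content $\mu$ and standard Young tableaux of shape $\mu$ gives
$$
c^\la_{\mu,\nu} \, \le \, f^\mu \, \le \, \sqrt{k!}\ts,
$$
where the second inequality is the Burnside identity \eqref{eq:Burnside}. Hence $\rC(n,k) \le \sqrt{k!}$ uniformly in~$n$.

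Finally, a nondecreasing integer sequence that is bounded above is eventually constant, which gives the stabilization of $\{\rC(n,k)\}$ for each fixed~$k$. I do not foresee a serious technical obstacle; the only step that needs care is the verification of column-strictness and the Yamanouchi condition after padding, and this is essentially bookkeeping on the reverse reading word. One could equivalently pad by adjoining a new column rather than a new row, or invoke $c^\la_{\mu,\nu}\le c^{\la\cup(1)}_{\mu,\nu\cup(1)}$ directly from the Pieri rule or from Theorem~\ref{t:LR-modularity}, but the explicit LR--tableau injection is the most transparent.
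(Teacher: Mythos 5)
Your proposal is correct and follows essentially the same route as the paper: an elementary padding of LR--tableaux for monotonicity (the paper adds a box to the first rows of $\la$ and $\nu$ via $c^\la_{\mu,\nu}\le c^{\la+1}_{\mu,\nu+1}$, you adjoin a new bottom row to $\la$ and a new part $1$ to $\nu$ --- both work), followed by a uniform upper bound for fixed $k$ and the observation that a bounded nondecreasing integer sequence stabilizes. The only difference is in the boundedness step, where you use $c^\la_{\mu,\nu}\le f^\mu\le\sqrt{k!}$ while the paper uses $c^\la_{\mu,\nu}\le f^{\la/\nu}/f^\mu\le k!$; your bound is in fact slightly sharper, and both are standard.
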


\begin{proof}
Recall the usual Young tableaux combinatorial interpretation
of the LR--coefficients, see e.g.~\cite{EC2}.
It is well known and easy to see that
$$
c^\la_{\mu,\nu} \. \le \. c^{\la+1}_{\mu,\nu+1}\ts,
$$
where \ts $(\la_1,\la_2,\la_3,\ldots) \ts +\ts 1 \ts :=\ts
(\la_1+1,\la_2,\la_3,\ldots)$.
By fixing $\mu$, we conclude that $\rC(n)\le \rC(n+1)$ and
$\rC(n,k)\le \rC(n,k+1)$, for all $n\ge 1$, $k\ge 0$.

To prove that $\rC(k,k), \rC(k+1,k), \rC(k+2,k), \ldots$ is bounded,
observe that
$$
c^{\la}_{\mu, \nu} \, \le \, \frac{f^{\la/\nu}}{f^{\mu}} \, \le \, k!\.,
$$
for all $\mu\vdash k$. This implies the result.
\end{proof}

\smallskip

\begin{thm}\label{t:LR-stab}
For every \ts $k\ge 0$ \ts and \ts $n \ge \binom{k+1}{2}$, we have
\. $\rC(n,k)  \ts = \ts \rD(k)$.
\end{thm}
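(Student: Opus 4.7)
The plan is to prove both inequalities $\rC(n,k) \le \rD(k)$ (which will hold for every $n$) and $\rC(n,k) \ge \rD(k)$ (which is where the hypothesis $n \ge \binom{k+1}{2}$ enters), and then combine them.

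For the upper bound, I would establish the stronger pointwise estimate $c^\la_{\mu,\nu} \le f^\mu$ for every triple with $\mu \vdash k$, independently of $n$. By the Littlewood--Richardson rule, $c^\la_{\mu,\nu}$ counts LR fillings $T$ of the skew shape $\la/\nu$ with content $\mu$, and each such $T$ determines a ballot (Yamanouchi) sequence $w(T)$ of content $\mu$ by its reverse reading word. The map $T \mapsto w(T)$ is injective, since the shape $\la/\nu$ together with $w(T)$ recovers $T$ by refilling boxes in reverse reading order. Ballot sequences of content $\mu$ are in standard bijection with $\SYT(\mu)$ (insert the index $i$ into row $w(T)_i$ at the leftmost empty column; the ballot condition guarantees the column-strict property), so there are exactly $f^\mu$ of them. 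Maximizing over $\la,\nu$ and over $\mu\vdash k$ gives $\rC(n,k) \le \rD(k)$.

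For the lower bound, I would use the staircase construction: take $\la := (k,k-1,\dots,2,1)$ and $\nu := (k-1,k-2,\dots,1,0)$, so that $|\la|=\binom{k+1}{2}$, $|\nu|=\binom{k}{2}$, and $\la/\nu$ is the \emph{anti-diagonal} consisting of the $k$ boxes $(i,k-i+1)$, $i = 1,\dots,k$. Because these boxes lie in pairwise distinct rows and pairwise distinct columns, the SSYT constraints are vacuous, and the injection from the preceding step becomes a bijection: every ballot sequence of content $\mu$ extends to a valid LR tableau. Consequently
$$c^\la_{\mu,\nu} \, = \, f^\mu \quad \text{for every} \ \mu \vdash k.$$
Choosing $\mu\vdash k$ with $f^\mu = \rD(k)$ then yields $\rC\bigl(\binom{k+1}{2},k\bigr) \ge \rD(k)$, and the monotonicity of $n\mapsto \rC(n,k)$ established in Proposition~\ref{p:LR-mono} propagates this inequality to every $n \ge \binom{k+1}{2}$.

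The only delicate point I anticipate is setting up cleanly the bijection between ballot sequences of content $\mu$ and $\SYT(\mu)$ and checking that it is compatible with the reverse reading word convention on the anti-diagonal; once this is in place, both bounds drop out without any appeal to jeu-de-taquin, RSK, or the hook-content formula. The tightness of $n = \binom{k+1}{2}$ then traces precisely to the size of the smallest staircase capable of housing the $k$ isolated boxes needed to kill the SSYT constraints.
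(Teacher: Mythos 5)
Your proof is correct, and its skeleton is the same as the paper's: the upper bound $c^\la_{\mu,\nu}\le f^\mu$ for $\mu\vdash k$, plus a staircase skew shape consisting of $k$ pairwise non-adjacent boxes for the lower bound. The differences are in the justifications. For the upper bound, the paper argues representation-theoretically: the skew module $\bss^{\la/\nu}$ is a submodule of the regular representation of $S_k$ (equivalently, $(e_1)^k - s_{\la/\nu}$ is Schur positive), so each multiplicity is bounded by the corresponding dimension; you instead inject LR fillings of $\la/\nu$ with content $\mu$ into ballot words of content $\mu$ via the reverse reading word and count the latter by $f^\mu$ through the standard bijection with $\SYT(\mu)$ --- a more elementary, purely combinatorial route that proves exactly the same inequality. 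For the lower bound, the paper pads the staircase, taking $\la=\de_{k+1}+(r)$ and $\nu=\de_k+(r)$ with $r=n-\binom{k+1}{2}$, so the construction works directly for every $n\ge\binom{k+1}{2}$ (the skew module of $k$ disjoint boxes being the regular representation); you use the minimal staircase at $n=\binom{k+1}{2}$, where the antidiagonal boxes make the ballot-word injection a bijection, and then propagate via the monotonicity of $n\mapsto\rC(n,k)$ from Proposition~\ref{p:LR-mono}. That appeal is legitimate (the proposition is proved independently and is also invoked in the paper's own proof), so both routes are sound; yours trades the representation-theoretic input for the LR rule, which makes the argument self-contained at the level of tableau combinatorics, while the paper's version explains more transparently \emph{why} the bound $\rD(k)$ appears, namely as the multiplicity bound in the regular representation.
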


\begin{proof}
By Proposition~\ref{p:LR-mono}, we have \ts
$ \rC(k,k) \le \rC(k+1,k) \le \rC(k+2,k) \le \ldots$ \ts
Recall that the $S_k$-module \ts $\bss^{\la/\nu}$ corresponding to
skew shape $\la/\nu$ is a submodule of the regular representation
(see e.g.~\cite{Sag}). Equivalently,
$$
(e_1)^n \. - \. s_{\la/\mu} \. \ge 0
$$
is \emph{Schur positive} (cf.~\cite{BBR,LPP}).  This implies
that \ts $c^\la_{\mu,\nu} \le f^\nu$. We conclude that
\ts $\rC(n,k) \le \rD(k)$ for all $n\ge k$.

Denote
$\de_k=(k-1,k-2,\ldots,1)$ and let $r=n-\binom{k+1}{2}$.
Define $\la=\de_{k+1}+(r)$, $\nu=\de_{k}+(r)$, and observe that
$\la/\nu$ is a disjoint union of $k$ squares.  Then \ts $\bss^{\la/\nu}$
is regular and \ts $\rC(n,k) = \rD(k)$ \ts
for all \ts $n\ge \binom{k+1}{2}$.
\end{proof}

\smallskip

\begin{rem}{\rm
These results on stability of the largest LR--coefficients are
modeled after similar results on stability of certain Kronecker
coefficients (see e.g.~\cite{BOR,Man,Sam}).  Of course, the latter
results are more technical since there is no combinatorial
interpretation for Kronecker coefficients.

By the theorem, \ts $\bigl\{\rC(n,k)\bigr\}$ \ts stabilizes at or
before $\binom{k+1}{2}$.  The data in the appendix for \ts $0\le k\le 6$,
suggests that it stabilizes precisely at \ts $\binom{k+1}{2}$.
We conjecture that this is true for all \ts $k\ge 0$.
}\end{rem}

\bigskip

\section{Large LR--coefficients imply large dimensions} \label{s:LR-imply-dim}

The goal of this section is obtain  Theorem~\ref{t:LR-upper-dim}, which is
the LR--analogue of the bound \ts $f^\la \ge g(\la,\mu,\nu)$ \ts
in~\eqref{eq:Kron-upper-groups}.

\subsection{Upper bounds}  To simplify presentation, here and
everywhere in this section we use (finite) sums over partitions
for which the corresponding terms are well-defined and nonzero.

\begin{lemma} Let $\mu\vdash k$ and $\nu\vdash (n-k)$. Then
$$
\sum_{\lambda\vdash n} \. \left( c^{\lambda}_{\mu, \nu}\right)^2 \, = \, \sum_{\alpha, \beta,
\gamma, \delta} \. c^{\mu}_{\alpha, \gamma} \. c^{\mu}_{\alpha, \delta} \.
c^{\nu}_{\beta, \gamma} \. c^{\nu}_{\beta, \delta}\ts.
$$
\end{lemma}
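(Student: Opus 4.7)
The plan is to interpret the identity through the Hopf algebra structure of the ring of symmetric functions $\Lambda$, exploiting the fact that the comultiplication is adjoint to multiplication under the Hall inner product. Writing the LR definition as $\langle s_\mu s_\nu, s_\lambda\rangle = c^\lambda_{\mu,\nu}$ and using Parseval (i.e.\ the orthonormality of $\{s_\lambda\}$), the left-hand side factors as
$$
\sum_{\lambda\vdash n} \bigl(c^\lambda_{\mu,\nu}\bigr)^2 \, = \, \langle s_\mu s_\nu,\. s_\mu s_\nu\rangle.
$$
So the task reduces to expanding this inner product into a quadruple sum of LR coefficients indexed by four intermediate partitions $\alpha,\beta,\gamma,\delta$.

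First I would invoke the Hopf algebra identity $\langle fg,\ts h\rangle = \langle f\otimes g,\ts \Delta(h)\rangle$ (which comes from Cauchy's identity, or equivalently from $s_\lambda(x,y)=\sum_{\alpha,\gamma} c^\lambda_{\alpha,\gamma}\. s_\alpha(x)s_\gamma(y)$) to obtain
$$
\langle s_\mu s_\nu,\. s_\mu s_\nu\rangle \. = \. \langle s_\mu\otimes s_\nu,\. \Delta(s_\mu s_\nu)\rangle.
$$
Next, since $\Delta$ is an algebra homomorphism (with respect to the componentwise product on $\Lambda\otimes\Lambda$), I expand
$$
\Delta(s_\mu s_\nu) \. = \. \Delta(s_\mu)\cdot \Delta(s_\nu) \. = \. \sum_{\alpha,\beta,\gamma,\delta} c^\mu_{\alpha,\gamma}\. c^\nu_{\beta,\delta}\. (s_\alpha s_\beta)\otimes (s_\gamma s_\delta).
$$
Pairing this against $s_\mu\otimes s_\nu$ and applying $\langle s_\mu,s_\alpha s_\beta\rangle = c^\mu_{\alpha,\beta}$ and $\langle s_\nu,s_\gamma s_\delta\rangle=c^\nu_{\gamma,\delta}$ then yields a quadruple sum that matches the right-hand side of the lemma, up to relabeling of the dummy indices and use of the symmetry $c^\eta_{\sigma,\tau}=c^\eta_{\tau,\sigma}$.

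The only real step to be careful about is the index bookkeeping: the raw output of the Hopf-algebra computation is $\sum c^\mu_{\alpha,\gamma}\ts c^\mu_{\alpha,\beta}\ts c^\nu_{\beta,\delta}\ts c^\nu_{\gamma,\delta}$, in which the two $c^\mu$-factors share the index $\alpha$, the two $c^\nu$-factors share one of $\{\beta,\gamma,\delta\}$, and the remaining two indices cross between the $\mu$- and $\nu$-groups. A short relabeling, together with the symmetry of the LR coefficients, rearranges it into the claimed form $c^\mu_{\alpha,\gamma}\ts c^\mu_{\alpha,\delta}\ts c^\nu_{\beta,\gamma}\ts c^\nu_{\beta,\delta}$. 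I do not expect any substantive obstacle: the Hopf-algebra adjunction does all the work, and the combinatorial content is simply that two skew decompositions of $s_\mu$ and two of $s_\nu$ can be glued in a consistent way to produce partitions $\lambda\vdash n$. An alternative route, which I would mention but not carry out in detail, is a direct bijective/RSK-style argument using the two-variable Cauchy identity and $s_\lambda(x,y)=\sum c^\lambda_{\alpha,\gamma} s_\alpha(x)s_\gamma(y)$ twice, which yields the same quadruple sum.
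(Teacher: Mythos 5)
Your proof is correct, and it reaches the identity by a somewhat different route than the paper. The paper's proof starts from the Sagan--Stanley \emph{skew} Cauchy identity \ts $\sum_{\lambda} s_{\lambda/\mu}(\mathbf{x})\ts s_{\lambda/\tau}(\mathbf{y}) = \prod_{i,j}(1-x_iy_j)^{-1}\sum_{\alpha} s_{\tau/\alpha}(\mathbf{x})\ts s_{\mu/\alpha}(\mathbf{y})$, expands everything into LR--coefficients, and extracts the coefficient of $s_\nu(\mathbf{x})\ts s_\kappa(\mathbf{y})$; this yields the polarized identity $\sum_\lambda c^\lambda_{\mu,\nu}\ts c^\lambda_{\tau,\kappa}=\sum_{\alpha,\beta,\gamma,\delta} c^\tau_{\alpha,\gamma}\ts c^\mu_{\alpha,\delta}\ts c^\nu_{\beta,\gamma}\ts c^\kappa_{\beta,\delta}$, which is then specialized at $\tau=\mu$, $\kappa=\nu$. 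You instead write the left side as $\langle s_\mu s_\nu, s_\mu s_\nu\rangle$ and use the self-duality of the Hopf algebra $\Lambda$ (adjointness of product and coproduct, plus the fact that $\Delta$ is an algebra map). The two arguments are equivalent in substance --- the adjunction $\langle fg,h\rangle=\langle f\otimes g,\Delta h\rangle$ together with $\Delta(s_\lambda)=\sum c^\lambda_{\mu,\nu}\ts s_\mu\otimes s_\nu$ is exactly what the skew Cauchy identity encodes --- but your version is more self-contained: it needs only the standard coproduct formula and multiplicativity of $\Delta$, rather than the skew Cauchy identity as a black box, at the cost of losing the more general polarized statement (which you would recover by pairing $s_\mu s_\nu$ against $s_\tau s_\kappa$). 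I checked your index bookkeeping: the raw output $\sum c^\mu_{\alpha,\gamma}\ts c^\mu_{\alpha,\beta}\ts c^\nu_{\gamma,\delta}\ts c^\nu_{\beta,\delta}$ does become the stated right-hand side after swapping the names $\beta\leftrightarrow\delta$ and using $c^\nu_{\sigma,\tau}=c^\nu_{\tau,\sigma}$, so that step goes through as you describe.
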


\begin{proof}   Let $\mu, \nu, \tau, \ka$ be fixed partitions.
Start with the \emph{skew Cauchy identity} for Schur functions:
$$
\sum_{\lambda} \. s_{\lambda/\mu}(\mathbf{x}) \. s_{\lambda/\tau}(\mathbf{y})
\, = \, \prod_{i=1}^\infty \. \prod_{j=1}^\infty \, \frac{1}{1 - x_i y_j} \,
\sum_{\alpha} \. s_{\tau/\alpha}(\mathbf{x}) \. s_{\mu/\alpha}(\mathbf{y})\ts.
$$
(see e.g.~\cite{SaS}).  We expand it as follows:
$$
\aligned
\sum_{\lambda, \nu, \tau} \, c^{\lambda}_{\mu, \nu} \. s_{\nu}(\mathbf{x})
\. c^{\lambda}_{\tau, \ka} \. s_{\ka}(\mathbf{y}) \, &= \, \sum_{\beta} \.
s_{\beta}(\mathbf{x}) \. s_{\beta}(\mathbf{y}) \,
\sum_{\alpha, \gamma, \delta} \, c^{\tau}_{\alpha, \gamma} \.
s_{\gamma}(\mathbf{x}) \. c^{\mu}_{\alpha, \delta}
s_{\delta}(\mathbf{y}) \\
&= \, \sum_{\alpha, \beta, \gamma, \delta, \nu, \tau} \, c^{\tau}_{\alpha, \gamma}
\. c^{\mu}_{\alpha, \delta} \. c^{\nu}_{\beta, \gamma} \. c^{\ka}_{\beta, \delta}
\. s_{\nu}(\mathbf{x}) \. s_{\ka}(\mathbf{y})\ts.
\endaligned
$$
The needed identity follows then by taking the coefficients at \ts
$s_{\nu}(\mathbf{x}) \. s_{\ka}(\mathbf{y})$ \ts from both sides:
$$
\sum_{\lambda} \. c^{\lambda}_{\mu, \nu} \. c^{\lambda}_{\tau, \ka} \, = \,
\sum_{\alpha, \beta, \gamma, \delta} \. c^{\tau}_{\alpha, \gamma} \.
c^{\mu}_{\alpha, \delta} \. c^{\nu}_{\beta, \gamma} \. c^{\ka}_{\beta, \delta}\ts.
$$
Taking \ts $\tau=\mu$ \ts and \ts $\ka = \nu$ \ts implies the result.
\end{proof}

\begin{lemma} \label{l:LR-ind-step}
Let $\la\vdash n$, $\mu\vdash k$ and $\nu\vdash (n-k)$.  Then:
$$
c^{\lambda}_{\mu, \nu} \,  \le \, \sqrt{n} \. p(k) \ts p(n-k) \. \rC(\mu) \ts \rC(\nu)\ts.
$$
\end{lemma}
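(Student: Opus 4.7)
The plan is to square the coefficient $c^\lambda_{\mu,\nu}$ and bootstrap from the identity in the preceding lemma. Since all LR coefficients are nonnegative,
\begin{equation*}
\bigl(c^\lambda_{\mu,\nu}\bigr)^2 \,\le\, \sum_{\lambda\vdash n}\bigl(c^\lambda_{\mu,\nu}\bigr)^2 \,=\, \sum_{\alpha,\beta,\gamma,\delta} c^\mu_{\alpha,\gamma}\, c^\mu_{\alpha,\delta}\, c^\nu_{\beta,\gamma}\, c^\nu_{\beta,\delta}.
\end{equation*}
The strategy is then to bound each of the four LR coefficients on the right by its natural maximum and to count the number of nonzero quadruples $(\alpha,\beta,\gamma,\delta)$.

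First I would use $c^\mu_{\alpha,\gamma}, c^\mu_{\alpha,\delta}\le \rC(\mu)$ and $c^\nu_{\beta,\gamma}, c^\nu_{\beta,\delta}\le \rC(\nu)$, each immediate from definition~\eqref{eq:def-LR-ref}, to pull a factor of $\rC(\mu)^2\,\rC(\nu)^2$ outside the sum. Next I would parameterize the nonzero quadruples: the conditions $c^\mu_{\alpha,\gamma}\neq 0$ and $c^\mu_{\alpha,\delta}\neq 0$ force $|\alpha|+|\gamma|=|\alpha|+|\delta|=k$, so $|\gamma|=|\delta|=:g$ and $|\alpha|=k-g$; the condition $c^\nu_{\beta,\gamma}\neq 0$ then forces $|\beta|=n-k-g$. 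Moreover, the containments $\gamma\subseteq\mu$ and $\gamma\subseteq\nu$ force $0\le g\le\min(k,n-k)$.

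For each admissible $g$, the number of nonzero quadruples is at most $p(k-g)\,p(g)^2\,p(n-k-g)$. Using monotonicity of the partition function together with $g\le\min(k,n-k)$, I would bound $p(k-g)\le p(k)$, $p(n-k-g)\le p(n-k)$, and $p(g)^2\le p(k)\,p(n-k)$, so each term is at most $p(k)^2\,p(n-k)^2$. Summing over the at most $\min(k,n-k)+1\le n$ admissible values of $g$ yields
\begin{equation*}
\bigl(c^\lambda_{\mu,\nu}\bigr)^2 \,\le\, n\cdot p(k)^2\, p(n-k)^2\, \rC(\mu)^2\, \rC(\nu)^2,
\end{equation*}
and the claimed inequality follows upon taking square roots.

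The argument is a clean counting bound once the previous identity is in hand, so I do not anticipate a substantive obstacle. The only subtle point is noticing that the containment constraints $\gamma\subseteq\mu$ and $\gamma\subseteq\nu$ restrict $g$ to $\min(k,n-k)$, which is exactly what allows the two factors of $p(g)$ to be paired with $p(k)$ and $p(n-k)$ respectively, keeping the final bound symmetric in $k$ and $n-k$.
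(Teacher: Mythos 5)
Your proof is correct and is essentially identical to the paper's: both square the coefficient, invoke the preceding lemma's identity, pull out $\rC(\mu)^2\ts\rC(\nu)^2$, and count the nonzero quadruples as $\sum_{g} p(k-g)\. p(n-k-g)\. p(g)^2 \le n\. p(k)^2\. p(n-k)^2$ before taking square roots. No further comment is needed.
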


\begin{proof} We have:
$$\aligned
\bigl(c^{\lambda}_{\mu, \nu}\bigr)^2 \, & \le \,
\sum_{\la\vdash n} \. \left( c^{\lambda}_{\mu, \nu}\right)^2 \, = \, \sum_{\alpha, \beta,
\gamma, \delta} \. c^{\mu}_{\alpha, \gamma} \.  c^{\mu}_{\alpha, \delta} \.
c^{\nu}_{\beta, \gamma} \. c^{\nu}_{\beta, \delta} \, \\
& \le \, \sum_{a=0}^{\min\{k,n-k\}} \.p(k-a) \. p(n-k-a) \. p(a)^2 \,  \rC(\mu)^2 \. \rC(\nu)^2\\
& \le \, n\. p(k)^2 \. p(n-k)^2 \,  \rC(\mu)^2 \. \rC(\nu)^2\ts,
\endaligned
$$
which implies the result.
\end{proof}

\smallskip

\begin{thm}\label{t:LR-upper-dim}
Let $\lambda \vdash n$, $|\mu|+|\nu|=n$ be fixed partitions. Then:
\begin{equation}\label{key}
f^{\lambda} \,\ge \,  e^{- u \ts n} \. \left(c^{\lambda}_{\mu, \nu} \right)^{\log_2 n}\ts,
\end{equation}
where $u>0$ is a universal constant.
\end{thm}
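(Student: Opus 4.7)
The plan is to iterate Lemma~\ref{l:LR-ind-step} in combination with the dimension identity \eqref{eq:LR-groups-def} along a binary decomposition tree of depth $T := \lceil \log_{2} n \rceil$. The crucial point is to extract one factor of $C := c^{\lambda}_{\mu,\nu}$ per level of the tree, producing $C^{T} \ge C^{\log_{2} n}$, rather than $C^{2^{T}}$ (from unguided iteration) or merely $C^{1}$ (from a single application).

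Concretely, I would build a binary tree $\mathcal{T}$ with root $\lambda$ having children $(\mu,\nu)$, and at every other internal node $\sigma$ with $|\sigma|\ge 2$ I would pick children $(\alpha_{\sigma}, \beta_{\sigma})$ achieving $\rC(\sigma) = c^{\sigma}_{\alpha_{\sigma}, \beta_{\sigma}}$ (taking the trivial decomposition $(\sigma, \emptyset)$ when $|\sigma| \le 1$). Let $L_{t}$ denote the set of $2^{t}$ nodes at depth $t$, so $\sum_{\sigma \in L_{t}} |\sigma| = n$. First, iterating Lemma~\ref{l:LR-ind-step} down the tree and using Cauchy--Schwarz $\sum_{\sigma \in L_{s}} \sqrt{|\sigma|} \le \sqrt{2^{s} n}$ together with $\sqrt{m}\,p(a)p(m-a) \le e^{O(\sqrt{m})}$, I would obtain
$$ C \,\le\, e^{O(\sqrt{2^{t} n})} \prod_{\sigma \in L_{t}} \rC(\sigma), \qquad 1 \le t \le T. $$
Second, iterating the single-term bound $f^{\sigma} \ge \rC(\sigma)\, f^{\alpha_{\sigma}} f^{\beta_{\sigma}}$ from \eqref{eq:LR-groups-def}, starting with $f^{\lambda} \ge C\, f^{\mu} f^{\nu}$, telescopes to
$$ f^{\lambda} \,\ge\, C \cdot \prod_{t=1}^{T-1} \prod_{\sigma \in L_{t}} \rC(\sigma) \cdot \prod_{\sigma \in L_{T}} f^{\sigma}. $$

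Plugging the first bound into the second and using $f^{\sigma} \ge 1$, each of the $T-1$ inner products contributes at least $C\, e^{-O(\sqrt{2^{t} n})}$, yielding
$$ f^{\lambda} \,\ge\, C^{T} \cdot \exp\!\left(-O\!\left(\textstyle\sum_{t=1}^{T-1} \sqrt{2^{t} n}\right)\right) \,=\, C^{T}\, e^{-O(\sqrt{2^{T} n})} \,=\, C^{T}\, e^{-O(n)}, $$
since $2^{T} \le 2n$. Assuming WLOG $C \ge 1$ (otherwise the theorem is trivial), $C^{T} \ge C^{\log_{2} n}$, and we conclude $f^{\lambda} \ge e^{-un}(c^{\lambda}_{\mu,\nu})^{\log_{2} n}$ for some universal $u > 0$.

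The main obstacle is the telescoping bookkeeping that interleaves the two inequalities: at every level $t$ of the dimension unrolling, one must invoke the iterated form of Lemma~\ref{l:LR-ind-step} to guarantee $\prod_{L_{t}} \rC(\sigma) \ge C\, e^{-\mathrm{err}_{t}}$ for a controllable error, rather than the trivial $\prod_{L_{t}} \rC(\sigma) \ge 1$. The geometric series of errors, $\sum_{t} \sqrt{2^{t} n} = O(\sqrt{2^{T} n}) = O(n)$, is then absorbed into the constant $u$ of the theorem.
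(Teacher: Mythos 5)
Your proposal is correct and follows essentially the same route as the paper's proof: a full binary tree of depth $\lceil \log_2 n\rceil$ rooted at $\lambda$ with children $(\mu,\nu)$ and maximizing LR--decompositions below, iterating $f^{\rho}\ge c^{\rho}_{\alpha,\beta}\ts f^{\alpha}f^{\beta}$ down the tree and the consequence $c^{\rho}_{\alpha,\beta}\le e^{a\sqrt{|\rho|}}\ts\rC(\alpha)\ts\rC(\beta)$ of Lemma~\ref{l:LR-ind-step} up the tree, with Cauchy--Schwarz giving $\sum_{\sigma\in L_t}\sqrt{|\sigma|}\le\sqrt{2^t n}$ and a geometric error sum of order $O(n)$. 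Your per-level extraction of one factor of $c^{\lambda}_{\mu,\nu}$ is just a repackaging of the paper's bottom-to-top propagation of the local inequalities, so the two arguments coincide in substance.
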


\begin{proof}
The idea is to iteratively apply the inequalities
\begin{equation}\label{eq:iq0}
f^{\lambda} \, \ge \, f^{\mu} f^{\nu} c^{\lambda}_{\mu, \nu}
\end{equation}
and the following corollary of Lemma~\ref{l:LR-ind-step}.
\begin{equation}\label{eq:iq}
c^{\lambda}_{\mu, \nu} \,  \le \, e^{a\ts \sqrt{n}} \. \rC(\mu) \ts \rC(\nu)\ts,
\end{equation}
for some universal constant $a>0$.

Consider a {\it full binary tree} \ts $T = T(\lambda, \mu, \nu)$ of depth $m = \lceil \log_2 n \rceil$
defined as follows.  Let the nodes be labeled by partitions according to the following rule:
\begin{itemize}
\item[(i)] \. $\lambda$ is the root with children $\mu$ and $\nu$;
\item[(ii)] \. every non-leaf node \ts $\rho$ \ts has children \. $\alpha, \beta$ \. so that \.
${c^{\rho}_{\alpha, \beta}} \ts = \ts \max_{\phi, \psi} {c^{\rho}_{\phi, \psi}}$ \\ (these could be empty partitions).
\end{itemize}
For each node $\rho \in T$, denote by $L(\rho)$ and $R(\rho)$ its children in~$T$,
and let $k_{\rho} := |\rho|$ be the size of the partition $\rho$. Iterating the
inequality~\eqref{eq:iq0} along~$T$ starting from the root $\lambda$ we obtain:
\begin{equation}\label{flt}
{f^{\lambda}} \, \ge \, {f^{L(\lambda)}} \ts {f^{R(\lambda)}} \. {c^{\lambda}_{L(\lambda), R(\lambda)}} \, \ge \,
\cdots \, \ge \, \prod_{\rho \in T} \. c^{\rho}_{L(\rho), R(\rho)}\ts.
\end{equation}
From  \eqref{eq:iq} we have the following local inequalities:
$$
c^{\alpha}_{L(\alpha), R(\alpha)} \. c^{\beta}_{L(\beta), R(\beta)} \, \ge \,
e^{-a \sqrt{k_{\rho}}} c^{\rho}_{\alpha, \beta}\,, \ \, \text{ where } \ \.
\rho \in T, \, \rho \vdash k_\rho, \, \alpha = L(\rho), \, \beta = R(\rho)\ts.
$$
Applying these inequalities in~\eqref{flt}  repeatedly, starting from the
bottom $(m-1)$-st level to top (the root at $0$-th level), we obtain:
\begin{equation}\label{f_inequality}
f^{\lambda} \, \ge  \, \prod_{\rho \in T} \. c^{\rho}_{L(\rho), R(\rho)} \, \ge \,
\left[\prod_{i = 0}^{m-1} \. \prod_{\rho \in T_i} e^{-a \sqrt{k_{\rho}} \. (m -1- i)}\right]
\. \left(c^{\lambda}_{\mu, \nu}\right)^m\ts,
\end{equation}
where $T_i$ denotes the $i$-th level of $T$. By construction, we have \ts
$\sum_{\rho \in T_i} k_{\rho} = n$ \ts for every level \ts $0 \le i \le m$,
with \ts $|T_i|= 2^i$. Using the Cauchy--Schwarz inequality we get
$$
\sum_{\rho \in T_i} \. \sqrt{k_{\rho}} \,\. \le \, \sqrt{2^i \. \sum_{\rho \in T_i} k_{\rho}}
\, = \, \sqrt{2^i } \. \sqrt{n}\..
$$
This implies
$$\prod_{\rho \in T_i} \. e^{-a\ts \sqrt{k_{\rho}}\. (m -1- i)} \, \ge \,
e^{-a \ts \sqrt{n} \. \sqrt{2^i}\. (m-1- i)}\..
$$
Note that
$$
\sum_{i = 0}^{m-1} \. \sqrt{2^i} \. (m-1 - i) \, = \, (3 +2 \sqrt{2}) (\sqrt{2^m} - 1)
\. -\. m(1+\sqrt{2}) \, = \, O(\sqrt{2^m}) \, = \, O(\sqrt{n}).
$$
From here and~\eqref{f_inequality}, we conclude that
$$
f^{\lambda} \, \ge \, \exp\left( {-a\ts \sqrt{n} \. \sum_{i = 0}^{m-1} \. \sqrt{2^i}\. (m -1- i)} \right)
\. \left(c^{\lambda}_{\mu, \nu}\right)^m \, \ge \, e^{-u \ts n} \. \bigl(c^{\lambda}_{\mu, \nu} \bigr)^{\log_2 n}\ts,
$$
for some universal constant $u \ge 0$, as desired.
\end{proof}

\medskip

\subsection{Proof of Theorem~\ref{t:LR-intro2}}  By assumption, $k_n=\lfloor n/2\rfloor$.
From the inequality \eqref{key} we conclude that
$$
\log f^{\lambda^{(n)}} \, \ge \,  (\log_2 n) \cdot \log \left[2^{n/2} \. e^{O(n/\log n)}\right] \. - \. O(n)
\, = \,  \frac{1}{2} \. n \log n \. - \. O(n).
$$
Using the inequality
$$
f^{\nu} \, \ge \, \frac{f^{\lambda} \ts c^{\lambda}_{\mu \nu}}{f^{\mu} \ts \binom{n}{k}}
$$
we obtain:
$$\aligned
\log f^{\nu^{(n)}} \, & \ge \, \log f^{\la^{(n)}} \. + \.
\log c^{\la^{(n)}}_{\mu^{(n)},\.\nu^{(n)}} \. - \. \log f^{\mu^{(n)}}
\. - \. \log \binom{n}{k_n}\\
 &\ge \, \frac{1}{2} \. n \log n \. - \. O(n) \. + \. \log \binom{n}{n/2} \. - \. O(n/\log n) \\
& \qquad \ - \. \frac{1}{2} \. (n/2) \ts \log \ts (n/2) \. + \. O(n) \. - \. \log \binom{n}{n/2} \\
 &\ge \, \frac{1}{2} \. (n/2) \log \ts (n/2) \. - \. O(n) \, = \, \log \sqrt{(n/2)!} \. - \. O(n).
\endaligned
$$
The inequality for $f^{\mu^{(n)}}$ follows similarly. \ $\sq$

\medskip

\begin{rem}\label{rmk:ineq}\rm
Stronger versions of the inequality \eqref{key} would give better bounds for
Theorem~\ref{t:LR-intro2}.  Notably, suppose~\eqref{key} holds with $u = 1/2$, then
$$
c^{\la^{(n)}}_{\mu^{(n)}, \nu^{(n)}} \, = \, 2^{n/2} \. e^{o(n/\log n)}
$$
imply
$$
f^{\la^{(n)}}  \ge \sqrt{n!}\, e^{o(n)}  \quad \text{and} \quad
f^{\mu^{(n)}}, \, f^{\nu^{(n)}} \, \ge \, \sqrt{(n/2)!}\, e^{o(n)}\..
$$
When $\{\la^{(n)}\}$, $\{\mu^{(n)}\}$ and $\{\nu^{(n)}\}$ have strongly
stable limit shapes, this would imply that all three partition sequences have
VKLS shape by Theorem~\ref{t:stable}.

\begin{conj}\label{conj:LR-upper-dim}
Let $\lambda \vdash n$ and $\mu,\nu\vdash n/2$ be fixed partitions.
Let
$$
\wt f^{\,\lambda} \. := \. \frac{f^{\lambda}}{\sqrt{n!}} \quad \text{and} \quad
\wt c^{\lambda}_{\mu, \nu} \. := \. \frac{c^{\, \lambda}_{\mu, \nu}}{\sqrt{\binom{n}{n/2}}}\..
$$
Then:
\begin{equation*}
\wt f^{\,\lambda} \,\ge \,  a \left(\wt c^{\,\lambda}_{\mu, \nu}\right)^{u\ts\log n}\ts,
\end{equation*}
where \ts $a,\ts u>0$ \ts are universal constants.
\end{conj}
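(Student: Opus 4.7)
The plan is to execute the binary-tree strategy of Theorem~\ref{t:LR-upper-dim} entirely in the normalized variables, but with the per-node slack reduced drastically compared to Lemma~\ref{l:LR-ind-step}. In normalized form the basic inequality $f^\lambda \ge f^\mu f^\nu c^\lambda_{\mu,\nu}$ becomes
$$\wt f^{\,\lambda} \, \ge \, \wt f^{\,\mu}\ts \wt f^{\,\nu}\ts \wt c^{\,\lambda}_{\mu,\nu}\ts,$$
for $|\mu|=|\nu|=n/2$, and iterating along a balanced binary tree $T$ of depth $m=\lceil\log_2 n\rceil$ with singleton leaves yields
$$\wt f^{\,\lambda} \, \ge \, \prod_{\rho\in T} \wt c^{\,\rho}_{L(\rho),\.R(\rho)}\ts,$$
where at each non-leaf node the children $L(\rho),R(\rho)$ are chosen to maximize $\wt c^{\,\rho}_{\alpha,\beta}$. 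The target $\wt f^{\,\lambda}\ge a\ts(\wt c^{\,\lambda}_{\mu,\nu})^{u\log n}$ thus reduces to showing that this product of $O(n)$ normalized maxima at small scales is at least the $O(\log n)$-th power of the normalized LR-coefficient at the root, up to a multiplicative constant.

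The main analytic obstacle is relating the small-scale normalized maxima $\wt c^{\,\rho}_{L(\rho),R(\rho)}$ back to $\wt c^{\,\lambda}_{\mu,\nu}$ at the root. A naive per-node inequality as in Lemma~\ref{l:LR-ind-step} introduces an $e^{O(\sqrt{k_\rho})}$ slack at each node, which accumulates to $e^{O(n)}$ over the tree and is catastrophic; a careful bookkeeping shows that even a conjectural polynomial-in-$n$ improvement of Lemma~\ref{l:LR-ind-step} would still produce a total slack of $n^{\Theta(n)}$ by direct node count. Any successful approach must therefore avoid per-node loss almost entirely and propagate information globally down the tree rather than locally at each step.

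The most promising route, in my view, is a concentration-of-branching argument. I would try to show that for a near-Plancherel partition $\mu$, the probability measure
$$(\alpha,\gamma) \ \longmapsto \ \frac{c^{\mu}_{\alpha,\gamma}\ts f^\alpha\ts f^\gamma\ts\binom{|\mu|}{|\alpha|}}{f^\mu\ts |\mu|!}$$
on ordered two-block decompositions of $\mu$ is sharply concentrated near a deterministic limit shape, in analogy with Biane's concentration theorem for Kronecker branching (Proposition~\ref{p:Kron-weak-Biane}). Given such concentration, the children selected at each node of $T$ would themselves be nearly Plancherel, and one could estimate each $\wt c^{\,\rho}_{L(\rho),R(\rho)}$ directly via its limit-shape profile rather than via a loss-accumulating chain of Cauchy--Schwarz estimates. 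The Knutson--Tao hive model (used in the proof of Theorem~\ref{t:LR-refined-row}) appears best suited to quantifying the sensitivity of the hive count to small perturbations of the boundary data and thereby extracting the required concentration, while an alternative route would refine the four-way identity from the proof of Lemma~\ref{l:LR-ind-step} by extracting an effective $L^2$--concentration from an $L^\infty$ smoothness property of $c^\mu_{\alpha,\gamma}$. The fundamental difficulty is that, unlike the Plancherel/VKLS setting, essentially nothing is known about the limit-shape behavior of the LR--branching measure; establishing such concentration seems to require substantially new analytic tools, and in my view this is precisely where the principal difficulty of Conjecture~\ref{conj:LR-upper-dim} lies.
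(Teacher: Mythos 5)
The statement you were asked to prove is Conjecture~\ref{conj:LR-upper-dim}: the paper explicitly presents it as open (``This is a strong conjecture'') and offers no proof, so there is no argument of the authors to compare yours against. Your proposal, as you yourself acknowledge, is a research programme rather than a proof, and the gap is genuine: the entire content of the conjecture is concentrated in the step you leave open. Your preliminary reductions are sound --- the normalized telescoping $\wt f^{\,\lambda}\ge\prod_{\rho\in T}\wt c^{\,\rho}_{L(\rho),R(\rho)}$ is correct (the factors $\sqrt{k_\rho!}$ and $\sqrt{\binom{k_\rho}{k_{L(\rho)}}}$ cancel exactly at each split), and your diagnosis that the $e^{O(\sqrt{k_\rho})}$ per-node slack of Lemma~\ref{l:LR-ind-step} accumulates to $e^{O(n)}$ is precisely the obstruction the authors flag in Remark~\ref{rmk:ineq}. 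But the claimed concentration of the LR--branching measure is nowhere established, and without it there is no way to bound the subtree factors from below in terms of $\wt c^{\,\lambda}_{\mu,\nu}$.

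One further caution about the route you propose: even if the concentration statement were available, it does not obviously escape the accumulation problem it is designed to avoid. Knowing that the children at each node are near-Plancherel would, via the mechanism of Theorems~\ref{t:LR-groups-exist1} and~\ref{t:LR-groups-exist}, give $\wt c^{\,\rho}_{L(\rho),R(\rho)}\ge e^{-c\sqrt{k_\rho}}$ at each node; summing $\sqrt{k_\rho}$ over the tree with $\sum_{\rho\in T_i}k_\rho=n$ and $|T_i|=2^i$ yields $\sum_i\sqrt{2^i n}=O(n)$, i.e.\ a total multiplicative loss of $e^{-O(n)}$ --- which reproduces Theorem~\ref{t:LR-upper-dim} but falls far short of the universal constant $a$ demanded by the conjecture. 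So any proof along these lines must extract per-node lower bounds of the form $1-o(1/k_\rho)$ in a suitable aggregate sense, not merely $e^{-O(\sqrt{k_\rho})}$; identifying a mechanism that delivers this is the real difficulty, and your proposal does not supply one.
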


This is a strong conjecture.  If true, then
$$
c^{\la^{(n)}}_{\mu^{(n)}, \nu^{(n)}} \, = \, 2^{n/2} e^{O(\sqrt{n})}
$$
imply
$$
f^{\la^{(n)}}  \ge \sqrt{n!}\, e^{O(\sqrt{n}\. \log n)}  \quad \text{and} \quad
f^{\mu^{(n)}}, \, f^{\nu^{(n)}} \, \ge \, \sqrt{(n/2)!}\, e^{O(\sqrt{n}\. \log n)}\..
$$

\end{rem}

\bigskip

\section{Standard Young tableaux of skew shape}\label{s:skew}

\subsection{Sum of squares}  \label{ss:skew-sum-squares}
We start our study of \ts $f^{\la/\mu}=\#\SYT(\la/\mu)$ \ts
with the following technical result.

\begin{lemma} \label{l:SYT-bounds}
For every \ts $0\le m \le n$, we have:
$$
\frac{(n-1)!}{(m-1)!} \, \le \, \sum_{\la \vdash n} \, \sum_{\mu \vdash m} \. \bigl(f^{\la/\mu}\bigr)^2
\, \le \, \frac{n!}{m!} \. p(m)\ts.
$$
\end{lemma}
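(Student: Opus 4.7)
The plan is to recognize $f^{\la/\mu}$ as a branching multiplicity and then collapse the double sum to a sum over cycle types via column orthogonality. By iterating the standard branching rule from $S_n$ down to $S_m$ one obtains the well-known identity
\begin{equation*}
\chi^\la\bigr|_{S_m} \, = \, \sum_{\mu \vdash m} \. f^{\la/\mu}\. \chi^\mu\ts,
\end{equation*}
so that $f^{\la/\mu} = \langle \chi^\la|_{S_m}, \chi^\mu\rangle_{S_m}$. Consequently,
\begin{equation*}
\sum_{\la\vdash n} \sum_{\mu\vdash m} \bigl(f^{\la/\mu}\bigr)^2 \, = \,
\sum_{\la\vdash n} \. \bigl\|\chi^\la\bigr|_{S_m}\bigr\|^2_{S_m} \, = \,
\frac{1}{m!} \sum_{\sigma \in S_m} \. \sum_{\la\vdash n} \. \chi^\la(\sigma)^2\ts.
\end{equation*}

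Next I would swap summations and apply column orthogonality for the character table of~$S_n$: for $\sigma\in S_n$ with cycle type $\beta\vdash n$, $\sum_{\la\vdash n}\chi^\la(\sigma)^2 = z_\beta$. If $\sigma\in S_m$ has cycle type $\al\vdash m$, its cycle type in $S_n$ is $\al\cup 1^{n-m}$ (appending $n-m$ fixed points). Grouping by $\al$, the centralizer product formula $z_\gamma=\prod_i i^{m_i(\gamma)}m_i(\gamma)!$ makes every factor except the one at $i=1$ cancel in the ratio, giving
\begin{equation*}
\frac{z_{\al\cup 1^{n-m}}}{z_\al} \, = \, \frac{(m_1(\al)+n-m)!}{m_1(\al)!}\.,
\end{equation*}
where the numerator is the centralizer size in $S_n$ and the denominator in $S_m$. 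Hence
\begin{equation*}
\sum_{\la\vdash n}\sum_{\mu\vdash m}\bigl(f^{\la/\mu}\bigr)^2 \, = \,
\sum_{\al\vdash m} \. \frac{(m_1(\al)+n-m)!}{m_1(\al)!}\..
\end{equation*}

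Finally, each summand equals $(m_1(\al)+1)(m_1(\al)+2)\cdots(m_1(\al)+n-m)$, a product of $(n-m)$ factors, each increasing in $m_1(\al)$; it therefore attains its maximum value $n!/m!$ at $\al=(1^m)$. Keeping only this single term yields
\[
\sum_{\la,\mu}\bigl(f^{\la/\mu}\bigr)^2 \, \ge \, \frac{n!}{m!} \, \ge \, \frac{(n-1)!}{(m-1)!}\,,
\]
while bounding each of the $p(m)$ terms by $n!/m!$ yields the upper bound $(n!/m!)\ts p(m)$.

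The ``hard'' part is really only bookkeeping: the character-theoretic identification is standard, and once the double sum has been swapped the rest is algebraic cancellation together with the trivial monotonicity of the rising factorial. The one small care needed is to keep the two centralizer conventions (in $S_m$ and in $S_n$) aligned, but since the extra fixed points only alter the multiplicity of $1$-cycles, the ratio simplifies immediately to $(m_1+n-m)!/m_1!$.
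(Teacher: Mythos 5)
Your proof is correct, and it takes a genuinely different route from the paper. The paper quotes Stanley's differential-poset identity for the generating function of $\sum_{\la,\mu}(f^{\la/\mu})^2$, extracts the coefficient of $q^m t^{n-m}$ to get an exact expression as a sum of binomials times $p(m-k)$, and then bounds that expression. You instead stay entirely inside character theory: the branching identity $\chi^\la|_{S_m}=\sum_\mu f^{\la/\mu}\chi^\mu$ plus column orthogonality in $S_n$ collapses the double sum to $\sum_{\al\vdash m} z_{\al\cup 1^{n-m}}/z_\al=\sum_{\al\vdash m}(m_1(\al)+n-m)!/m_1(\al)!$, after which the bounds are immediate from monotonicity of the rising factorial in $m_1(\al)$ (your centralizer-ratio computation and the count $m!/z_\al$ of elements of each cycle type are both right, and the real-valuedness of $S_n$-characters justifies writing $\chi^\la(\sigma)^2$). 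This is exactly the style of the paper's own proofs of Lemmas \ref{l:kron-sum-squares} and \ref{l:LR-squared-groups}, so your argument makes the skew-SYT lemma self-contained rather than dependent on an external identity; it also yields the slightly stronger lower bound $n!/m!$, from which the stated $(n-1)!/(m-1)!$ follows since $n\ge m$ (with the usual convention at $m=0$). What the paper's route buys in exchange is the explicit expansion in terms of $p(m-k)$, which sits naturally alongside its other partition-function estimates; your closed form, organized by the number of fixed points, is an equally usable and arguably more transparent exact answer.
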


\begin{proof} We start with the following Stanley's identity given in~\cite{Stanley-diff-posets}:
$$\sum_{m=0}^\infty \,\sum_{n=0}^\infty \, \sum_{\mu\vdash m}  \, \sum_{\la\vdash m+n} \, \bigl(f^{\la/\mu}\bigr)^2 \. \frac{q^m \ts t^n}{n!}
\, = \, \frac{1}{1-t/(1-q)} \. \prod_{i=1}^\infty \. \frac{1}{1-q^i}\..
$$
Taking the coefficient \ts $\bigl[q^m\ts t^{n-m}\bigr]$ \ts on both sides gives:
\begin{equation}\label{eq:sum-squares-skew-shape}
\sum_{\la \vdash n} \,\sum_{\mu \vdash m} \. \bigl(f^{\la/\mu}\bigr)^2 \,  = \,  (n-m)! \. \sum_{k=1}^m \. \binom{n-m+k-1}{k-1} \ts p(m-k).
\end{equation}
Bounding \ts $p(m-k)\leq p(m)$ \ts and summing the binomials give the upper bound:
$$
\sum_{\la \vdash n} \, \sum_{\mu \vdash m} \. \bigl(f^{\la/\mu}\bigr)^2  \,  \le \,  (n-m)! \.\ts \binom{n}{m} \. p(m)\ts.
$$
Finally, the term \ts $k=m$ \ts in the RHS of~\eqref{eq:sum-squares-skew-shape} gives the lower bound.
\end{proof}

\medskip

\subsection{Largest $\#\SYT$s of skew shape}\label{ss:skew-largest}
For \ts $0\le m \le n$, define
$$
\rF(m,n) \, := \, \max_{\la \vdash n} \. \max_{\mu \vdash m} \. f^{\la/\mu}\..
$$

\begin{cor}\label{c:SYT-asy}
Let $1 \le m \le n$, and $n\to \infty$.  Then:
$$
\sqrt{\frac{n!}{m!}} \,\. e^{-2\co\sqrt{n}\ts (1+o(1))}\,\le \, \rF(m,n)\,
\le \, \sqrt{\frac{n!}{m!}} \, \. e^{2\co\sqrt{n}\ts (1+o(1))}\ts.
$$
\end{cor}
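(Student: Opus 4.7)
The plan is to deduce both inequalities directly from Lemma~\ref{l:SYT-bounds}, combined with the Hardy--Ramanujan asymptotics $\log p(k) = 2\co \sqrt{k}\.(1+o(1))$, with $\co=\pi/\sqrt{6}$ as defined in~\eqref{eq:VK-const}.

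For the upper bound, I would simply observe that $\rF(m,n)^2$ is one of the summands in $\sum_{\la\vdash n}\sum_{\mu\vdash m}(f^{\la/\mu})^2$. By the upper bound in Lemma~\ref{l:SYT-bounds}, this gives
$$
\rF(m,n)^2 \, \le \, \frac{n!}{m!} \. p(m) \, = \, \frac{n!}{m!}\. e^{2\co\sqrt{m}\.(1+o(1))}.
$$
Taking square roots and using the trivial estimate $\sqrt{m} \le \sqrt{n}$ yields $\rF(m,n) \le \sqrt{n!/m!}\cdot e^{\co\sqrt{n}\.(1+o(1))}$, which is already stronger than the stated bound.

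For the lower bound, I would apply pigeonhole to the sum of squares. Since there are at most $p(n)\ts p(m)$ nonzero terms in the sum and the lower bound in Lemma~\ref{l:SYT-bounds} gives $\sum_{\la,\mu}(f^{\la/\mu})^2 \ge (n-1)!/(m-1)!$, the largest term must satisfy
$$
\rF(m,n)^2 \, \ge \, \frac{(n-1)!/(m-1)!}{p(n)\ts p(m)} \, = \, \frac{n!}{m!}\ts\cdot \ts\frac{m}{n\ts p(n)\ts p(m)}.
$$
Taking logarithms, the only terms subtracted from $\log(n!/m!)$ are
$$
\log p(n) \. + \. \log p(m) \. + \. \log(n/m) \, = \, 2\co\bigl(\sqrt{n}+\sqrt{m}\bigr)(1+o(1)) \. + \. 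O(\log n),
$$
and since $\sqrt{n}+\sqrt{m}\le 2\sqrt{n}$, dividing by $2$ gives $\log\rF(m,n) \ge \frac{1}{2}\log(n!/m!) - 2\co\sqrt{n}(1+o(1))$, as required.

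There is no substantive obstacle: both directions are one-line pigeonhole arguments once Lemma~\ref{l:SYT-bounds} is available, and the only care needed is tracking the constants in the exponent. The uniform factor of $2$ in front of $\co\sqrt{n}$ is there for convenience; the argument actually delivers $\co\sqrt{n}(1+o(1))$ in the upper bound and $\co(\sqrt{n}+\sqrt{m})(1+o(1))$ in the lower bound, which one could state more sharply if desired.
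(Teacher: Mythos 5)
Your proposal is correct and follows essentially the same route as the paper: the paper's proof also deduces the two-sided bound \eqref{eq:SYT-max}, namely \ $\sqrt{(n-1)!/(m-1)!}\ts/\sqrt{p(m)\ts p(n)} \le \rF(m,n) \le \sqrt{n!/m!}\ts\sqrt{p(m)}$, directly from Lemma~\ref{l:SYT-bounds} by the same ``largest term'' and pigeonhole observations, and then invokes the Hardy--Ramanujan asymptotics \ $\log p(n)\sim 2\co\sqrt{n}$. Your closing remark that the argument actually yields the sharper exponents \ $\co\sqrt{n}$ \ (upper) and \ $\co(\sqrt{n}+\sqrt{m})$ \ (lower) is also consistent with the paper's intermediate inequality.
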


\begin{proof} Let \ts $1\le m \le n$.  From Lemma~\ref{l:SYT-bounds}, we immediately have:
\begin{equation}\label{eq:SYT-max}
\sqrt{\frac{(n-1)!}{(m-1)!}} \,\frac{1}{\sqrt{p(m) \. p(n)}} \. \, \le \, \rF(m,n)\, \le \, \sqrt{\frac{n!}{m!}} \, \sqrt{p(m)}\..
\end{equation}
These bounds imply the result.
\end{proof}


\medskip

\subsection{Skew shapes with the largest $\#\SYT$s}\label{ss:skew-shapes-largest}

\begin{thm}\label{t:skew-Plancherel}
Let \ts $m_n := \lfloor\theta n\rfloor$ for some $0<\theta<1$, and
let $\{\la^{(n)}\vdash n\}$, $\{\mu^{(n)}\vdash m_n\}$ be two Plancherel
partitions sequences.  Then:
\begin{equation} \label{eq:SYT-thm}
f^{\la^{(n)}/\mu^{(n)}} \, = \, \sqrt{\frac{n!}{m_n!}} \,  e^{-O\bigl(n^{2/3}\ts\log n\bigr)}\..
\end{equation}
\end{thm}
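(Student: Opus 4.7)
The strategy is to analyze $f^{\lambda/\mu}$ via the Littlewood--Richardson expansion
\begin{equation*}
f^{\lambda/\mu} \, = \, \sum_{\nu \vdash n - m_n} c^{\lambda}_{\mu,\nu}\, f^\nu.
\end{equation*}
The upper bound $f^{\lambda/\mu} \le \sqrt{n!/m_n!}$ drops out immediately from Cauchy--Schwarz applied to this sum, combined with Lemma~\ref{l:LR-squared-groups-2} (which gives $\sum_\nu (c^{\lambda}_{\mu,\nu})^2 \le \binom{n}{m_n}$) and the Burnside identity~\eqref{eq:Burnside} (which gives $\sum_\nu (f^\nu)^2 = (n-m_n)!$). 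This already sharpens the upper half of Corollary~\ref{c:SYT-asy} and sits comfortably inside the claimed error.

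For the lower bound I would apply the Naruse hook--length inequality~\eqref{eq:NHLF-lower} and rewrite it using the usual hook--length formula~\eqref{eq:HLF} applied separately to $\lambda$ and $\mu$, obtaining
\begin{equation*}
f^{\lambda/\mu} \, \ge \, \frac{f^\lambda}{f^\mu\, \binom{n}{m_n}}\cdot R_{\lambda,\mu}, \qquad R_{\lambda,\mu} \, := \, \prod_{(i,j) \in \mu} \frac{h_{ij}(\lambda)}{h_{ij}(\mu)}.
\end{equation*}
Since $\lambda^{(n)}$, $\mu^{(n)}$ are Plancherel, $f^{\lambda} \ge \sqrt{n!}\,e^{-O(\sqrt n)}$ and $f^{\mu} \le \sqrt{m_n!}$, so the whole task reduces to
\begin{equation*}
R_{\lambda,\mu} \, \ge \, \binom{n}{m_n}\, e^{-O(n^{2/3}\log n)}.
\end{equation*}
To prove this I would interpret $\log R_{\lambda,\mu}$ as a Riemann sum: each factor equals $1+\big((\lambda_i-\mu_i)+(\lambda'_j-\mu'_j)\big)/h_{ij}(\mu)$, and on the VKLS scale both numerator and denominator are controlled by the profile $\vp$ from~\eqref{eq:vkls-shape}. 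The continuum limit of $\log R_{\lambda,\mu}$ is an explicit double integral involving the VKLS hook function $\hba$, which by direct computation agrees with $\log\binom{n}{m_n}$ to leading order in $n$.

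The main obstacles are twofold. First, the Naruse bound requires $\mu \subseteq \lambda$, which is not automatic for two independently chosen Plancherel sequences; I would arrange this by replacing $\mu$ by a nearby Plancherel $\widetilde\mu \subseteq \lambda$ of the same size, which, by Theorem~\ref{t:VKLS} and the $O(n^{1/6})$ VKLS-fluctuation window, can be done at the cost of an $O(n^{2/3})$ multiplicative correction in $f^{\lambda/\mu}$ (we alter roughly $\sqrt n$ boundary cells, each by $O(n^{1/6})$). Second, the Riemann-sum approximation for $\log R_{\lambda,\mu}$ is delicate near the boundary of $\mu$ where hook lengths are small: the Plancherel fluctuations of order $n^{1/6}$ accumulate across $\Theta(\sqrt n)$ boundary cells and, with an extra logarithmic factor from bounding $\log h_{ij}$, produce the stated $O(n^{2/3}\log n)$ error. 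Combining the two bounds yields the theorem; the same argument then extracts part~\WIII\ of Theorem~\ref{t:LR-intro1} by noting that a dominant term $c^{\lambda}_{\mu,\nu}f^\nu$ in the LR expansion must carry essentially the full order $\sqrt{n!/m_n!}$.
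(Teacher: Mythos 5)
Your upper bound is correct and in fact cleaner than the one the paper invokes: Cauchy--Schwarz applied to $f^{\lambda/\mu}=\sum_\nu c^\lambda_{\mu,\nu}f^\nu$, together with Lemma~\ref{l:LR-squared-groups-2} and the Burnside identity, gives $f^{\lambda/\mu}\le\sqrt{n!/m_n!}$ with no error term. The containment step is also the right move, though you should enlarge $\mu$ to $\widetilde\mu\supseteq\mu$ with $|\widetilde\mu|=m_n+\Theta(n^{2/3})$ rather than replace it by a $\widetilde\mu$ ``of the same size'': for two distinct partitions of equal size neither $f^{\lambda/\mu}\ge f^{\lambda/\widetilde\mu}$ nor its reverse need hold, whereas $\mu\subseteq\widetilde\mu\subseteq\lambda$ gives $f^{\lambda/\mu}\ge f^{\widetilde\mu/\mu}f^{\lambda/\widetilde\mu}\ge f^{\lambda/\widetilde\mu}$ at an affordable cost $\sqrt{m_n!/\widetilde m_n!}=e^{-O(n^{2/3}\log n)}$.

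The lower bound, however, has a fatal gap: the identity you assert ``by direct computation'' --- that the continuum limit of $\log R_{\lambda,\mu}$ agrees with $\log\binom{n}{m_n}$ to leading order --- is false, and the Naruse bound genuinely falls short by a factor $e^{-c(\theta)n}$ when $\theta$ is small. Indeed, for small fixed $\theta$ the cells of $\mu$ all lie in the corner of $\lambda$, where every $\lambda$-hook equals $(4+o_\theta(1))\sqrt n$ (here $o_\theta(1)\to0$ as $\theta\to0$). Hence
\begin{equation*}
\log \prod_{(i,j)\in\mu} h_{ij}(\lambda) \, = \, m\left(\tfrac12\log n+\log 4+o_\theta(1)\right),
\qquad
\log \prod_{(i,j)\in\mu} h_{ij}(\mu) \, = \, \tfrac12\log m!+O(\sqrt m),
\end{equation*}
the latter because $\mu$ is Plancherel. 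Subtracting, $\log R_{\lambda,\mu}=m\bigl(-\tfrac12\log\theta+\log4+\tfrac12+o_\theta(1)\bigr)$, while $\log\binom{n}{m}=m\bigl(-\log\theta+1+O(\theta)\bigr)$, so that $R_{\lambda,\mu}\big/\binom{n}{m}\approx(16\theta/e)^{m/2}$, which decays like $e^{-\Theta(n)}$ once $\theta<e/16$; as $\theta\to0$ the Naruse bound recovers only half of $\log\binom{n}{m}$. This is consistent with Theorem~\ref{t:stable}, which warns that the Naruse lower bound can be off by a factor as large as $2^{|\lambda/\mu|}$, so no Riemann-sum refinement can rescue the argument. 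The paper's proof avoids hook estimates entirely for the lower bound: after the same containment step it invokes Theorem~\ref{t:LR-groups-exist} to produce \emph{specific} Plancherel $\widetilde\mu,\nu$ with $c^{\lambda}_{\widetilde\mu,\nu}=\binom{n}{\widetilde m_n}^{1/2}e^{-O(\sqrt n)}$ and keeps the single term $f^{\lambda/\widetilde\mu}\ge c^{\lambda}_{\widetilde\mu,\nu}f^{\nu}$ of the LR expansion --- the very expansion you already wrote down for the upper bound.
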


\begin{proof}
By Theorem~\ref{t:VKLS}, we know that $\mu^{(n)}$ is
contained in $s\cdot \psi$ of area at most $m_n+dn^{2/3}$ for some
constant $d>0$.
Indeed, each of the $O(\sqrt{n})$ rows of $\mu^{(n)}$ differs from the
$\sqrt{n}\VK$ by at most $O(n^{1/6})$. Similarly, every
Plancherel $\wt\mu^{(n)}$ contains $t\cdot \psi$ of area at least
$m_n-dn^{2/3}$.

Let $\wt m_n = m_n + 2dn^{2/3}$.
By Theorem~\ref{t:LR-groups-exist}, for every Plancherel
partitions sequence there exists Plancherel sequences
$\{\wt\mu^{(n)}\vdash \wt m_n\}$ and $\{\nu^{(n)}\vdash n-\wt m_n\}$, s.t.
$$
c^{\la^{(n)}}_{\wt\mu^{(n)},\ts \nu^{(n)}} \, = \, \binom{n}{\wt m_n}^{1/2} \. e^{-O(\sqrt{n})}
\, = \, \binom{n}{m_n}^{1/2} \. e^{-O(n^{2/3}\log n)}.
$$
Note that
$$
f^{\nu^{(n)}} \, = \, \sqrt{(n-\wt m_n)!} \. e^{-O(\sqrt{n})}
\, = \, \sqrt{(n-m_n)!} \. e^{-O(n^{2/3}\log n)}.
$$
From above, $\mu_n \ssu t\cdot \psi \ssu \wt \mu_n$, where
$t=\sqrt{\theta n+ dn^{2/3}}$. Thus:
$$
f^{\la^{(n)}/\mu^{(n)}} \, \ge \, f^{\la^{(n)}/\wt\mu^{(n)}} \, \ge \,
c^{\la^{(n)}}_{\wt\mu^{(n)},\ts \nu^{(n)}} \cdot f^{\nu^{(n)}}
\, = \, \sqrt{\frac{n!}{m_n!}} \, e^{-O\bigl(n^{2/3}\ts\log n\bigr)}\..
$$
The upper bound follows from Corollary~\ref{c:SYT-asy}.  \end{proof}

\smallskip

We also have a partial converse:

\begin{thm}
Let \ts $m_n := \lfloor\theta n\rfloor$, where $\theta=1/2$.
Let $\{\la^{(n)}\vdash n\}$ and $\{\mu^{(n)}\vdash m_n\}$ be two
partitions sequences which satisfy
\begin{equation}
f^{\la^{(n)}/\mu^{(n)}} \, = \, \sqrt{\frac{n!}{(n/2)!}} \,  e^{O(n/\log n)}\..
\end{equation}
Then
$$
f^{\la^{(n)}} \, = \, \sqrt{(n/2)!} \,  e^{O(n)}\quad \ \text{and} \quad
f^{\mu^{(n)}} \, = \, \sqrt{(n/2)!} \,  e^{O(n)}\ts.
$$
 \end{thm}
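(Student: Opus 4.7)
The plan is to reduce this skew statement to the straight (LR) case already handled by Theorem~\ref{t:LR-intro2}. The bridge is the classical identity
\[
f^{\la/\mu} \, = \, \sum_{\nu \vdash n - m_n} \. c^{\la}_{\mu,\nu} \. f^{\nu}\ts,
\]
which lets us convert a large skew tableau count into a large LR--coefficient.

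First I would apply a pigeonhole argument to the sum above: some $\nu^{(n)} \vdash n - m_n$ must satisfy
\[
c^{\la^{(n)}}_{\mu^{(n)},\.\nu^{(n)}} \. f^{\nu^{(n)}} \, \ge \, \frac{f^{\la^{(n)}/\mu^{(n)}}}{p(n-m_n)}\ts.
\]
Using the crude bound $f^{\nu^{(n)}} \le \sqrt{(n-m_n)!} = \sqrt{(n/2)!}$, together with $p(n/2) = e^{O(\sqrt{n})}$, this yields
\[
c^{\la^{(n)}}_{\mu^{(n)},\.\nu^{(n)}} \, \ge \, \frac{f^{\la^{(n)}/\mu^{(n)}}}{\sqrt{(n/2)!}\,\ts p(n/2)} \, = \, \frac{\sqrt{n!}}{(n/2)!} \. e^{O(n/\log n)}
\. = \. \sqrt{\binom{n}{n/2}}\. e^{O(n/\log n)}\ts.
\]
Since $\sqrt{\binom{n}{n/2}} = 2^{n/2}/n^{\Theta(1)}$, the polynomial factor is absorbed into the error term, giving
\[
c^{\la^{(n)}}_{\mu^{(n)},\.\nu^{(n)}} \. = \. 2^{n/2}\. e^{O(n/\log n)}\ts,
\]
where the matching upper bound is the trivial $\rC(n,n/2) \le 2^{n/2}$ (Proposition~\ref{p:LR-groups-bounds} or Stanley's bound).

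With this lower bound in hand, the triple $(\la^{(n)}, \mu^{(n)}, \nu^{(n)})$ satisfies precisely the hypothesis~\eqref{eq:LR-intro-weak} of Theorem~\ref{t:LR-intro2}. Invoking that theorem yields
\[
f^{\la^{(n)}} \. = \. \sqrt{n!}\ts e^{-O(n)}, \quad f^{\mu^{(n)}} \. = \. \sqrt{(n/2)!}\ts e^{-O(n)}, \quad f^{\nu^{(n)}} \. = \. \sqrt{(n/2)!}\ts e^{-O(n)}\ts,
\]
which contains the two claimed estimates for $f^{\la^{(n)}}$ and $f^{\mu^{(n)}}$ (the estimate on $f^{\nu^{(n)}}$ is a free byproduct). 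The main obstacle is really just bookkeeping: one must verify that the various subexponential prefactors ($p(n/2)$, the polynomial gap between $\sqrt{\binom{n}{n/2}}$ and $2^{n/2}$, and any $o(n/\log n)$ losses) are all dominated by the $e^{O(n/\log n)}$ slack built into the hypothesis, so that Theorem~\ref{t:LR-intro2} can be applied without strengthening. There is no genuinely hard step beyond this reduction, since all the analytic content has been packaged into Theorem~\ref{t:LR-intro2}.
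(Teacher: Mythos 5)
Your proposal is correct and follows essentially the same route as the paper: apply the expansion $f^{\la/\mu}=\sum_\nu c^\la_{\mu,\nu}f^\nu$, pigeonhole together with $f^\nu\le\sqrt{(n/2)!}$ to extract a $\nu^{(n)}$ with $c^{\la^{(n)}}_{\mu^{(n)},\nu^{(n)}}=\binom{n}{n/2}^{1/2}e^{-O(n/\log n)}$, and then invoke Theorem~\ref{t:LR-intro2}. The bookkeeping of the subexponential factors is exactly as you describe, so nothing further is needed.
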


\begin{proof}
Recall that
$$f^{\la/\mu} \, = \, \sum_{\nu\vdash n-m_n} \. c^\la_{\mu,\nu}\. f^\nu
$$
(see e.g.~\cite{EC2}).  Thus
$$c^\la_{\mu,\nu} \, \ge \, \frac{f^{\la/\mu}}{p(n) \ts \sqrt{|\la/\mu|!}}
$$
for some $\nu\vdash |\la/\mu|$.  Letting $\la \gets \la^{(n)}$, $\mu \gets \mu^{(n)}$
and $\nu \gets \nu^{(n)}$ corresponding to maximal LR--coefficient as above,
we conclude that
$$c^{\la^{(n)}}_{\mu^{(n)},\.\nu^{(n)}}
\, = \, \binom{n}{n/2}^{1/2}  \. e^{-O(n/\log n)}\..
$$
By Theorem~\ref{t:LR-intro2}, this implies the result.
\end{proof}

\medskip

\subsection{Proof of Theorem~\ref{t:LR-intro1}}\label{ss:skew-third-case}
Parts~$\WI$ and~$\WII$ are proved in~$\S$\ref{ss:LR-max}.  For~$\WIII$,
start with Theorem~\ref{t:skew-Plancherel} which gives
$$
f^{\la^{(n)}/\mu^{(n)}} \, = \, \sqrt{\frac{n!}{m_n!}} \,  e^{-O\bigl(n^{2/3}\ts\log n\bigr)}\..
$$
Write
$$f^{\la^{(n)}/\mu^{(n)}}  \, = \, \sum_{\nu\vdash n-m_n} \. c^{\la^{(n)}}_{\mu^{(n)},\.\nu}\.\ts f^\nu\.,
$$
and let $\{\nu^{(n)}\}$ be the largest terms in the summation.  Since
$$
c^{\la^{(n)}}_{\mu^{(n)},\.\nu^{(n)}} \, \le \, \sqrt{\binom{n}{m_n}} \qquad \text{and}
\qquad f^\nu \, \le \, \sqrt{(n-m_n)!}\.,
$$
we can follow the proof of Theorem~\ref{t:LR-groups-exist1} to obtain the result.  \ $\sq$

\bigskip

\section{Final remarks and open problems}\label{s:fin-rem}

\subsection{}\label{ss:finrem-hist}
There is a large body of work on Littlewood--Richardson coefficients and
its generalizations, but there are very few explicit formulas and general
inequalities. Even when such inequalities exist, they are difficult and
not very sharp see e.g.~\cite{BBR,CL,LPP}. The exact evaluations are even
more rare and use ad hoc methods, see e.g.~\cite{CDW}.  A notable sharp
lower bound in~\cite{Nar} applies only to partitions with the smallest
part $\la_\ell \ge \ell^2$.

For Kronecker coefficients, there are even fewer bounds, so much that
even positivity is known in very few cases, see~\cite{PP} for an overview.
We should note a strongly related work by Biane~\cite{Bia}, who
discusses the limit shape of the random Kronecker and LR--coefficients
weighted by the dimensions $f^\la$.

Finally, despite the \emph{Feit determinant formula} for
$f^{\la/\mu}$, getting good (theoretical) asymptotic estimates
for the number of standard Young tableaux of skew shape remains
difficult.  We refer to a survey article~\cite{AR} for exact formulas
(cf.~\cite{MPP3}), and to~\cite{MPP} for the upper and lower bounds.

\subsection{}\label{ss:finrem-groups}
Most results in the paper
on Kronecker coefficients extend directly to all finite groups
with few conjugacy classes.  This is a byproduct of the lack of
available tools for Kronecker coefficients.  On the other hand,
the LR--coefficients correspond to induced coefficients with
certain additional properties.  Notably, the results of
Section~\ref{s:LR-imply-dim} do not extend to general groups,
and dimensions of skew shape representations discussed in
Section~\ref{s:skew} cannot even be formulated.

\subsection{}\label{ss:finrem-mod}  A direct combinatorial proof of
Theorem~\ref{t:LR-modularity} would give a combinatorial interpretation
of the difference of LR--coefficients as in the theorem.  If such a
combinatorial interpretation is found, it would perhaps help establish
Conjecture~\ref{conj:LR-sub}.

\subsection{}\label{ss:finrem-LR}
Let us highlight one minor result:
\begin{equation}\label{eq:LR-minor}
\hskip.5cm c^\la_{\mu,\nu} \, \le \, \sqrt{\binom{n}{k}}\, \quad \ \ \text{for all} \quad
\la\vdash n, \, \mu \vdash k, \, \nu \vdash n-k
\end{equation}
(see Lemma~\ref{l:LR-squared-groups}).  This bound is very natural,
asymptotically tight (see Theorem~\ref{t:LR-intro-asy} below),
and has an elementary proof on the level of a graduate exercise.
Yet, to our astonishment, it seems to be new.

The inequality~\eqref{eq:LR-minor} should be compared to
$$
c^\la_{\mu,\nu} \, \le \, f^\nu \quad \text{and} \quad c^\la_{\mu,\nu} \, \le \, \frac{f^{\la/\mu}}{f^\nu}\.,
$$
(see~$\S$\ref{ss:LR-mono}), which are cleaner but asymptotically much weaker.

Note also that our proof of~\eqref{eq:LR-minor} can be phrased as a direct
combinatorial argument, using
a double counting surjection.  Formally, the inequality
$$
f^\la \ts f^\mu \ts f^\nu \. (c^\la_{\mu,\nu})^2 \, \le f^\la \ts f^\mu \ts f^\nu \. \binom{n}{k}
$$
is a
combination of two surjections, both of which can be obtained by a jeu-de-taquin argument
using two different combinatorial interpretation of $c^\la_{\mu,\nu}$ in terms of
Young tableaux (cf.~\cite{Ker,vL,Zel}).  We leave the details to the reader.

\subsection{}\label{ss:finrem-table}  For small values of~$n$ and~$k$,
the numerical values of the lower and upper bounds given by
Proposition~\ref{p:LR-groups-bounds} are too far apart to be useful.
For example, $\rC(20,7)=11$ (see~$\S$\ref{a:LR}), while
the proposition gives
$$
0.28\, \approx \,  \frac{1}{\sqrt{p(7)\, p(13) \, p(20)}} \. \sqrt{\binom{20}{7}}\,\.
\, \le \, \rC(20,7)  \, \le \, \sqrt{\binom{20}{7}}
\, \approx \, 278.42\ts.
$$
Even the upper bound \ts $\rC(20,7) \le \rD(7) = 35\le \sqrt{7!}\approx 70.99$ \ts in Theorem~\ref{t:LR-stab}
is better in this case.

\subsection{}\label{ss:finrem-conj0}
By analogy with the upper bound in~\eqref{eq:VK}, we believe that $\rC(n)$ are rather
small when compared to $2^{n/2}$. We conjecture that
$$\rC(n) \, \le \, 2^{n/2} \. e^{-a\sqrt{n}} \quad \text{for some} \ \, a>0.
$$
This suggests that $\rC(n)$ is somewhere in between the lower and upper bound in
Proposition~\ref{p:LR-groups-bounds}.  Note that Conjecture~\ref{conj:LR-upper-dim} 
comes close: it implies the upper bound \ts $2^{n/2} \. e^{-a\sqrt{n}/\log n}$, 
which would also be interesting.  

\subsection{}\label{ss:finrem-conj1}
We conjecture that Theorem~\ref{t:LR-intro2} can be extended to
general $0<\theta <1$. For that, one can try to improve the lower bound in
Theorem~\ref{t:LR-upper-dim} (see also Remark~\ref{rmk:ineq}).
This problem has an especially elegant special case in terms of Biane's
approach~\cite{Bia}, which can be presented as follows.

Let $\{\mu^{(n)} \vdash n/2\}$ and $\{\nu^{(n)} \vdash n/2\}$ be partition
sequences with strongly stable shapes $\om$ and~$\pi$, respectively.
Suppose \ts $\area(\om)=\area(\pi)$.
Let \ts $\phi = \om \boxplus \pi$ \ts be the \emph{free convolution}
of the limit shape functions (see \cite{Bia} for definitions),
and let $\{\la^{(n)} \vdash n\}$ be partitions
sequence with strongly stable shape~$\phi$. It is easy to see that
Stanley's upper bound~$(\ast\ast)$ in Theorem~\ref{t:stanley-asy}
in terms of hook integrals (see~$\S$\ref{ss:def-limit}),
combined with Biane's concentration results gives
\begin{equation}\label{eqhook-int-ineq}
\Ups(\phi) \. \ge \. \frac{\Ups(\om) + \Ups(\pi)}{2}\.,
\end{equation}
with the equality for VKLS shapes.

\begin{conj}
Equality in~\eqref{eqhook-int-ineq} holds only for VKLS shapes.
\end{conj}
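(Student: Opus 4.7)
The plan is to reduce the rigidity in~\eqref{eqhook-int-ineq} to a rigidity statement for LR--coefficients, and then exploit the uniqueness of the VKLS curve as the area--$1$ stable shape with $\Ups=1/2$.

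First I would make the hypothesis of equality precise on the level of LR--coefficients. Substituting the stable-shape asymptotics of Theorem~\ref{t:stable} into the basic identity $\sum_\la c^\la_{\mu,\nu} f^\la = \binom{n}{n/2} f^\mu f^\nu$ yields, after a short Stirling computation, the trivial bound
$$
\log c^{\la^{(n)}}_{\mu^{(n)},\nu^{(n)}} \, \le \, \tfrac{n}{2}\ts\bigl(\log 2 \. + \. \Ups(\om) \. + \. \Ups(\pi) \. - \. 2\ts\Ups(\phi)\bigr) \. + \. o(n)\ts,
$$
which becomes an asymptotic equality once Biane's concentration~\cite{Bia} is invoked at the matching shapes $\phi = \om \boxplus \pi$. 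Combined with Stanley's bound $c^\la_{\mu,\nu}\le \sqrt{\binom{n}{n/2}}$ from Proposition~\ref{p:LR-groups-bounds}, this reproduces~\eqref{eqhook-int-ineq}, and equality forces
$$
c^{\la^{(n)}}_{\mu^{(n)},\nu^{(n)}} \, = \, \binom{n}{n/2}^{1/2}\. e^{-o(n)}\..
$$

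Next I would apply a ``near-maximal LR--coefficient implies near-maximal dimensions'' rigidity. Theorem~\ref{t:LR-intro2} gives only an $e^{-O(n)}$ loss, which is too weak, but a strengthening of Theorem~\ref{t:LR-upper-dim} of the type discussed in Remark~\ref{rmk:ineq} (equivalently, a quantitative form of Conjecture~\ref{conj:LR-upper-dim}) would translate this into
$$
f^{\la^{(n)}} \, = \, \sqrt{n!}\ts e^{-o(n)}\ts, \qquad
f^{\mu^{(n)}} , \, f^{\nu^{(n)}} \, = \, \sqrt{(n/2)!}\ts e^{-o(n)}\ts.
$$
Plugging these dimension asymptotics back into Theorem~\ref{t:stable} gives $\Ups(\om)=\Ups(\pi)=\Ups(\phi)=\tfrac{1}{2}$, and the uniqueness proposition in Section~\ref{ss:VKLS} then forces $\om=\pi=\phi=\vp$, as desired.

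The main obstacle is the rigidity step, which depends on an unproven strengthening of Theorem~\ref{t:LR-upper-dim}; indeed what is needed here is essentially the content of Conjecture~\ref{conj:LR-upper-dim}, together with a quantitative form of Biane's concentration accurate to $o(n/\log n)$ in the exponent. An appealing alternative is to bypass LR--coefficients entirely and work directly in free probability: up to the area normalization, $\Ups$ is Voiculescu's free entropy of the transition measure attached to the limit shape (cf.~\cite{Bia}), and~\eqref{eqhook-int-ineq} is the corresponding entropy power inequality for $\boxplus$. Rigidity for this inequality, with the semicircle law (the transition measure of $\vp$) as the unique extremizer, could plausibly be obtained from a free transportation-cost inequality or a monotonicity argument along the free Brownian motion interpolation, entirely avoiding the combinatorial detour.
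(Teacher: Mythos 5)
The statement you are addressing is an open conjecture: the paper offers no proof of it, and your proposal does not supply one either. Your own summary concedes the point: the crucial ``rigidity'' step (near-maximal LR--coefficient forces near-maximal dimensions, with sub-exponential loss) is exactly the content of Conjecture~\ref{conj:LR-upper-dim} / a strengthened Theorem~\ref{t:LR-upper-dim}, which is open. So what you have is a conditional reduction of one open problem to another, not a proof. The first half of your argument (Stirling plus Biane's concentration giving $c^{\la^{(n)}}_{\mu^{(n)},\nu^{(n)}}=\binom{n}{n/2}^{1/2}e^{-o(n)}$ under equality) is fine in outline, but it is essentially a re-derivation of how \eqref{eqhook-int-ineq} was obtained in \S\ref{ss:finrem-conj1} in the first place; the new content would have to come from the second half, and that is precisely where the argument is not available.

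There is also a quantitative mismatch inside the reduction itself. Equality in \eqref{eqhook-int-ineq} is a statement about limit shapes, so via Theorem~\ref{t:stable} and concentration it can control the LR--coefficient only up to a factor $e^{o(n)}$: the $o(n)$ error is intrinsic to the definition of strongly stable shape and to Theorem~\ref{t:stable}, and no amount of ``quantitative Biane'' extracts an $e^{-o(n/\log n)}$ statement from an equality of hook integrals alone. But the machinery of Remark~\ref{rmk:ineq} (and Theorem~\ref{t:LR-intro2}) needs input of precision $e^{-O(n/\log n)}$, and even Conjecture~\ref{conj:LR-upper-dim} applied with $\widetilde c=e^{-o(n)}$ only gives $f^{\la^{(n)}}\ge\sqrt{n!}\,e^{-o(n\log n)}$, which is too weak to invoke the uniqueness of the VKLS curve in \S\ref{ss:VKLS} (that requires $\sqrt{n!}\,e^{-o(n)}$). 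So the rigidity you actually need --- that $c=2^{n/2}e^{-o(n)}$ forces all three dimensions to be maximal up to $e^{-o(n)}$ --- is strictly stronger than anything conjectured in the paper, let alone proved; Theorem~\ref{t:LR-intro2} loses a full $e^{-O(n)}$ and still demands the stronger hypothesis. Your free-probability alternative (reading $\Ups$ as a free entropy and seeking the equality case of the resulting superadditivity for $\boxplus$ via transportation or free Brownian motion monotonicity) is an attractive direction, but as written it is a research plan, not an argument: the semicircle-rigidity step there is exactly the open point. As it stands, the conjecture remains open, and your proposal should be presented as a conditional approach rather than a proof.
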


\subsection{}\label{ss:finrem-conj2}  It would be interesting to
sharpen our bounds in Theorem~\ref{t:LR-refined-row} for LR--coefficients
with few rows.  Perhaps, one can then ask for about the shape of the largest
such LR--coefficients.  Although our proof is combinatorial, our bounds are
too far apart to be useful.  Note also that this is analogous to the
$1$-parametric family of limit shapes with largest dimension
and bound on the number of rows obtained by Logan and Shepp~\cite{LS}.

\subsection{}\label{ss:finrem-conj3}  As we mentioned earlier there are
no explicitly constructed series of partitions for which Kronecker
coefficients are $\exp\Omega(n)$.  In view of Theorem~\ref{t:kron-intro},
it would be especially interesting to obtain an explicit construction
giving a $\exp \Omega(n\log n)$.  We conjecture that the staircase shapes
in the Saxl conjecture give such examples, but as we mentioned
in~$\S$\ref{ss:Kron-saxl}, we are nowhere close to resolving this problem.

\vskip.76cm

\subsection*{Acknowledgements}
We are grateful to Richard Stanley for introducing us
to the area; his questions and results were the inspiration
for this whole paper.  We would like to thank Sami Assaf, Vadim Gorin,
Alejandro Morales, Leonid Petrov, Dan Romik, Dima Schlyaktenko
and Martin Tassy for helpful comments and insightful remarks.
The first and second authors were partially supported by the~NSF.

\newpage


\newpage
\appendix

\section{Table of the largest Littlewood--Richardson coefficients}\label{a:LR}

\
\vskip.3cm

\nin
Here is the sequence $\{\rC(n)\}$.

\vskip.3cm

\hskip-.7cm
\begin{tabular}{|c|ccccccccccccccccccccccc|}
 \hline
\text{$n$} & 1 & 2 & 3 & 4 & 5 &  6 & 7 & 8 &  9 & 10  & 11 & 12  & 13 & 14 & 15 & 16 & 17 & 18 &  19 & 20 & 21& 22 & 23 \\
\hline
\text{$\rC(n)$}  &  1 & 1 &  1 & 1  & 1   & 2  & 2  &  2 &  2 & 3
 & 3  &  4  & 4   & 5   &  6  & 8   &  9  &  11  &  12  &  18  &  24  &  32 & 35\\
 \hline
\end{tabular}

\vskip.95cm

\nin
Here is the table of $\rC(n,k)$, for \ts $0 \le k \le 20$ \ts
and \ts $1 \le n \le 23$. \ts Note that \ts $\rC(n,k) = \rC(n,n-k)$.

\vskip.3cm

\hskip-.7cm
\begin{tabular}{|c|rrrrrrrrrrrrrrrrrrrrr|}
 \hline

\text{$n \setminus k$} & \, 0& \, 1 & \, 2 & \, 3 & \, 4 & \, 5 &  6 & 7 & 8 &  9 &
10  & 11 & 12  & 13 & 14 & 15 & 16 & \!\!\! 17 & \!\!\! 18 & \!\!\! 19 & \!\!\! 20 \\
\hline
1  &  1 & 1 &   &   &    &   &   &   &   &   &   &    &    &    &    &    &    &    &    &    &    \\
2  &  1 & 1 & 1 &   &    &   &   &   &   &   &   &    &    &    &    &    &    &    &    &    &    \\
3  &  1 & 1 & 1 & 1 &    &   &   &   &   &   &   &    &    &    &    &    &    &    &    &    &    \\
4  &  1 & 1  & 1 & 1 & 1  &   &   &   &   &   &   &    &    &    &    &    &    &    &    &    &   \\
5  &  1 & 1  & 1 & 1 & 1  & 1  &   &   &   &   &   &    &    &    &    &    &    &    &    &    &  \\
6  &  1 & 1  & 1& 2 &  1  & 1  & 1  &   &   &   &   &    &    &    &    &    &    &    &    &    & \\
7  &  1 & 1  & 1 & 2 & 2   & 1  & 1  & 1  &   &   &   &    &    &    &    &    &    &    &    &     & \\
8  &  1 & 1  & 1 & 2 &  2  & 2  &1   &  1 & 1  &   &   &    &    &    &    &    &    &    &    &    & \\
9  &  1 & 1  & 1 & 2 &  2  & 2  & 2  & 1  & 1  & 1  &   &    &    &    &    &    &    &    &    &    & \\
10  &  1 & 1 & 1 & 2 &  3 & 2  & 3  & 3  &  1 & 1  & 1  &    &    &    &    &    &    &    &    &   & \\
11  &  1 & 1 & 1 & 2 & 3  & 3  & 3  & 3  & 2  & 1  & 1  & 1   &    &    &    &    &    &    &    &  & \\
12 &  1 & 1  & 1 & 2 & 3  & 3  & 4  & 3  & 3  & 2  & 1  &  1  & 1   &    &    &    &    &    &    &  & \\
13 &  1 & 1  & 1 & 2 & 3  & 3  & 4  & 4  & 3  & 3  & 2  & 1   & 1   & 1   &    &    &    &    &     &  & \\
14 &  1 & 1  & 1 & 2 & 3  & 3  & 4  & 5  & 4  & 3  & 3  & 3   & 1   & 1   & 1   &    &    &    &      &  & \\
15 &  1 & 1  & 1 & 2 & 3  & 6  & 6  & 5  & 5  & 6  & 6  & 3   & 2   & 1   & 1   &  1  &    &    &     &  & \\
16 &  1 & 1  & 1 & 2& 3 & 6 & 8  & 7  & 6  & 7  & 8  & 6  & 3   & 2   & 1   & 1   & 1   &    &    &   &   \\
17 &  1 & 1  & 1 & 2& 3 & 6 & 8  & 9  & 8 & 8  & 9  & 8  & 6  & 3   & 2   & 1   & 1   & 1   &    &   &\\
18 &  1 & 1  & 1 & 2& 3 & 6 & 8  & 11  & 10 & 9  & 10  & 11  & 8  &  6  & 3   & 2   & 1   & 1   & 1  & &  \\
19 &  1 & 1  & 1 & 2& 3 & 6 & 8  & 11  & 12 & 11  & 11 & 12  & 11 & 8  &  6  & 3   & 2   & 1   & 1   & 1   &  \\
20 &  1 & 1  & 1 & 2& 3 & 6 & 8  & 11  & 12 & 13  & 18 & 13 & 12  & 11 & 8  &  6  & 3   & 2   & 1   & 1   & 1  \\
21 &  1 & 1  & 1 & 2& 3 & 6 & 16 & 12 & 14  & 14 & 24 & 24  & 14 & 14 & 12 & 16  &  6  & 3   & 2   & 1   & 1   \\
22 &  1 & 1  & 1 & 2& 3 & 6 & 16 & 20 & 15  & 16 & 27 & 32 & 27  & 16 & 15 & 20 & 16  &  6  & 3   & 2   & 1    \\
23 &  1 & 1  & 1 & 2& 3 & 6 & 16 & 20 & 24  & 19 & 30 & 35 & 35 & 30  & 19 & 24 & 20 & 16  &  6  & 3   & 2     \\
 \hline
\end{tabular}

\vskip.95cm

\nin
For comparison, here is the sequence $\{\rD(n)\}$ taken from~\cite[\href{https://oeis.org/A003040}{A003040}]{OEIS},
to which the column values in the table above stabilize by Theorem~\ref{t:LR-stab}.

\vskip.3cm

\hskip-.7cm
\begin{tabular}{|c|cccccccccccccccc|}
 \hline
\text{$n$} & \. 1 & \. 2 & \. 3 & \. 4 \. & 5 &  6 & 7 & 8 &  9 & 10  & 11 & 12  & 13 & 14 & 15 & 16   \\
\hline
\text{$\rD(n)$}  &  \. 1 & \. 1 & \. 2 & \ts 3  & \ts 6   & 16  & 35  &  90 &  216 & 768
 & 2310  &  7700  & 21450   & 69498   &  292864  & 1153152    \\
 \hline
\end{tabular}

\end{document}